\def\LL{{\cal L}}
\def\ll{\mbox{l\hspace{ -.15em}L}}
\def\RR{\mathbb{R}}
\newcommand{\EE}{\mathbb{E}}
 \def\Scp#1#2{\big({#1}\,\big|\,{#2}\big)}
 \def\ppt#1{{\overset{\mbox{\large .\kern-1pt.}}{#1}}}
 \def\pppt#1{{\overset{\mbox{\large .\kern-1pt.\kern-1pt.}}{#1}}}
 \def\Pt#1{{\overset{\mbox{\Huge .}}{#1}}}
 \def\Ppt#1{{\overset{\mbox{\Huge .\kern-3pt.}}{#1}}}
 \def\Pppt#1{{\overset{\mbox{\Huge .\kern-3pt.\kern-3pt.}}{#1}}}
\def\bs{{\bf S}}
\def\fol{{\cal F}}
\def\vect#1{{\overset{\rightarrow}{#1}}}
\newcommand{\spa}{\mathrm{span}}
\newtheorem{theo}{Theorem}
\newtheorem{lemm}[theo]{Lemma}
\newtheorem{defi}[theo]{Definition}
\newtheorem{prop}[theo]{Proposition}
\newenvironment{rema}{{\noindent \bf Remark:}}{}
\newenvironment{demo}{{\noindent \bf Proof:}}{\hfill$\Box$}
\title{\bf Dynamical behavior of Darboux curves}
\author{Ronaldo Garcia, Rémi Langevin and Pawel Walczak}
\begin{document}
\maketitle

   \begin{abstract} In 1872 G. Darboux defined a family of curves on surfaces of $\mathbb R^3$ which are preserved by the action of the Möbius group and share many properties with geodesics. Here we   characterize these  curves under the view point of Lorentz geometry and prove some general properties and make them explicit them on simple surfaces, retrieving 
 results of Pell (1900) and Santaló (1941).    \end{abstract}
%\date{\today}

\vskip 5 mm

\section*{Introduction}

Our interest here is to understand a family of curves on a surface called {\it Darboux curves}. Almost as in the case of geodesics, through every point and direction which is not of principal curvature,  passes a unique Darboux curve. References about  these curves are \cite{Da}, \cite{Ri}, \cite{Co}, \cite{Pe}, \cite{En}, \cite{Sa}, \cite{sa2}, \cite{Se}.

To understand the dynamics of these curves we will   give a natural family of spheres along a  Darboux curve in the set of spheres of $\mathbb R^3$ or $ß^3$. Then we will study the angle drift of Darboux curves with respect to the foliations of the surface by principal curvature lines and by curves making a constant angle with the principal curvature foliations.

We will also study the Darboux curves on some special surfaces such as the envelopes of very particular one-parameter family of spheres, and on quadrics.

This  paper is organized as follows: In section \ref{sc:1} is described the space of spheres of $ß^3$ and the  correspondence with  a quadric $\Lambda\subset \mathbb R^5$ with the induced Lorentz metric of signature 1.
In section \ref{sc:5} a comparative study of quadrics and Dupin cyclides is studied considering a foliation by constant angle with the principal lines.
 In section
\ref{sc:2} is the introduced the space of osculating spheres to a surface and the Darboux curves are   characterized geometrically.  In section \ref{sc:3} is performed a  geometric study of Darboux curves on Dupin cyclides.
In section \ref{sc:4} is obtained the differential equation of Darboux curves  in a principal chart and this is applied in subsequent sections.

 In section \ref{sc:6} is defined a natural plane field in the space of spheres and the study of its integrability is carried out.
 In section \ref{sc:7}  a study of Darboux curves near regular ridge points leading to the  zigzag and beak to beak behaviors is carried out.
  In   \ref{sc:8} is considered the study of Darboux curves in general cylinders, cones and  surfaces of revolution, viewed as   canal surfaces.
In section \ref{sc:9} is carried out the global study of Darboux curves on quadrics.

\section{The set of spheres in $ß^3$}\label{sc:1}
%%-----------------------------------------
\begin{figure}[ht]
\begin{center}
\psfrag{ssx}{\Large $\Sigma$}
\psfrag{lxpur}{\Large $\ell_{\sigma}^{\bot}$}
\psfrag{xx}{\Large $\sigma$}
\psfrag{lx}{\Large $\ell_{\sigma}$ }
\psfrag{lamb}{ \Large $\Lambda$ }
\psfrag{light}{\Large $\LL ight$}
\psfrag{sinf}{\Large ${ß}_{\infty}^3$}
\includegraphics[scale=0.60]{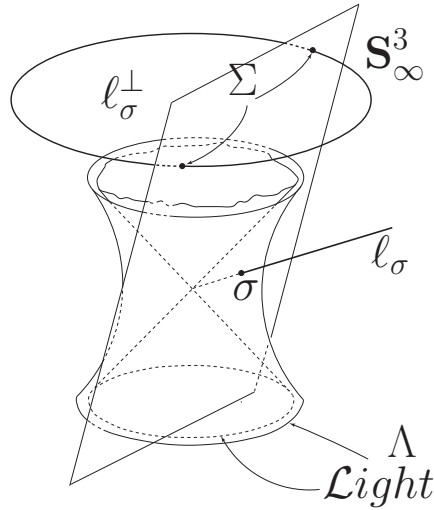}
\caption{${ß}^3_{\infty}$ and the correspondence between points of $\Lambda^4$ and
spheres. \label{fig1}}
\end{center}
\end{figure}

The {\it Lorentz quadratic form} ${\cal L}$ on $\RR^5$ and
the associated {\it Lorentz bilinear form} ${\cal L}(\cdot ,\cdot )$,  are defined by
${\cal L}(x_0,\cdots ,x_4)=-x_0^2+(x_1^2+\cdots +x_{4}^2)$ and
${\cal L}(u,v)=-u_0 v_0+ (u_1 v_1 +\cdots  + u_4v_4 ).$

The Euclidean space $\RR^{5}$ equipped with this pseudo-inner product ${\cal L}$
is called the {\it Lorentz space} and denoted by $\ll^5$.

The isotropy cone
${\cal L}i =\{v\in\RR^{5} \, |\, {\cal L}(v)=0\}$ of ${\cal L}$
is called the {\it light cone}.
Its non-zero vectors  are also called {\it light-like} vectors.
The {\it light cone}  divides the set of vectors $ v\in \ll^5, v\notin \{{\cal L} = 0 \}$ in two classes:

\vskip 1 mm
A vector $v$ in $\RR^{5}$ is called
 {\it space-like} if ${\cal L}(v)>0$ and {\it time-like} if ${\cal L}(v)<0$.

A straight  line is called space-like (or time-like) if it contains a space-like
 (or respectively, time-like) vector.

%The {\it light cone} $\LL ight$ is the cone given by the equation $\LL = 0$.

\vskip 1 mm
The points at infinity of the light cone in the upper half space
$\{x_{0} >0\}$ form a $3$-dimensional sphere. Let it be denoted by
${ß}_{\infty} ^{3}$.
Since it can be considered as the set of lines through the origin
in the light cone,
it is identified with the intersection ${ß}_1 ^{3}$ of
the upper half light cone and the hyperplane $\{x_{0}=1\}$,
which is given by
${ß}_1 ^{3}=\{(x_1, \cdots , x_{4} )\, |\, x_1^2+\cdots
+x_{4}^2-1=0\}$.

To each point $\sigma \in \Lambda^4=\{v\in\RR^{5} \, |\, {\cal L}(v)=1\}$   corresponds a sphere $\Sigma =
\sigma^{\bot}\cap {ß}_{\infty} ^{3}$ or $\Sigma = \sigma^{\bot}\cap
{ß}_1 ^{3}$ (see Figure \ref{fig1}).
Instead of finding the points of $ß^3$ ``at infinity", we can also
consider the section of the lightcone by a space-like affine hyperplane
$H_z$ tangent to the upper sheet of the hyperboloid ${\mathcal H}=\{
\LL
=-1\}$ at a point $z$. This intersection $\LL ight \cap H_z$ inherits
from
the Lorentz metric a metric of constant curvature $1$ (see \cite{H-J},
\cite{La-Wa1}, and Figure \ref{TzH}).
\begin{figure}[htbp]
\begin{center} \includegraphics[scale=0.6]{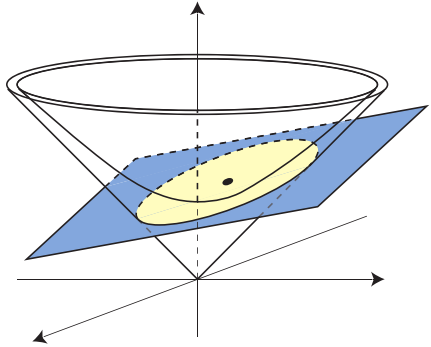}
%\caption{A tangent space to $\mathbb{H}^{n+1}$ cuts the light cone at a unit sphere
\caption{A tangent space to $\mathbb{H}^{5}$ cuts the light cone at a unit sphere
\label{TzH}}
\end{center}
\end{figure}
\begin{figure}[ht]
\begin{center}
\psfrag{mm}{$m$}
\psfrag{HH}{$\mathcal H$}
\psfrag{EE}{$\EE^3$}
\psfrag{LL}{$\LL ight$}
\psfrag{SS}{$ß^3$}
\psfrag{vv}{$v$}
\psfrag{Rgn}{$1/k_g\cdot \vect{n}$}
\psfrag{Sig}{$\Sigma$}
\psfrag{Rn}{$1/k\cdot \vect{n}$}
\includegraphics[scale=0.3]{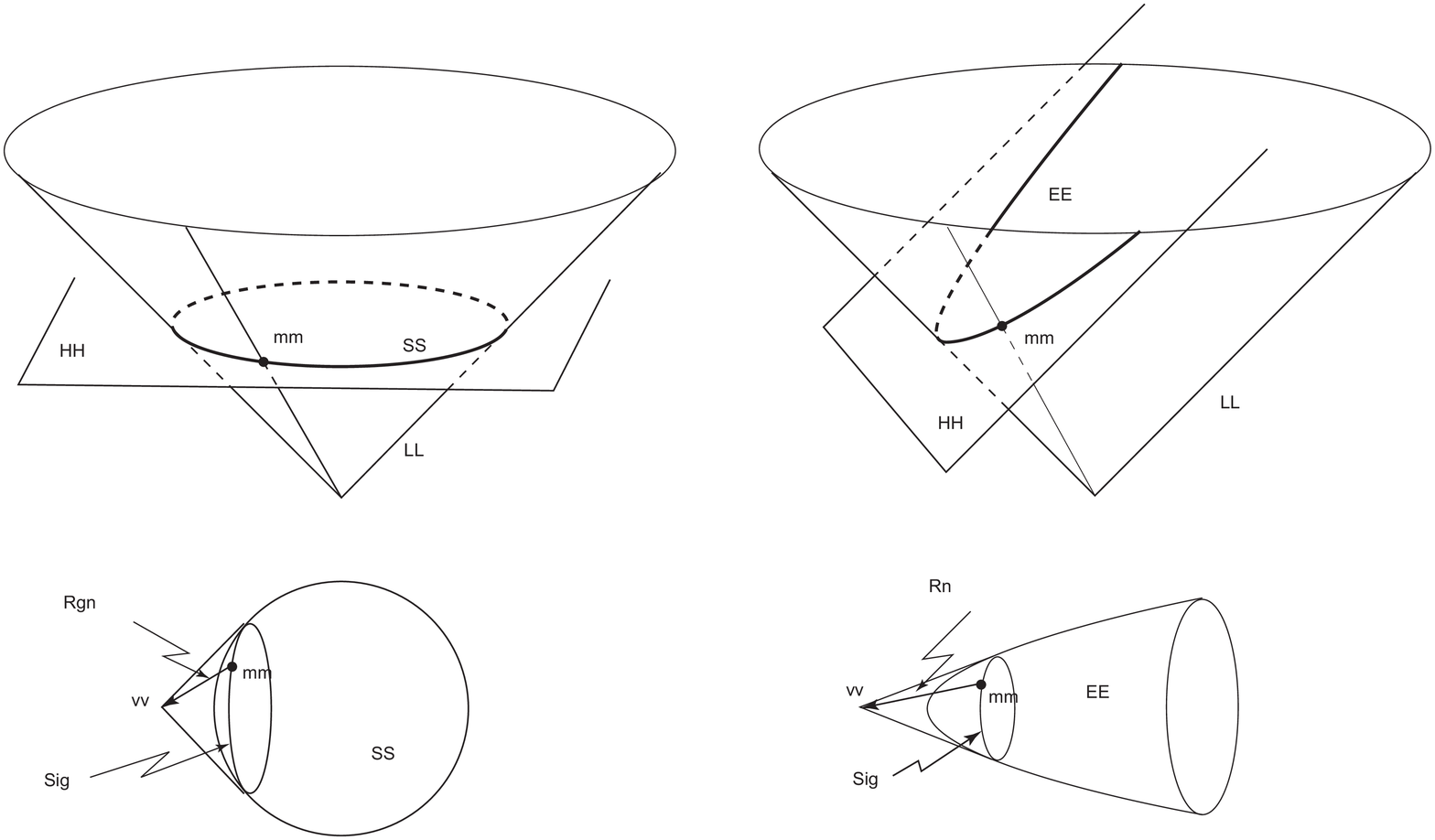}
\caption{Spherical and Euclidean models in the Minkowski space $\ll^5$ (up). The geodesic curvature $k_g$, picture in the affine
hyperplane  $\mathcal H$ (down). %The geodesic radius of curvature.
\label{rad_curv_geod}}
\end{center}
\end{figure}

Notice that the intersection of $\Lambda^4$ with a space-like plane $P$
containing the origin is a circle $\gamma \subset \Lambda^4$ of radius
one
in $P$. The points of this circle correspond to the spheres of a pencil
with base circle. The arc-length of a segment contained in $\gamma$ is equal   to the angle between the spheres corresponding to the extremities of the arc.

It is convenient to have a formula giving the point $\sigma \in
\Lambda^4$
in terms of the Riemannian geometry of the corresponding sphere $\Sigma
\subset ß^3 \subset \LL ight$ and a point $m$ on it. For that we need
to know also the unit vector $\vect{n}$ tangent to $ß^3$ and normal to
$\Sigma$ at $m$ and the geodesic curvature of $\Sigma$, that is the
geodesic curvature $k_g$ of any geodesic circle on $\Sigma$.
\begin{prop}\label{prop_sigma=k_g m+n}
The point $\sigma\in \Lambda^4$ corresponding to the sphere $\Sigma
\subset ß^3 \subset \LL ight$ is given by
\begin{equation}\label{sig_sig}
\sigma = k_g m+\vect n.
\end{equation}
\end{prop}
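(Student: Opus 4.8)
The plan is to verify the formula by a direct computation in the Lorentz model, checking that the vector $k_g m + \vect{n}$ lies on $\Lambda^4$ and that its orthogonal complement cuts out the correct sphere $\Sigma$. First I would set up coordinates adapted to the situation: work in the affine hyperplane $\mathcal{H} = \{\mathcal{L} = -1\}$ picture (Figure \ref{rad_curv_geod}), identifying $\mathbb{E}^3$ (or $\SS^3$) with the light cone section, so that $m \in \mathcal{L}i$ is a light-like vector representing the point, and $\vect{n}$ is the unit tangent to $\SS^3$ at $m$ that is normal to $\Sigma$ inside $\SS^3$. The key normalization facts I would record are $\mathcal{L}(m,m) = 0$, $\mathcal{L}(\vect{n},\vect{n}) = 1$, and the crucial incidence relation $\mathcal{L}(m, \vect{n}) = 0$ — this last one because $\vect{n}$, being tangent to the light cone section at $m$, is Lorentz-orthogonal to the radial light-like direction $m$.

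Next I would check $\sigma = k_g m + \vect{n} \in \Lambda^4$, i.e. $\mathcal{L}(\sigma, \sigma) = 1$. Expanding,
\begin{equation*}
\mathcal{L}(k_g m + \vect n, k_g m + \vect n) = k_g^2 \,\mathcal{L}(m,m) + 2 k_g\, \mathcal{L}(m, \vect n) + \mathcal{L}(\vect n, \vect n) = 0 + 0 + 1 = 1,
\end{equation*}
using the three normalizations above; so $\sigma$ automatically lands on $\Lambda^4$ regardless of the value of $k_g$. Then I would identify the sphere $\sigma^\perp \cap \SS^3_\infty$ (equivalently $\sigma^\perp \cap \SS^3_1$): a point $p$ on $\SS^3$ lies on this sphere iff $\mathcal{L}(p, \sigma) = 0$, i.e. $k_g\, \mathcal{L}(p, m) + \mathcal{L}(p, \vect n) = 0$. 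Here one uses the standard dictionary between the Lorentz inner product of two light-like vectors and the chordal/spherical distance between the corresponding points of $\SS^3$, together with the fact that $\mathcal{L}(p, \vect n)$ records the component of the displacement from $m$ in the normal direction $\vect n$. The resulting linear equation in $p$ describes precisely the geodesic sphere in $\SS^3$ through $m$, orthogonal to $\vect n$, whose geodesic curvature is $k_g$: when $k_g = 0$ it is the great sphere ($p \perp \vect n$), and varying $k_g$ sweeps out the pencil of spheres tangent to that great sphere at $m$, the geodesic curvature being read off as the coefficient. This pins down $\Sigma$ uniquely and matches the claim.

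The main obstacle is bookkeeping rather than conceptual: one must be careful that $k_g$ is genuinely the intrinsic geodesic curvature of a geodesic circle of $\Sigma$ measured inside $\SS^3$ (not, say, a curvature measured in an ambient chart), and that the sign conventions for $\vect n$ and for $k_g$ are compatible — reversing $\vect n$ must correspond to reversing the sign of $k_g$ so that $\sigma$ is well defined up to the sign ambiguity inherent in $\Lambda^4$ (both $\sigma$ and $-\sigma$ give the same unoriented sphere). I would handle this by computing one explicit model case — a geodesic sphere of known geodesic curvature in $\SS^3$, e.g. a small sphere in the Euclidean chart $\mathcal H$ where the picture in Figure \ref{rad_curv_geod} shows $1/k_g \cdot \vect n$ as the position of its center — and checking that $k_g m + \vect n$ produced by formula \eqref{sig_sig} is indeed the corresponding point of $\Lambda^4$; by Möbius invariance of the whole correspondence this single normalization fixes the constant and the sign for all spheres.
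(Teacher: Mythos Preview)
Your proposal is correct and follows a genuinely different route from the paper. The paper does not actually prove the proposition in the text: it cites \cite{H-J} and \cite{La_Oh} and only sketches a synthetic idea --- if $P$ is the affine hyperplane with $\Sigma = \SS^3\cap P$, then the line $P^{\perp}$ through the origin contains $\sigma\in\Lambda^4$ and also passes through the vertex (in the hyperplane~$\mathcal H$) of the cone tangent to $\SS^3$ along $\Sigma$; the coefficient $k_g$ is then read off from the picture in Figure~\ref{rad_curv_geod}. Your argument is instead a direct algebraic verification in the Lorentz model: $\mathcal L(\sigma,\sigma)=1$ is immediate from $\mathcal L(m,m)=0$, $\mathcal L(m,\vec n)=0$, $\mathcal L(\vec n,\vec n)=1$, and the identification of $\sigma^{\perp}\cap\SS^3$ with $\Sigma$ (including the fact that the coefficient is exactly the geodesic curvature) is pinned down by one explicit model computation together with M\"obius invariance. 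The paper's geometric sketch explains \emph{why} $\sigma$ must lie on the particular light-ray $\{km+\vec n : k\in\RR\}$ (it is the line $P^{\perp}$), but leaves the value of $k$ to the figure and to the references; your approach is self-contained and makes all normalizations explicit, at the cost of the single model-case check you correctly flag as the only real work.
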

\begin{rema}
A similar proposition can be stated for spheres in the Euclidean space
$\EE^3$ seen as a section of the light cone by an affine hyperplane parallel to an hyperplane tangent to the light cone.
\end{rema}

\medskip
The proof of Proposition \ref{prop_sigma=k_g m+n} can be found in
\cite{H-J} and \cite{La_Oh}.
The idea of the proof is shown on Figure \ref{rad_curv_geod}:
Let $\mathcal H$ be the affine hyperplane such that $ß^3 = \LL ight
\cap {\mathcal H}$,
let $P$ be the hyperplane such that $\Sigma = ß^3 \cap P$; the vertex
of the cone, contained in $\mathcal H$, tangent to $ß^3 $ along
$\Sigma$ is a point of the line $P^{\bot}$ which contains the point
$\sigma\in \Lambda^4$.
\begin{figure}[ht]
 \begin{center}
%\psfrag{lamb}{$\Lambda$}
\psfrag{siga}{$\sigma_{\alpha}$}
\psfrag{sig1}{$\sigma_{1}$}
\psfrag{sig2}{$\sigma_{2}$}
\psfrag{tang}[c][c][0.6][0]{\ $\begin{array}{c}\text{lightrays in}\ \Lambda\\ \text{corresponding to a}\\ \text{pencil of tangent spheres}\end{array}$}
\psfrag{atm}[c][c][0.6][0]{$\begin{array}{c}\text{lightray in}\ \Lambda\\ \text{corresponding to }\\ \text{oriented spheres }\\ \text{tangent to }$M$ \text{ at }$m$\end{array}$}
\includegraphics[scale=0.6]{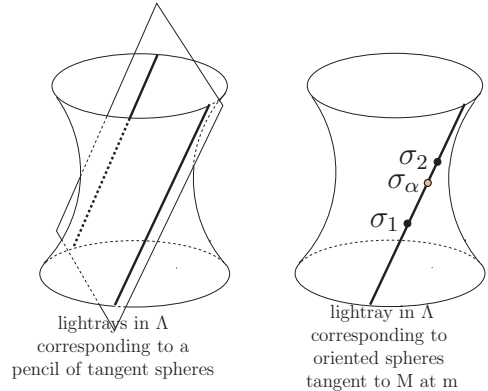}
\caption{Tangent spheres \label{tang_spheres}}
\end{center}
\end{figure}
From Proposition \ref{prop_sigma=k_g m+n}, we see that the points in $\Lambda^4$ corresponding to a pencil of spheres tangent to a surface $M$ at a point $m$ form two parallel light-rays (one for each choice of normal vector $n$). Let us now chose the normal vector $n$, and consider the spheres $\Sigma_k$ associated to the point $\sigma_k = km+n$.
\begin{figure}[ht]
\begin{center}
\psfrag{aa}[c][c][0.6][0]{Saddle contact}
 \psfrag{bb}[c][c][0.6][0]{$\begin{array}{c}\text{Contact with an}\\
 \text{osculating sphere}\end{array}$}
 \psfrag{cc}[c][c][0.6][0]{Center contact}
\includegraphics[scale=0.6]{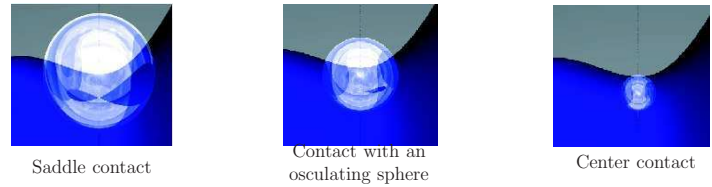}
\caption{Possible contacts of a sphere and a surface \label{contact}}
\end{center}
\end{figure}
%
%\clearpage
All the spheres $\Sigma_k$, but for at most two, have either a center contact or a saddle contact with $M$ (see Figure \ref{contact}).

The exceptional spheres correspond to the principal curvatures of $M$ at $m$, they are called osculating spheres.

When $k\notin [k_1 , k_2]$ the intersection of $\Sigma_k$ and $M$ near the origin reduces to a point, the origin.

When $k\in (k_1 , k_2)$ the intersection of $\Sigma_k$ and $M$ near the origin consists of two curves intersecting transversely at $m$.

When $k=k_1 $ or $k_2 $ the intersection of $\Sigma_k$ and $M$ near the origin is  a singular curve, in general of cuspidal type,  at $m$.

In fact one can prove the following

\begin{prop}\label{angle_intersec}
Let $k= k_1 cos^2 \alpha  + k_2 sin^2 \alpha$. Then the angle of the  tangents at $m$ to $\Sigma_k \cap M$ with the principal direction corresponding to $k_1$ is $±\alpha$.
\end{prop}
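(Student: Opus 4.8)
The plan is to work in a principal chart around the point $m$ and use a local second-order parametrization of both the surface $M$ and each sphere $\Sigma_k$, so that the intersection $\Sigma_k\cap M$ near $m$ is governed by a single quadratic form. First I would choose coordinates $(u,v,w)$ on $\EE^3$ (or equivalently on a space-like affine slice of the light cone) with $m$ at the origin, the tangent plane $T_mM$ equal to $\{w=0\}$, and the $u$- and $v$-axes along the principal directions corresponding to $k_1$ and $k_2$. Then $M$ is locally the graph $w=\tfrac12(k_1 u^2+k_2 v^2)+o(u^2+v^2)$, while $\Sigma_k$, being tangent to $M$ at $m$ with the same normal $\vect n$ and geodesic curvature $k$, is locally the graph $w=\tfrac12 k(u^2+v^2)+o(u^2+v^2)$. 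Subtracting, the intersection $\Sigma_k\cap M$ is, to leading order, the zero set of the quadratic form
\begin{equation}\label{eq:quadform}
Q(u,v)=(k_1-k)u^2+(k_2-k)v^2.
\end{equation}

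The second step is to read off the tangent directions to $\Sigma_k\cap M$ at $m$ from \eqref{eq:quadform}. When $k\in(k_1,k_2)$ the coefficients $k_1-k$ and $k_2-k$ have opposite signs, so $Q$ factors into two real linear forms and its zero set consists of two lines through the origin; by the implicit-function/blow-up argument these are exactly the tangent lines to the two branches of $\Sigma_k\cap M$. A direction $(\cos\theta,\sin\theta)$ lies on this zero set precisely when $(k_1-k)\cos^2\theta+(k_2-k)\sin^2\theta=0$, i.e. $k=k_1\cos^2\theta+k_2\sin^2\theta$. Comparing with the hypothesis $k=k_1\cos^2\alpha+k_2\sin^2\alpha$ gives $(k_2-k_1)(\sin^2\theta-\sin^2\alpha)=0$, hence $\sin^2\theta=\sin^2\alpha$ (using $k_1\neq k_2$, which holds since we are away from umbilic/principal situations), so $\theta\equiv\pm\alpha$. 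Since the angle with the $k_1$-principal direction is measured by $\theta$, the two tangent directions make angles $\pm\alpha$ with it, which is the claim.

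I would include a short justification that the quadratic part genuinely controls the tangent cone: after the rescaling $(u,v)=t(\cos\theta,\sin\theta)$ the equation $w\circ M-w\circ\Sigma_k=0$ becomes $t^2\bigl(Q(\cos\theta,\sin\theta)+o(1)\bigr)=0$ as $t\to0$, so the limiting directions of points of $\Sigma_k\cap M\setminus\{m\}$ are exactly the zeros of $Q$ on the unit circle; this is the standard Morse-lemma / quasi-homogeneous blow-up remark already implicit in the discussion preceding Proposition \ref{angle_intersec}. One can alternatively phrase everything intrinsically: the difference of the two second fundamental forms at $m$, one for $M$ and one for $\Sigma_k$ (both computed with the common unit normal $\vect n$), is the quadratic form $\mathrm{I\!I}_M-k\,\mathrm{I}$, and $\Sigma_k\cap M$ is tangent to its null cone.

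The main obstacle is essentially bookkeeping rather than conceptual: one must make sure the sign conventions are consistent — the orientation of $\vect n$, the sign of $k_g$ in Proposition \ref{prop_sigma=k_g m+n}, and the convention that a sphere of geodesic curvature $k$ in $\SS^3$ (or radius $1/k$ in $\EE^3$) really has second fundamental form $k\,\mathrm{I}$ with respect to $\vect n$ — so that the coefficients in \eqref{eq:quadform} come out as $k_i-k$ and the interval of $k$ producing a genuine (two-branch, saddle) contact is exactly $(k_1,k_2)$, matching the earlier trichotomy. Once the conventions are pinned down, the angle computation is immediate; the only subtlety worth a sentence is that the formula $k=k_1\cos^2\alpha+k_2\sin^2\alpha$ is precisely Euler's formula for the normal curvature in the direction making angle $\alpha$ with the first principal direction, which is why the osculating-sphere parameter and the angle are tied together in this way.
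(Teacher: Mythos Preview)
Your argument is correct. The paper actually states Proposition~\ref{angle_intersec} without proof (it is introduced by ``In fact one can prove the following'' and then the text moves on to consequences), so there is no proof to compare against; your local-graph computation --- writing both $M$ and $\Sigma_k$ as graphs over $T_mM$ in principal coordinates and reading the tangent cone of $\Sigma_k\cap M$ from the indefinite form $(k_1-k)u^2+(k_2-k)v^2$, then recognizing its null directions via Euler's formula $k=k_1\cos^2\theta+k_2\sin^2\theta$ --- is precisely the standard argument the authors are taking for granted.
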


In general, the intersection of an osculating sphere with $M$ admits a cuspidal point at $m$, the tangent to the cusp is then the principal direction associated to the curvature $k_i$. In any case, when $k\in [k_1,k_2]$ goes to $k_i$, the two tangent directions at $m$ to $\Sigma_k \cap M$ converge to the principal direction associated to the curvature $k_i$. From that we can see that lines of curvature as osculating spheres are conformally defined.
%%%%
\section{Foliations making a constant angle with respect to Principal Foliations}\label{sc:5}
%%%%%%%%%%%%%%%%%%%%%%%%%%%%
Consider a surface $ M$ with principal foliations ${\mathcal P}_1$ and ${\mathcal P}_2$ and umbilic set $\mathcal U$. The triple  ${\mathcal P}=({\mathcal P}_1, {\mathcal P}_2,  \mathcal U)$ will be referred as the {\ em principal configuration} of the surface.

\begin{defi}\label{fol_alpha}
For each angle $\alpha\in (-\pi/2,\pi/2)$ we can consider the  foliations  $\fol_{\alpha}^{+} $ and  $\fol_{\alpha}^{-} $ such the leaves of this foliation are the curves making a constant angle $±\alpha$ with the leaves of the principal foliation ${\mathcal P}_1$. We will write    $\fol_{\alpha}= \{ \fol^+_{\alpha}, \fol^-_{\alpha}\}. $

In other words, the normal curvature of a leaf  of $\fol_{\alpha}$ is precisely $k_n(\alpha)=k_1\cos^2\alpha+k_2\sin^2\alpha$.
\end{defi}
\subsection{Foliations $\fol_{\alpha}$ on Dupin cyclides}
%%---------------------------------------------------
%
 \begin{figure}[htb]
\begin{center}
%\psfrag{}{}
\includegraphics[scale=0.4]{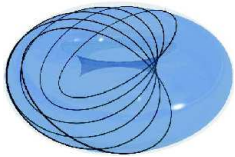}
\caption{Foliation of a torus of revolution by Villarceau circles\label{Villarceau}}
\end{center}
\end{figure} 
Dupin cyclides are very special: they are surfaces which are in two different ways envelopes of one-parameter families of spheres (see \cite{Da3}). This implies that the corresponding curves are circles or hyperbolas in $\Lambda^4$, intersection of $\Lambda^4$ with an affine plane (see \cite{La-Wa1}).

There are three types of Dupin cyclides. One can chose a nice representant of each class: 

- A) The boundary of a tubular neighbourhood of a geodesic of $ß^3$

- B) A cylinder of revolution in $\RR^3$

- C) A cone of revolution in $\RR^3$.

Then, in cases A) and B) the foliations $\fol_{\alpha}^+$ and $\fol_{\alpha}^-$ are totally geodesic foliations.

In the case B), four foliations are foliation by circles: the two foliation by characteristic circles $\fol_0$ and $\fol_{\pi/2}$, and two others: the foliations by Villarceau circles.

In the case C), we have to develop the cone on a plane --this is a local isometry out of the origin-- to see that a foliation of the plane by curves making a constant angle with rays is a foliation by logarithmic spirals. The picture on the cone is obtained by rolling the plane foliation back on the cone.

\subsection{Foliations $\fol_{\alpha}$ on quadrics}
%%%----------------------------------------------
\begin{prop}\label{prop:54ma}
Consider an ellipsoid $\mathbb E_{a,b,c}=\{(x,y,z):\; \frac{x^2}{a }+\frac{y^2}{b }+\frac{z^2}{c }=1\}$  with    $a>b>c>0$. Then $\mathbb E_{a,b,c}$ have four
umbilic points located in the plane of symmetry  orthogonal to middle axis;   they are of the
 type $D_1$,   i.e., a singularity of index $1/2$ and having one separatrix  for the principal curvature lines. For all $\alpha$ the singularities of $\fol_{\alpha}$  are the four umbilic points and this configuration
 is  topo­lo­gically equivalent
 to the principal configuration $\mathcal P$ of the ellipsoid near the umbilic points which are of type D$_1$  \end{prop}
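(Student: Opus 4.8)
The plan is to settle the three assertions in turn; the crux is a short computation of the first jet of the Hopf differential at an umbilic, which turns out to be the pure ``lemon'' and therefore survives the constant-angle rotation that defines $\fol_\alpha$.

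\emph{The umbilics.} Parametrizing $\mathbb E_{a,b,c}$ and solving $k_1=k_2$ (or invoking the classical description of the umbilics of a triaxial ellipsoid) yields exactly four umbilics, with $x_0^2/a=(a-b)/(a-c)$, $z_0^2/c=(b-c)/(a-c)$, $y_0=0$; they lie in the plane $\{y=0\}$, the plane of symmetry orthogonal to the middle ($b$-)axis. I record for later use that the section of $\mathbb E_{a,b,c}$ by $\{y=0\}$ is a normal section at each of its points, parametrized as $(\sqrt a\cos t,0,\sqrt c\sin t)$, and that at the umbilic ($t=t_0$) one has $a\sin^2 t_0+c\cos^2 t_0=b$ and $\sin t_0\cos t_0=\sqrt{(a-b)(b-c)}/(a-c)$.

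\emph{The singular set of $\fol_\alpha$.} At an umbilic $k_1=k_2$, so by Definition~\ref{fol_alpha} every tangent direction has normal curvature $k_n(\alpha)=k_1\cos^2\alpha+k_2\sin^2\alpha$ and is a leaf direction of $\fol_\alpha$; thus each umbilic is singular for $\fol_\alpha$, for every $\alpha\in(-\pi/2,\pi/2)$. Away from $\mathcal U$ the two principal directions form a smooth nowhere-singular pair of line fields, and $\fol_\alpha^{\pm}$ is obtained from $\mathcal P_1$ by the smooth, nowhere-vanishing operation of rotation through the fixed angle $\pm\alpha$ in each tangent plane; hence $\operatorname{Sing}(\fol_\alpha)=\mathcal U$, the four umbilic points, for all $\alpha$.

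\emph{Topological type near an umbilic $p_0$.} In an isothermal chart $w$ centered at $p_0$ the principal net is $\operatorname{Im}(\Phi(w)\,dw^2)=0$ with $\Phi$ the Hopf differential, whose first jet is $\Phi(w)=p\,w+q\,\bar w+O(|w|^2)$, $p=\Phi_w(0)$, $q=\Phi_{\bar w}(0)$. Writing $\mathbb E_{a,b,c}$ near $p_0$ as a graph $h(u,v)=\tfrac k2(u^2+v^2)+\tfrac16(Au^3+3Cuv^2)+O(4)$ over the tangent plane in coordinates adapted to the reflection in $\{y=0\}$ (so the $u^2v$ and $v^3$ terms drop out), one finds $p$ proportional to $A-3C$ and $q$ proportional to $A+C$; and the normal-section data of the previous paragraph give $A=\dfrac{dk_1}{ds}\Big|_{p_0}$ and $C=\dfrac{dk_2}{ds}\Big|_{p_0}$ along that section, which are in the ratio $3:1$, i.e.\ $A=3C$. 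Therefore $p=0$ while $q\ne0$: the principal net at $p_0$ is, to first order, the canonical lemon $\operatorname{Im}(\bar w\,dw^2)=0$, which in particular reconfirms the $D_1$ type (index $\tfrac12$, one separatrix). Since at each point $\fol_\alpha^{\pm}$ is the $\pm\alpha$-rotation of $\mathcal P_1$, it is a branch of $\operatorname{Im}\big(e^{\mp2i\alpha}\Phi\,dw^2\big)=0$, whose first-jet coefficients are $(e^{\mp2i\alpha}p,e^{\mp2i\alpha}q)=(0,e^{\mp2i\alpha}q)$; absorbing the unimodular constant $e^{\mp2i\alpha}$ into the chart, this is once more the canonical lemon. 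By the structural stability of Darbouxian umbilics the higher-order terms cannot alter the topological type, so each $\fol_\alpha^{\pm}$ has a $D_1$ singularity at $p_0$, for every $\alpha$. Doing this at the four umbilics, and noting that the explicit separatrix directions (for $\fol_\alpha^{+}$ the ray $\phi=2\alpha$ in the chart, for $\fol_\alpha^{-}$ the ray $\phi=-2\alpha$) together with the reflection in $\{y=0\}$ fix the mutual position of $\fol_\alpha^{+}$ and $\fol_\alpha^{-}$, one obtains that the configuration of $\fol_\alpha$ near its singular set is topologically that of $\mathcal P$.

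\emph{Main obstacle.} The only non-formal ingredient is the identity $A=3C$, i.e.\ $\Phi_w(0)=0$ at the umbilics. Without it one would still know the umbilics are $D_1$, but for a generic $D_1$ umbilic a constant-angle rotation can push the singularity across the $D_1$--$D_2$ wall --- because $p\,e^{3i\phi}+q\,e^{i\phi}$ mixes the angular frequencies $1$ and $3$, which a rotation of $\phi$ cannot jointly absorb --- so ``$\fol_\alpha\sim\mathcal P$ for all $\alpha$'' would be false. It is precisely the \emph{exact} vanishing of $\Phi_w$ at the umbilics, the principal net there being the pure lemon rather than a generic $D_1$, that makes the rotation harmless; extracting $A=3C$ from the explicit curvature derivatives along the principal-plane ellipse is the heart of the matter.
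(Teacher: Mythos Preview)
Your approach is correct and genuinely different from the paper's. The paper writes the binary differential equation of $\fol_\alpha$ explicitly in a Monge chart (with the characteristic term $\cos 2\alpha\sqrt{u^2+v^2+\ldots}$), performs a polar blow-up, and checks by hand that the two singular points on the exceptional divisor, at polar angles $\pm 2\alpha$, are hyperbolic saddles; the $D_1$ picture then follows by blowing down. You instead pass through the Hopf differential and isolate the structural reason the result holds: at an ellipsoid umbilic the 1-jet of $\Phi$ is the \emph{pure lemon} ($p=\Phi_w(0)=0$, equivalently $A=3C$ in your Monge cubic), and for a pure lemon the constant-angle rotation $\Phi\mapsto e^{\mp 2i\alpha}\Phi$ is undone by a rotation of the chart --- so each $\fol_\alpha^{\pm}$ is again $D_1$, with separatrix at $\phi=\pm 2\alpha$. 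This matches the paper's blow-up output exactly, but explains \emph{why}: as you note, a generic $D_1$ umbilic (with $p\ne 0$) mixes angular frequencies $1$ and $3$, so the rotation need not preserve the type; the ellipsoid avoids this because $p=0$. What the paper's route buys is that it treats the pair $\{\fol_\alpha^+,\fol_\alpha^-\}$ via a single implicit equation and needs no appeal to structural stability; what yours buys is a transparent invariant reason and a warning that the statement is special to the ellipsoid's umbilical 3-jet. Two places you could tighten: (i) you assert the ratio $A:C=3:1$ but do not carry out the computation --- it does follow from your normal-section set-up (using $f(t)=a\sin^2 t+c\cos^2 t$, $g(t)=\cos^2 t/a+\sin^2 t/c$, $f(t_0)=b$, $g(t_0)=b/ac$, $f'/g'=ac$, one gets the ratio $2acg(t_0)/b+1=3$), and it matches the paper's Monge expansion whose cubic is proportional to $u^3+uv^2$; (ii) the passage from ``each of $\fol_\alpha^{\pm}$ is $D_1$ with separatrix at $\pm 2\alpha$'' to ``the pair is topologically the principal configuration'' is sketched rather than proved --- but the paper's final sentence is at the same level of detail.
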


\begin{proof} 
%Without lost of generality suppose that $\mathbb E_{a,b,c}$ is defined  by the equation
%$H(x,y,z)=\frac{x^2}{a }+\frac{y^2}{b }+\frac{z^2}{c }=1$, with   $a>b>c>0$.
Consider the parametrization of the ellipsoid in a neighborhood of the umbilic point
$p_0= (u_0,0,v_0)= (  \sqrt{\frac{a(a -b )}{a -c }},\,0,\,
 \sqrt{\frac{c(c -b )}{c -a }}\;).$
{\small
 $$\alpha(u,v)= p_0 +u E_1 +v E_2+\frac 12[ \sqrt{ac}(u^2+v^2)+\frac{\sqrt{c(b-c)(a-b)}}{b^3}(u^3+uv^2)+h.o.t]E_3
 +h.o.t.$$
}
Here   $\{E_1,E_2,E_3\}$, $E_2=(0,1,0)$, is a positive orthonormal base and the ellipsoid is  oriented by $E_3=- \nabla H(p_0)/|\nabla H(p_0).$

In  a neighborhood of the umbilic point $(0,0)$
 the differential equation
  of the foliation  $\fol_{\alpha}$ in the chart $(u,v)$ is given by:

  $A(u,v)dv^2+B(u,v)dudv+C(u,v)du^2=0$, where

  $$\aligned A(u,v)=&-u-\cos 2\alpha \sqrt{u^2+v^2+R_3(u,v)}+ A_2(u,v)\\
  B(u,v)=&2v+B_2(u,v)\\
  C(u,v)=& u-\cos 2\alpha \sqrt{u^2+v^2+R_3(u,v)}+ C_2(u,v)\endaligned$$
 Here  $A_2=O(r^2),\; C_2=O(r^2)$ and $R_3(u,v)=O(r^3), \; r=\sqrt{u^2+v^2}$.

The above implicit equation   has,  real separatrices with limit direction given by $±2\alpha$
 and the behavior of the integral curves  near $0$ is the same of an umbilic point of type $D_1$.

 In fact, consider the blowing-up $u=r\cos\alpha,\;v=r\sin\alpha$.

 The differential equation of  $\fol_{\alpha}$  in the new variables is given by:
 $$\aligned (&\cos 2\alpha-\cos\alpha+rR_1(r,\alpha) )dr^2+
 r (2 \sin\alpha+rR_2(r,\alpha))dr d\alpha\\
+&r^2(\cos 2\alpha 
+ \cos\alpha+r R_3(r,\alpha))d\alpha^2=0.\endaligned$$

 The two singular points are given by $(2\alpha,0)$ and $(-2\alpha,0)$. Direct analysis shows that both singular points are hyperbolic saddles of the adapted vector fields to the implicit equation near these singularities. The blowing-down of the saddle separatrices are the umbilic separatrices of $\fol_{\alpha}^+ $ and $\fol_{\alpha}^- $. See Fig. \ref{fig:folrodelip}.

  \begin{figure}[ht]
 \psfrag{p1}{$p_1$}
    \psfrag{p2}{$p_2$}
  \psfrag{p3}{$p_3$}
 \psfrag{p4}{$p_4$}
\begin{center}
\includegraphics[scale=0.50]{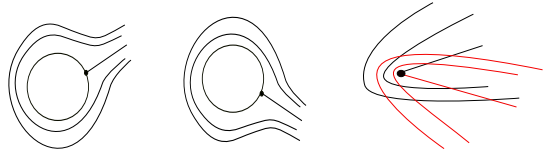}
\caption{Resolution of the foliations $\fol_{\alpha}^+$ and $\fol_{\alpha}^-$.  \label{fig:folrodelip} }
\end{center}
\end{figure}

Therefore it follows that
 the pair of  foliations $\fol_{\alpha}^+$  and $\fol_{\alpha}^-$   near an umbilic point of
 the ellipsoid with three distinct axes is  topologically equivalent to the configuration
of principal lines near a Darbouxian umbilic point $D_1$.
\end{proof}

\begin{rema} The study of principal lines was first considered by Monge, see \cite{Mo5} and \cite{Mo6}.
 Near umbilic points the behavior of principal lines on real analytic surfaces was established by Darboux, \cite{Da2}. See also \cite{Gu}, \cite{gs1}, \cite{gascoloquio} and references therein.
\end{rema}

\begin{prop}\label{prop:55ma}
Consider an  \index{ellipsoid} ellipsoid $\mathbb E_{a,b,c} $ with   $a>b>c>0$. On the ellipse $\Sigma_{xz}\subset
\mathbb E_{a,b,c}$, containing the four  umbilic points,$\;p_i$, ($i=1,\cdots,4 \;$) counterclockwise
oriented, denote by $s_1(\alpha)=2\int_{c}^{b}\sin\alpha[\frac{\sqrt{u}}{(-H(u))}]du$  {\rm (} resp. $ s_2(\alpha)=2\int_{b}^{a}\cos\alpha[\frac{\sqrt{v}}{H(v)}]dv$ {\rm )}  a distance between the adjacent umbilic points
$p_1$ and $p_4$ {\rm (} resp. $p_1$ and $p_2$ {\rm )}. Define $\rho(\alpha)=\frac{s_2(\alpha)}{s_1(\alpha)}$.

Then if $\rho\in\mathbb R\setminus \mathbb Q$ {\rm (} resp. $\rho\in \mathbb Q$)
 all the leaves of $\fol_{\alpha}$ are recurrent {\rm (} resp. all, with the exception of the   umbilic separatrices, are closed{\rm )}. See Fig. \ref{fig:51ma}. \end{prop}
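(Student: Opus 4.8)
The plan is to reduce the dynamics of $\fol_{\alpha}$ to the classical dichotomy for a linear foliation of a flat torus, i.e. to a billiard in a rectangle. First I would introduce on $\mathbb E_{a,b,c}$ the elliptic (Jacobi) coordinates $(u,v)$, $u\in(c,b)$, $v\in(b,a)$, adapted to the confocal family of quadrics. By Dupin's theorem the coordinate lines $\{v=\mathrm{const}\}$ and $\{u=\mathrm{const}\}$ are the leaves of the two principal foliations $\mathcal P_1,\mathcal P_2$, the four umbilic points sit over the corner $u=v=b$ of the coordinate domain, and the induced metric is of Liouville type. A standard reparametrization --- by suitable primitives of the differentials appearing in the integrands of $s_1,s_2$ --- brings it to the conformally flat form $W(\xi,\eta)\,(d\xi^2+d\eta^2)$, in which $\mathcal P_1,\mathcal P_2$ are the straight lines parallel to the axes and $\xi,\eta$ sweep intervals of lengths $s_1(\pi/2)=2\int_c^b\frac{\sqrt u}{-H(u)}\,du$ and $s_2(0)=2\int_b^a\frac{\sqrt v}{H(v)}\,dv$ across one fundamental domain. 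Since the flat metric $d\xi^2+d\eta^2$ is conformal to the induced one it preserves Riemannian angles, so by Definition \ref{fol_alpha} (and Euler's formula) the leaves of $\fol_{\alpha}^{\pm}$ are exactly the straight lines making the constant angle $\pm\alpha$ with the $\mathcal P_1$-direction. The four umbilics become cone points of angle $\pi$ --- the flat incarnation of the $D_1$ (index $1/2$, single separatrix) local model of $\fol_{\alpha}$ obtained in Proposition \ref{prop:54ma} --- so $(\mathbb E_{a,b,c},\fol_{\alpha})$ is a flat \emph{pillowcase}: a rectangle $R$ with sides $s_1(\pi/2)$ and $s_2(0)$, folded along its boundary, the umbilics being its four corners and $\Sigma_{xz}$ the image of $\partial R$. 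The consecutive sides of $R$ joining $p_1$ to $p_4$ and $p_1$ to $p_2$ have orthogonal projections onto the direction transverse to $\fol_{\alpha}$ of lengths $s_1(\alpha)=s_1(\pi/2)\sin\alpha$ and $s_2(\alpha)=s_2(0)\cos\alpha$; this is the sense in which $s_i(\alpha)$ is a distance between adjacent umbilics adapted to $\fol_{\alpha}$.

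Next, the orientation double cover of this pillowcase, branched over the four umbilics, is a rectangular flat torus $T=\RR^2/\Lambda$, with $\Lambda$ spanned by orthogonal vectors of lengths $s_1(\pi/2)$ and $s_2(0)$ along the $\mathcal P_1$- and $\mathcal P_2$-axes, and $\fol_{\alpha}^{\pm}$ lifts to the linear foliation of $T$ in the constant direction at angle $\pm\alpha$ with the first axis. By the classical dichotomy for linear foliations of a flat torus, a leaf is closed --- and then every leaf is closed --- exactly when its direction is proportional to a lattice vector, and otherwise every leaf is dense. For the direction at angle $\alpha$ from the $\mathcal P_1$-generator this proportionality means $s_1(\pi/2)\sin\alpha$ and $s_2(0)\cos\alpha$ are commensurable, i.e. $\rho(\alpha)=s_2(\alpha)/s_1(\alpha)\in\QQ$. (Equivalently, the first-return map of $\fol_{\alpha}$ to the arc of $\Sigma_{xz}$ joining $p_1$ to $p_4$ is, in the flat coordinate, the rigid rotation through $2\pi\rho(\alpha)$, and one appeals to the elementary theory of rotations.) Finally I would transfer the conclusion back through the branched cover: if $\rho(\alpha)\notin\QQ$, every leaf upstairs is dense, hence every leaf of $\fol_{\alpha}$ --- including the four umbilic separatrices --- is dense and in particular recurrent; if $\rho(\alpha)\in\QQ$, every leaf upstairs is closed, hence every leaf of $\fol_{\alpha}$ avoiding the umbilics is closed, the four separatrices (which lift to the closed geodesics through the branch points and descend either to closed loops or to saddle connections between umbilics) being the only exceptions. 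This gives the asserted dichotomy; see Figure \ref{fig:51ma}.

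The main obstacle is the first step: producing this flat structure with the correct periods, that is, verifying through the explicit Jacobi-coordinate computation that the two sides of the fundamental rectangle are exactly $2\int_c^b\frac{\sqrt u}{-H(u)}\,du$ and $2\int_b^a\frac{\sqrt v}{H(v)}\,dv$, so that the transverse spacings of $\fol_{\alpha}$-leaves across them are $s_1(\alpha)$ and $s_2(\alpha)$ and the relevant commensurability ratio is precisely $\rho(\alpha)$; the bookkeeping of the $\sin\alpha$ and $\cos\alpha$ factors and of the normalizations lives entirely here. One must also take care at the four cone points, checking that in the rational case the only leaves not carried to genuine closed curves are the umbilic separatrices --- this is where the single-separatrix $D_1$ structure of Proposition \ref{prop:54ma} is used. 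The remaining ingredients --- the conformal identification of $\fol_{\alpha}$ with a constant-direction foliation, and the linear-flow/rotation dichotomy itself --- are standard.
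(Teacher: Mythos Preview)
Your proposal is correct and follows essentially the same route as the paper: the paper writes the differential equation of $\fol_{\alpha}$ in the principal (Jacobi) chart, makes exactly your change of variables $d\sigma_1=\sin\alpha\sqrt{u/(-H(u))}\,du$, $d\sigma_2=\cos\alpha\sqrt{v/H(v)}\,dv$ to obtain $d\sigma_2^2=d\sigma_1^2$ on the rectangle $[0,s_1(\alpha)]\times[0,s_2(\alpha)]$, and then analyzes the first-return map on $\Sigma_{xz}$ as the composition of two orientation-reversing isometries (each fixing a pair of antipodal umbilics), hence a rotation with rotation number $\rho(\alpha)$. Your pillowcase/branched-torus language is the standard geometric repackaging of this same billiard-in-a-rectangle argument.
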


 \begin{figure}[ht]
 \psfrag{p1}{$p_1$}
    \psfrag{p2}{$p_2$}
  \psfrag{p3}{$p_3$}
 \psfrag{p4}{$p_4$}
\begin{center}
 \includegraphics[scale=0.60]{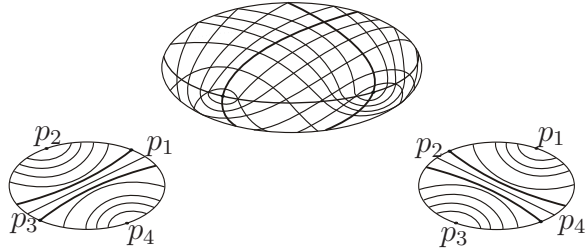}
\caption{Foliations ${\mathcal R}_i$ of  the ellipsoid $\mathbb E_{a,b,c}$ \label{fig:51ma} }
\end{center}
\end{figure}

\begin{proof}
 The ellipsoid $\mathbb E_{a,b,c}$ belongs to  the tri­ple or­tho­gonal system of surfaces defined by the
one pa­ra­meter fa­mi­ly of quadrics,
$\frac{x^2}{a -\lambda} +\frac{y^2}{b -\lambda}+\frac{z^2}{c -\lambda}=1$ with $a>b>c> 0$, see
also \cite{spivak} and \cite{struik}.
The  following parametrization $\alpha(u,v) =   (x(u,v),y(u,v),z(u,v))$ of $\mathbb E_{a,b,c}$, where

{\small
\begin{equation}\label{eq:porto}
\alpha(u,v)=\left(
 ±\sqrt{\frac{a (u-a )(v-a )}{(b -a )(c -a )}},  
 ±\sqrt{\frac{b (u-b )(v-b )}{(b -a )(b -c )}},  
  ±\sqrt{\frac{c (u-c )(v-c )}{(c -a )(c -b )}}\right),  \end{equation} }
\noindent defines  the principal coor­dina­tes $(u,v)$ on $\mathbb E_{a,b,c}$, with $u\in
(b,a)$ and $v\in (c,b)$.

The first fundamental form of $\mathbb E_{a,b,c}$  is given by:

\begin{equation} \label{eq:Ie} I=ds^2=Edu^2+Gdv^2= \frac{ (v-u)u}{4H(u)}du^2+
 \frac{ (u-v)v}{4H(v)}dv^2 \end{equation}

The second fundamental form with respect to the normal
$N=-(\alpha_u\wedge \alpha_v)/|\alpha_u\wedge \alpha_v|$ is given
by

\begin{equation} \aligned \label{eq:IIe} II=& edu^2+gdv^2=\frac{(v-u)}{4 H(u)}\sqrt{\frac{abc}{uv}}
du^2+\frac{(u-v)}{4 H(v)}\sqrt{\frac{abc}{uv}}dv^2\\
H(x)=& (x-a)(x-b)(x-c)\endaligned
\end{equation}

%\noindent where $H(x)=(x-a)(x-b)(x-c)$.

Therefore the principal curvatures are given by:

$$k_1=\frac{e}E=  \frac 1u \sqrt{\frac{abc}{uv}},\;\; k_2=\frac gG=\frac 1v
\sqrt{\frac{abc}{uv}}.$$

The four umbilic points are given by:
{\small
 { $$(±x_0,0,±z_0)= (± \sqrt{\frac{a(a -b )}{a -c }},\,0,\, ±
 \sqrt{\frac{c(c -b )}{c -a }}\;).$$}}
\vskip .3cm

%The differential equation of the foliations ${\mathcal R}_i(\alpha)$  in the principal chart $(u,v)$ is given by:
%$$H(u)v\cos^2\alpha\; dv^2+H(v)u\sin^2\alpha\; du^2=0,\;\;\;  b< u<a, \; c<u<b.$$

The differential  equation of the foliation  $\fol_{\alpha}$ in the principal chart $(u,v)$ is given by
$$\aligned &H(u)v\cos^2\alpha \;dv^2 +H(v)u\sin^2\alpha\; du^2= 0 \;\;  \Leftrightarrow  \\
&\frac{v}{H(v)} \cos^2\alpha \;dv^2+\sin^2\alpha\; \frac{u}{H(u)}du^2=0.\endaligned$$
Define $d\sigma_1=  \sin\alpha \sqrt{\frac{u}{(-H(u))}}du$ and $d\sigma_2= \cos\alpha \sqrt{\frac{v}{ H(v)}}du$

Therefore the differential equation of $\fol_{\alpha}$  is equivalent to $d\sigma_2^2-d\sigma_1^2=0$ in the rectangle $[0,s_1(\alpha)]×[0,s_2(\alpha)]$.

 On the ellipse $\Sigma=\{(x,y,z) |\{\frac{ x^2}{a }+\frac{z^2}{c }=1,\; y=0\}$
define the   distance between the umbilic points $p_1=(x_0,0,z_0)$ and $p_4=(x_0,0,-z_0)$
 by
$s_1(\alpha)=2\int_{c}^{b}\sin\alpha[\frac{\sqrt{u}}{(-H(u))}]du$ and that  between the umbilic points
$p_1=(x_0,0,z_0)$ and $p_2=(-x_0,0,z_0)$
 is  given by
$s_2(\alpha)=2\int_{b}^{a}[\cos\alpha\frac{\sqrt{v}}{H(v)}]dv$.

The ellipse $\Sigma$ is the union of four umbilic points and   four principal umbilical
separatrices for the principal
foliations.  So $\Sigma\backslash\{p_1,p_2,p_3,p_4\}\;$ is a transversal section of the foliation $\fol_{\alpha}$.

  Therefore near the  umbilic point $p_1$  the   foliation, say $\fol^1_{\alpha}$,  with
umbilic separatrix contained in  the region $\{y> 0\}$ define a the return map $\sigma_+:\Sigma\to
\Sigma$ which is an isometry,  reverting the orientation, with $\sigma_+(p_1)=p_1$. This follows
because in the principal chart  $(u,v)$ this return  map is defined by $\sigma_+:\{u=b\}\to
\{v=b\}$ which satisfies the  differential equation $\frac{ds_2}{ds_1}=-1$. By analytic
continuation it results that $\sigma_+$  is a isometry reverting orientation with two fixed points
$\{p_1,\;p_3\}$. The geometric reflection $\sigma_-$,  defined in the region $y<  0$ have the two
umbilic
$\{p_2,\;p_4\}$ as fixed points.
So the Poincaré return map $\pi_1:\Sigma\to \Sigma$ (composition of two isometries $\sigma_+$ and
$\sigma_-$) is a rotation with rotation number given by ${s_2}(\alpha)/{s_1}(\alpha)$.

Analogously for   $\fol^2_{\alpha}$ with the Poin­caré re­turn
map given by $\pi_2=\tau_+\circ \tau_-$ where $\tau_+$ and $\tau_-$ are two
isometries having respectively   $\{p_2,p_4\}$ and $\{p_1,p_3\}$ as
fixed points.
\end{proof}

\begin{rema} The special case $\alpha=\pi/4$ was studied in \cite{gasmedia}. A more general framework of implicit differential equations, unifying various families of geometric curves    was studied in \cite{gsmg}. See also \cite{gascoloquio}.
\end{rema}

\begin{prop} \label{caract_Dupin_via_Darboux} In any surface, free of umbilic points, the leaves of   $\fol_{\alpha}^+$ and $\fol_{\alpha}^-$  are Darboux curves if and only if the surface  is conformal to  a Dupin cyclide.
\end{prop}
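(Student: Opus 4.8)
The plan is to use the conformal invariance of both sides of the equivalence to pass to a principal chart, and then to prove the two implications separately, the converse one being the substantial part. First, angles between curves and the principal configuration $\mathcal P$ are M\"obius--invariant, so a conformal map carries $\fol_{\alpha}^{+}$ and $\fol_{\alpha}^{-}$ on a surface to the corresponding foliations on its image, and (by the remark closing Section~\ref{sc:2}) it carries Darboux curves to Darboux curves; hence the whole statement is conformally invariant, which explains the ``up to conformal equivalence'' in the conclusion and lets us move $M$ freely by conformal maps. I will work in a principal chart $(u,v)$ of the umbilic--free surface $M$, with $I=E\,du^{2}+G\,dv^{2}$, $II=e\,du^{2}+g\,dv^{2}$, $k_{1}=e/E$, $k_{2}=g/G$, and the $u$--curves tangent to $\mathcal P_{1}$; a leaf of $\fol_{\alpha}^{\pm}$ then solves the first--order equation $\sqrt{G}\,dv=\pm\tan\alpha\,\sqrt{E}\,du$. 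The relevant Lorentz object along such a leaf is the sphere of $\Lambda^{4}$ tangent to $M$ with the leaf's normal curvature $k_{n}(\alpha)=k_{1}\cos^{2}\alpha+k_{2}\sin^{2}\alpha$, which by Proposition~\ref{prop_sigma=k_g m+n} is $\Sigma_{\alpha}=\cos^{2}\alpha\,\sigma_{1}+\sin^{2}\alpha\,\sigma_{2}$ with $\sigma_{i}=k_{i}m+\vect{n}$; Rodrigues' relation gives $\partial_{u}\sigma_{1}=(\partial_{u}k_{1})\,m$ and $\partial_{v}\sigma_{2}=(\partial_{v}k_{2})\,m$, so that ``each principal sphere is constant along its own curvature line'' is the same as $\partial_{u}k_{1}=\partial_{v}k_{2}=0$, i.e.\ $M$ is a Dupin surface.

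For the implication ($\Leftarrow$), assume $M$ is conformal to a Dupin cyclide; by the reduction we may take $M$ to be one of the three model cyclides of this Section. On the cylinder of revolution the leaves of $\fol_{\alpha}$ are circular helices, and here the statement is the elementary fact that the osculating sphere of a circular helix is tangent to the cylinder containing it; on the (flat) tube around a geodesic of $\bs^{3}$ and on the cone of revolution one argues the same way, the $\fol_{\alpha}$--leaves there being geodesics of the flat tube and logarithmic spirals on the developed cone respectively, and both cases also fall under the explicit integration of the Darboux equation on canal surfaces carried out in Section~\ref{sc:8}. More uniformly, one can simply insert the Dupin relations $\partial_{u}k_{1}=\partial_{v}k_{2}=0$ (which simplify the Gauss and Codazzi equations as well) and the leaf equation into the Darboux differential equation of Section~\ref{sc:4} and check that it is then satisfied identically.

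For the converse ($\Rightarrow$), suppose every leaf of $\fol_{\alpha}^{+}$ and $\fol_{\alpha}^{-}$ is a Darboux curve. Differentiating the leaf equation $\sqrt{G}\,dv=\pm\tan\alpha\,\sqrt{E}\,du$ twice expresses $v',v'',v'''$ along a leaf in terms of $u$, $E$, $G$ and their derivatives; substituting this into the third--order Darboux equation of Section~\ref{sc:4} gives, for each sign, a relation that must hold identically on $M$. Adding and subtracting the relations for the two signs --- or, if the hypothesis is read as holding for all $\alpha$, separating the powers of $\tan\alpha$ --- and then using the Gauss and Codazzi equations to eliminate the derivatives of $E$ and $G$ should leave exactly $\partial_{u}k_{1}=0$ and $\partial_{v}k_{2}=0$; thus each principal curvature is constant along its own curvature lines, $M$ is a Dupin surface, and by the classical classification of Dupin surfaces $M$ is conformal to an open piece of a Dupin cyclide. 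In particular a single value $\alpha\in(0,\pi/2)$ together with both signs already suffices for this direction.

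The hard part will be the bookkeeping in the converse implication: the Darboux equation is of third order and its coefficients are built from $E,G,k_{1},k_{2}$ and their derivatives up to second order, so the substitution of the $\fol_{\alpha}$ constraint produces a long expression, and the crux is to show that --- modulo Gauss--Codazzi and after combining the two signs --- it collapses to precisely the two Dupin conditions, neither losing information (so that the conclusion genuinely follows) nor requiring anything more. The two subsidiary ingredients, both standard, are pinning down the form of the Section~\ref{sc:2} characterization most convenient for this comparison and a correct appeal to the classification of Dupin surfaces in $\RR^{3}$.
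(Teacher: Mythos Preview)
Your overall strategy is right and matches the paper's, but you are making the converse direction much harder than it needs to be. The point of Proposition~\ref{prop:dpc} is that the Darboux condition, rewritten in a principal chart with the angle $\alpha$ as dependent variable, reads
\[
3(k_{1}-k_{2})\sin\alpha\cos\alpha\,\frac{d\alpha}{ds}
=\frac{1}{\sqrt{E}}\frac{\partial k_{1}}{\partial u}\cos^{3}\alpha
+\frac{1}{\sqrt{G}}\frac{\partial k_{2}}{\partial v}\sin^{3}\alpha .
\]
A leaf of $\fol_{\alpha}^{\pm}$ is by definition a curve along which $\alpha$ is constant, so $d\alpha/ds=0$ and the left side vanishes identically. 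Hence the leaves of $\fol_{\alpha}^{+}$ and $\fol_{\alpha}^{-}$ are Darboux curves if and only if the right side vanishes for $+\alpha$ and $-\alpha$; adding and subtracting those two relations (or varying $\alpha$) gives $\partial_{u}k_{1}=\partial_{v}k_{2}=0$, and one then quotes the Dupin characterization (Proposition~\ref{prop02canal}). That is the whole proof in the paper: two lines once \eqref{eq:dpc} is available.

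So you do not need to differentiate the leaf equation twice, substitute into a ``third--order'' Darboux equation, and then invoke Gauss--Codazzi to simplify. All of that work has already been absorbed into the derivation of \eqref{eq:dpc} in Section~\ref{sc:4} (where the Codazzi equations are precisely what cancel the mixed terms). Your plan would work, but it re-derives \eqref{eq:dpc} along each leaf instead of using it directly. Your observation that a single $\alpha\in(0,\pi/2)$ together with both signs already forces both Dupin conditions is correct and is exactly the add/subtract step above.
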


\begin{proof} From the differential equation of Darboux curves, see  equation \eqref{eq:dpc} in Section \ref{sc:4}, it follows that:
$$3(k_1-k_2)\sin\alpha\cos\alpha\;\frac{d\alpha}{ds} =  \frac{1}{\sqrt{E}} \frac{\partial k_1}{\partial u}
 \cos^3\alpha+ \frac{1}{\sqrt{G}}\frac{\partial k_2}{\partial v}
\sin^3\alpha.$$
So all  Darboux curves are leaves of $\fol_{\alpha}^± $  if and only if  $ \frac{\partial k_1}{\partial u}(u,v)
=  \frac{\partial k_2}{\partial v}(u,v)=0$. By Proposition 
 \ref{prop02canal} in Section \ref{sc:8} this is exactly the
 condition that characterizes the Dupin cyclides. 
\end{proof}
\section{Spheres tangent to a  surface along a curve}\label{sc:2}
%%%%%%%%%%%%%%%%%%%%%%%%%%%%%%%%%%%%%%%%%%%%%%%%%%%%%
Above each point $m\in M$ which is not an umbilic, the spheres
 tangent to the surface $M$ having a saddle contact with $M$ form
 an interval of boundary the two osculating spheres at $m$. Let 
us first define the 3-dimensional subset $V(M)\subset \Lambda^4$ 
as the union of the spheres having a saddle contact with the surface $M$ and the osculating spheres.

Therefore $V(M)$ is an interval fiber-bundle $\pi V(m)\rightarrow M$ over $M$;
 the boundary of $V(M)$ is  the  surface ${\mathcal O}$ of spheres osculating $M$
 (this surface has two connected components when the surface $M$ has
 no umbilical point). $V(M)$ is a 3-manifold with boundary a surface
 ${\mathcal O}$ made of spheres osculating $M$ (this surface has 
two connected components when the surface $M$ has no umbilical point).
 $V(M)$  inherits from $\Lambda^4$ a semi-Riemannian metric.
 At each point, the kernel direction is the direction of the light ray through the point.

Let us consider now a curve
$C\subset M^2$.

The restriction of $V(M)$ to $C$  form a two-dimensional surface
 in $\Lambda^4$ which is a light-ray interval bundle $V(C)$ over
 $C$ out of the umbilical points of $M$ which may belong to $C$. 
 From $V(M)$, we get on $V(C)$ an induced semi-Riemannian metric.

In this text we will use {\Large $'$} for derivatives with respect 
to parametrization of curves contained in $\RR^3$ or $ß^3$, (often 
 the parameter is an  arc length), and $\Pt{}$ for derivatives with
 respect to a parametrization of a curve  in $\Lambda^4$ (often 
 the parameter is an  arc length, but now for the metric induced from the Lorentz ``metric").

One section of this bundle has the property that at every point $m$ where
 the curve is not tangent to a principal direction, the point
 $\sigma_{C'}(m)$  correspond to a sphere $\Sigma(m)$ such that
 one branch of the intersection $\Sigma_{C'}(m)\cap M$ is tangent
 to $C$ at $m$. If at $m$ the curve $C$ is tangent to a principal
 direction we take the corresponding osculating sphere as $\Sigma_{C'}(m)$.

We call  this section $Cansec(C)$ of $V(C)$ the canonical section, and
 the corresponding family of spheres the canonical family along $C$; 
the envelope $CanCan(C)$ of the spheres of this family is called canonical
 canal corresponding to $C\subset M$.

\begin{prop}\label{Cansec} Consider the notation above. Then the following holds:

i)  The section  $Cansec(C)$  satisfies ${\cal L}(\overrightarrow{k_g})>0$,
 and moreover $\overrightarrow{k_g} \in T_m V(M)$.

ii)  The section $Cansec(C)$ is a geodesic in $V(C)$. It is of minimal
 length among sections of $V(C)$.
% therefore it. 
 Moreover it is the unique section on $V(C)$ which is of critical  length.

iii)  One of the cuspidal edges of the envelope  $CanCan(C)$ is $C$.
\end{prop}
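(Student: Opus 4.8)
The plan is to set up explicit coordinates on $V(C)$ near a non-principal point $m_0 \in C$ and to read off all three statements from the formula $\sigma = k_g m + \vect n$ of Proposition \ref{prop_sigma=k_g m+n}. Parametrize $C$ by arc length $s$ and write $C(s) \subset M \subset \LL ight$ together with the unit normal $\vect n(s)$ to $\Sigma_k \cap M$ along $C$, or rather the normal $\vect n$ to $M$ inside $\SS^3$; the fibre of $V(C)$ over $C(s)$ is the light-ray segment $k \mapsto \sigma(s,k) = k\, C(s) + \vect n(s)$ with $k$ ranging over the closed interval $[k_1(s), k_2(s)]$ between the principal curvatures. A section of $V(C)$ is then a choice of function $s \mapsto k(s)$ in this interval, and $Cansec(C)$ is the section for which $k(s) = k_n(s)$ is the normal curvature of $C$ at $C(s)$, i.e.\ $\sigma_{C'}(s) = k_n(s)\, C(s) + \vect n(s)$; this is exactly the sphere having second-order contact with $M$ along the direction $C'$, so one branch of $\Sigma \cap M$ is tangent to $C$. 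First I would verify (iii): the envelope $CanCan(C)$ of the family $\{\Sigma(s)\}$ is a canal surface whose characteristic circles are the contact circles, and the condition $k(s) = k_n(s)$ says precisely that the cuspidal (focal) edge of this canal runs along $C$, because second-order contact with $M$ along $C'$ forces the characteristic circle at $s$ to be tangent to $C$ to first order; tracking the degeneration of $\Sigma(s) \cap \Sigma(s + ds)$ then identifies $C$ itself as one of the two cuspidal edges.

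For (i), compute $\Pt\sigma = \overrightarrow{k_g}$ (the notation of the paper for the derivative of the canonical section in $\Lambda^4$): differentiating $\sigma = k_n m + \vect n$ along $C$ gives $\Pt\sigma = k_n' m + k_n m' + \vect n{}'$. Using that $m' = C'$ is the unit tangent (so $\LL(m',m') = 1$ — it is a space-like unit vector for the induced metric), that $\LL(m,m) = 0$, $\LL(m, m') = 0$, $\LL(m, \vect n) = 0$, $\LL(\vect n, \vect n) = 1$, and that $\LL(\vect n{}', m') = -\LL(\vect n, m'') = -k_n$ by the definition of normal curvature, a direct expansion yields $\LL(\Pt\sigma, \Pt\sigma) = (\text{positive combination})$; the key point is that the $m'$-component survives with a nonzero coefficient, giving $\LL(\overrightarrow{k_g}) > 0$, so $Cansec(C)$ is a space-like curve. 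That $\overrightarrow{k_g} \in T_m V(M)$ is then the statement that this tangent vector is tangent to the interval bundle; this holds because $\Pt\sigma$ is a combination of $m$, $m' = C'$ (tangent to $M$, hence its osculating-sphere variation lies in $V(M)$) and the light-ray direction $m$ itself — all of which are tangent to $V(M)$ by its construction as the saddle-contact region, which is ruled by light rays over $M$.

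For (ii), the length of a section $s \mapsto \sigma(s, k(s))$ in $V(C)$ is $\int \sqrt{\LL(\tfrac{d}{ds}\sigma(s,k(s)))}\,ds$ where $\tfrac{d}{ds}\sigma(s,k(s)) = \Pt\sigma|_{\text{Cansec}} + (k(s) - k_n(s))\, m' + (\text{terms along the light ray } m)$. Since the light-ray direction $m$ is the kernel of the semi-Riemannian metric on $V(M)$ (stated in Section \ref{sc:2}), those terms drop out of $\LL$, and one gets $\LL(\tfrac{d}{ds}\sigma) = \LL(\Pt\sigma) + (\text{cross term linear in } k - k_n) + (k - k_n)^2 \LL(m',m')$. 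Writing out the first-variation in the deviation $h(s) := k(s) - k_n(s)$ shows that $h \equiv 0$ is a critical point, and because the metric restricted to $T V(C)$ transverse to the kernel is positive definite (the $\LL(m',m') = 1$ term), it is the unique critical section and a strict minimum — this gives both the geodesic claim and the minimal-length/uniqueness claims at once. The main obstacle I anticipate is bookkeeping: correctly identifying which derivative ($C'$ versus $\Pt{}$) and which normal ($\vect n$ to $M$ in $\SS^3$ versus the Lorentz-orthogonal complement) enters each formula, and checking the sign conventions so that the coefficient of $m'$ in $\Pt\sigma$ is manifestly nonzero away from principal directions — equivalently, that $k_n(s)$ genuinely lies in the open interval $(k_1(s),k_2(s))$ there, which is exactly the non-principal hypothesis. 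Once the frame $\{m, m', \vect n\}$ and its Lorentz inner products are pinned down, (i)–(iii) all fall out of the single identity $\sigma = k_n m + \vect n$ and its first derivative.
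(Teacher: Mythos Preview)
Your overall strategy --- work in the parametrization $\sigma(s,k)=k\,m(s)+\vect n(s)$ and read everything off the first and second derivatives --- is exactly the paper's approach. Two concrete points need fixing, however.

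\medskip
\textbf{The meaning of $\overrightarrow{k_g}$.} In part (i) you set $\Pt\sigma=\overrightarrow{k_g}$ and then compute the \emph{velocity} of $Cansec(C)$. In the paper $\overrightarrow{k_g}$ denotes the geodesic curvature vector of $Cansec(C)$ in $\Lambda^4$, namely $\overrightarrow{k_g}=\Ppt\sigma+\sigma$ for the arc-length parameter on $\Lambda^4$; see the computation leading to equation~(\ref{geodesic_cansec}). So part (i) is a statement about the \emph{second} derivative, not the first. The paper carries this out in the Darboux frame $(T,N_1,\vect n,m)$: first $\sigma'=k_n'm-\tau_g N_1$ gives $|\sigma'|=|\tau_g|$ (the space-like claim you are effectively proving), and then a further differentiation yields
\[
\overrightarrow{k_g}=\psi(s)\,m+\frac{1}{\tau_g^2}\bigl(k_n'+\tau_g k_g\bigr)T,
\]
from which $\LL(\overrightarrow{k_g})\ge 0$ and $\overrightarrow{k_g}\in T_mV(M)=\hat m^\perp$ are immediate. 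Your proposal stops one derivative short.

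A related slip: you say ``the $m'$-component survives with a nonzero coefficient''. In fact the $T=m'$-components of $k_n m'$ and $\vect n{}'$ cancel exactly (since $\vect n{}'=-k_nT-\tau_g N_1$ in this frame); what survives is the $N_1$-component $-\tau_g$, which is nonzero precisely away from principal directions. Your conclusion is right, the mechanism is not.

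\medskip
\textbf{The variational argument for (ii).} This is a legitimate alternative to the paper's route. The paper simply writes down, for a general section $\sigma=k\,m+\vect n$,
\[
|\Pt\sigma|^2=\tau_g^{-2}\Bigl[(k-k_n)^2+\tau_g^2\Bigr],
\]
which makes minimality and uniqueness of $k=k_n$ obvious pointwise. Your first-variation argument amounts to the same computation, but the step you leave implicit is exactly the one that matters: the cross term $2(k-k_n)\,\LL(\Pt\sigma|_{Cansec},m')$ vanishes because $\Pt\sigma|_{Cansec}=k_n'm-\tau_g N_1$ is orthogonal to $T=m'$. You should state and check this; otherwise ``$h\equiv 0$ is critical'' is asserted rather than shown. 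Once that is done, both the geodesic claim in $V(C)$ and the minimal/unique-critical claims follow, in agreement with the paper.
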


\begin{proof}
The condition defining the sphere $\Sigma_{C'}(m)$ implies that the order of
 contact  of  $C$ and $\Sigma_{C'}(m)$ is at least $2$,
 one more that the order of  contact   of $\Sigma_{C'}(m)\cap M)$ and $C$, which is at least one.
To verify this property, notice that, in terms of the arc-length  of a branch of
 $\Sigma_{C'}(m)\cap M)$, or equivalently of the arc-length $s$ on $C$,
 the angle of $\Sigma(m)$ with $M$ along $\Sigma_{C'}(m)\cap M)$ is of
 the order of $s$, if not smaller. The distance to $C$ is of the order 
of $s^2$, if not smaller. Therefore the order of the distance of a point
 of $C$ to $\Sigma_{C'}(m)$ is of order $s^3$ if not smaller. This means 
that $C$ and the sphere $\Sigma_{C'}(m)$ have contact of order at least 2 at $m$.

\begin{defi}\label{def:da}
A curve  on $M$ such that at each point its osculating sphere is
 tangent to $M$ will be called  {\em Darboux curves}.
\end{defi}

Recall a lemma from \cite{laso}

\begin{lemm}\label{contacte}
A curve $\Gamma(t)=\spa(\gamma(t))$ has contact of order $\geq k$
with a sphere $\Sigma$ corresponding to $\sigma$ iff
\[
\sigma\bot\spa(\gamma(t),\Pt{\gamma(t)},\ldots,\gamma^{(k)}(t))
\]
\end{lemm}
 
\begin{proof}
The sphere $\Sigma$ is the zero level of the function
$f(x)=\langle x| \sigma\rangle$. Then the contact of $\Gamma(t)$
and $\Sigma$ has the order of the zero of
$(f\circ\gamma)(t)=\langle \gamma(t)| \sigma\rangle$.
\end{proof}
Therefore, the sphere $Cansec(m)$ is orthogonal to $m$, $\Pt{m}$ and $\Ppt{m}$.
 This ends the proof of Proposition \ref{Cansec}.
 \end{proof}
\vskip 3 mm
Differentiating  the relation $\langle \sigma |\Pt{m}\rangle = 0$ and
 using the relation $\langle \sigma |\Ppt{m}\rangle = 0$ we get
 $\langle \Pt{\sigma} |\Pt{m}\rangle = 0$. Differentiating the
 relation $\langle \Pt{\sigma} |{m}\rangle = 0$, and using the 
relation $\langle \Pt{\sigma} |\Pt{m}\rangle = 0$, we get the
 relation $\langle \Ppt{\sigma} |{m}\rangle = 0$.

We can use as parameter the arc-length of the space-like curve
 $\sigma = Cansec$. Its geodesic curvature vector, orthogonal to $\hat{m}$ is therefore
orthogonal to $\hat{m}\oplus \RR\, \Pt{\sigma}$, proving that
 $\{ \sigma \}$ is a geodesic in $V(C)$.

We notice that $\Ppt{\sigma}$ is tangent to $T_m V(M)$, as
$T_m (V(M)) = T_m({\mathcal L}ight) = \hat{m}^{\bot}$.

Now we will use the Darboux frame $T, N_1,n , m$ of the curve 
$C\subset M\subset ß^3 \subset \LL ight$, where $N_1$ is the unit 
 vector tangent to $M$ normal to $C$ compatible with the orientation of $M$.
Using again the formula $\sigma = k_n \, m +n$, where $k_n$ is the normal
 curvature in the direction tangent to $C$ at $m$, we see that, when $\sigma$ is the  section $Cansec(C)$
\begin{equation}\label{geodesic_torsion_can}
|\sigma'|= |k_n'\, m + k_n\, m' +n'= |k_n'\, m - \tau_g N_1|= |\tau_g|
\end{equation}
\noindent where the {\it geodesic torsion} $\tau_g$ is defined by the following formula
\begin{equation} \label{geodesic_torsion}
\Scp{(\nabla_T n)}{N_1}\, =\,- \tau_g.
\end{equation}
Observe that our formula (\ref{geodesic_torsion_can}) gives an
 interpretation of the geodesic torsion of a curve $C\subset M$ as
 the  rotation speed of the canonical  family  of spheres tangent to $M$ along $C$.

In order to compute the geodesic curvature vector of the section $Cansec(C)$, we
 need to use its parametrization by arc-length in $\Lambda^4$.
Then $\Pt{\sigma}= \sigma'\frac{1}{\tau_g}= \frac{1}{\tau_g}(k_n'\, m - \tau_g N_1)$.
 Differentiating  once more, we get

$$\Ppt{\sigma}= -\frac{\Pt{\tau_g}}{\tau_g^3}(k_n'\, m - \tau_g N_1) 
+ \frac{1}{\tau_g^2}[k_n''m + k_n' T - \tau_g'N_1 - \tau_g (-k_g T + \tau_g n) ],$$
\noindent  notice that  $k_g$ is the geodesic curvature of $C \subset M$,
 while $\vect{k_g}$ is the geodesic curvature vector  of the curve $Cansec \subset \Lambda^4$.

Simplifying, we get
$$\Ppt{\sigma} =  \phi(s) m + \frac{1}{\tau_g^2} [(k_n' +\tau_g  k_g)T -\tau_g^2 n]$$
As $\vect{k_g} = \Ppt{\sigma} + \sigma$ we get
\begin{equation} \label{geodesic_cansec}
\vect{k_g} = \psi(s)m +  \frac{1}{\tau_g^2} (k_n' +\tau_g k_g)T
\end{equation}

%R Corrigido (esta ok apos troca de parenteses) {\huge vérifier les signes}

As the vector $\vect{k_g}$ is orthogonal to $m$ and to $\Pt{\sigma}= \frac{1}{\tau_g}(k_n'\, m - \tau_g N_1)$, we see that the curve $Cansec$ is a geodesic on $V(C)$.

Otherwise, when $\sigma= k \, m+ n$, we see that spheres tangent to a surface
  along a curve form a space-like curve in $\Lambda^4$; explicitly we get
\begin{equation}\label{length_section_cansec}
 |\Pt{\sigma}|= |\frac{1}{\tau_g}[(k-k_n)'m\,+\,(k-k_n) T\, +\, \tau_g N_1]|,
\end{equation}
\noindent giving, as $m$, $T$ and $N$ are mutually orthogonal, a proof of 
the fact that the  section $Cansec(C)$ has minimal length among sections, as the
 three vectors $m,T,N$ are mutually orthogonal in $\ll^5$. Formula 
\ref{length_section_cansec} shows also that no other section of $V(C)$ is of critical length.

\begin{rema}
 The characteristic circle of the envelope is the intersection
 of $span(\sigma, \Pt{\sigma})^{\bot}$ and the sphere $ß^3 \subset \RR^4$; as the vector $T$ is orthogonal 
to $m$, $n$ and $N_1$, and therefore to $\sigma$ and $\sigma'$
 (and also $\Pt{\sigma}$), the characteristic circle  is tangent at $m$ to $C$.
\end{rema}
\vskip .2cm

Recall  that a {\it drill}  (\cite{Tho}, \cite{laso}) is a curve
 in the space of spheres the geodesic curvature of which is light-like  at each point.
Generically, points of a drill are osculating spheres to the curve 
$C\in  \bs^3$ defined by the geodesic acceleration vector $\vect{k_g}$ of the drill.

We see that if we can find drills in $V(m)$ we find geodesics.
 In fact we find that way almost all of them.

Formula \ref{geodesic_cansec} implies, as $m$ is the normal direction
 to $V(M)$ at a point $\sigma = km+n$, the following theorem.
\begin{theo}\label{geodesic_Darboux_curves}
The curve $C$ is a Darboux curve  if and only if the section 
 $Cansec(C) \subset V(M)$ is a geodesic in $V(M)$. This happens
 if and only if the curve $C\subset M$ satisfies
the equation:
 \begin{equation}\label{eq:eqdd} k_n' +\tau_g k_g =0.\end{equation}
\end{theo}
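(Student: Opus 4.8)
The plan is to read off the theorem directly from the computation already assembled in formula \eqref{geodesic_cansec}. By Proposition \ref{Cansec} together with the discussion preceding the statement, the section $Cansec(C)$ is always a geodesic of $V(C)$ (this is the ``intrinsic'' geodesic property, coming from $\vect{k_g}\perp m$ and $\vect{k_g}\perp\Pt\sigma$). What distinguishes a Darboux curve is the stronger property that $Cansec(C)$ is a geodesic of the ambient $3$-manifold $V(M)$, not merely of the surface $V(C)$. Since $V(M)$ carries the semi-Riemannian metric induced from $\Lambda^4$ and, at a point $\sigma=km+n$, the normal direction to $V(M)$ inside $\Lambda^4$ is exactly $\RR\, m$ (because $T_\sigma V(M)=m^\perp$, as noted just after Lemma \ref{contacte}), being a geodesic of $V(M)$ is equivalent to the geodesic acceleration vector $\vect{k_g}$ of $Cansec(C)$ being proportional to $m$.

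First I would recall \eqref{geodesic_cansec}, namely
\[
\vect{k_g} \;=\; \psi(s)\,m \;+\; \frac{1}{\tau_g^{2}}\,(k_n' +\tau_g k_g)\,T,
\]
valid at points where $C$ is not tangent to a principal direction (so $\tau_g\neq 0$ and the arc-length parametrization of $Cansec(C)$ in $\Lambda^4$ is legitimate, by \eqref{geodesic_torsion_can}). Since $T$ is orthogonal to $m$, $n$, $N_1$ and hence independent of $m$ in $\ll^5$, the vector $\vect{k_g}$ is proportional to $m$ if and only if the coefficient of $T$ vanishes, i.e.\ if and only if
\[
k_n' +\tau_g k_g = 0 .
\]
This proves the second equivalence of the theorem. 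For the first equivalence, I would invoke Definition \ref{def:da}: $C$ is a Darboux curve iff at each point its osculating sphere is tangent to $M$. But by the construction of the canonical section, $Cansec(C)(m)$ is, by definition, the tangent saddle-contact sphere whose intersection with $M$ is tangent to $C$ at $m$ (or the osculating sphere at principal-tangency points), and by Lemma \ref{contacte} it is automatically the osculating sphere to the space curve $C$ precisely when $\sigma=Cansec$ stays inside the boundary $\mathcal O$ of $V(M)$ along $C$; geodesics of $V(M)$ that start tangent to this picture detect exactly that. Concretely, being a geodesic of $V(M)$ forces $\vect{k_g}\parallel m$, which by \eqref{geodesic_cansec} forces $k_n'+\tau_g k_g=0$, and then one checks that the canonical sphere $\Sigma_{C'}(m)$ has contact of order $\ge 3$ with $C$, hence is the osculating sphere of $C$ and is tangent to $M$ — which is the Darboux condition; conversely if the osculating sphere of $C$ is tangent to $M$ at every point it must coincide with $Cansec(C)$, so $Cansec(C)$ lies on $\mathcal O=\partial V(M)$ and, being already a geodesic of $V(C)$, is a geodesic of $V(M)$.

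The steps, in order: (1) restate that $Cansec(C)$ is always a geodesic of $V(C)$ and that the ambient normal to $V(M)$ at $\sigma=km+n$ is $\RR m$; (2) use \eqref{geodesic_cansec} and the $\ll^5$-orthogonality of $T$ to $m$ to conclude $\vect{k_g}\parallel m \iff k_n'+\tau_g k_g=0$; (3) identify ``$Cansec(C)$ is a geodesic of $V(M)$'' with ``$\vect{k_g}\parallel m$''; (4) match ``$Cansec(C)$ is a geodesic of $V(M)$'' with the Darboux condition of Definition \ref{def:da} via Lemma \ref{contacte}, handling the principal-tangency points as a limiting/removable case. I expect step (4) — pinning down exactly why the geodesic-in-$V(M)$ condition is equivalent to the osculating sphere of the space curve $C$ being tangent to $M$, rather than just to the order-$\ge 2$ contact that every $Cansec$ enjoys — to be the delicate point; everything else is a direct reading of the already-established formulas, and the singular behaviour at points where $C$ is tangent to a principal direction must be addressed by continuity since $\tau_g$ vanishes there and \eqref{geodesic_cansec} degenerates.
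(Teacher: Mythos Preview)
Your steps (1)--(3) are correct and coincide with the paper's argument: the paper deduces the equivalence ``$Cansec(C)$ geodesic in $V(M)$ $\iff$ $k_n'+\tau_g k_g=0$'' directly from \eqref{geodesic_cansec} together with the observation that the normal line to $V(M)\subset\Lambda^4$ at $\sigma=km+n$ is $\RR\,m$.

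Your step (4), however, contains a genuine error in the converse direction. You write that if $C$ is Darboux then $Cansec(C)$ ``lies on $\mathcal O=\partial V(M)$''. This conflates two different notions of osculating sphere. The boundary $\mathcal O$ consists of the osculating spheres \emph{of the surface $M$} (curvatures $k_1,k_2$), whereas the Darboux condition concerns the osculating sphere \emph{of the space curve $C$}. When $C$ is Darboux and not tangent to a principal direction, the osculating sphere of $C$ has curvature $k_n\in(k_1,k_2)$ and is an \emph{interior} point of $V(M)$, not a boundary point; so the sentence ``$Cansec(C)\subset\partial V(M)$, and being a geodesic of $V(C)$ it is a geodesic of $V(M)$'' is both false in its premise and a non sequitur in its conclusion.

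The correct route for the first equivalence --- and the one the paper takes --- is via the \emph{drill} characterization recalled just before the theorem: $Cansec(C)$ is a geodesic of $V(M)$ iff $\vect{k_g}$ is proportional to the light-like vector $m$, i.e.\ iff $Cansec(C)$ is a drill; and for a drill, the spheres $\sigma(s)$ are exactly the osculating spheres of the curve singled out by the light-like direction $\vect{k_g}$, which here is $m(s)=C$. Thus ``geodesic in $V(M)$'' is equivalent to ``$Cansec(C)(m)$ is the osculating sphere of $C$ (contact $\ge 3$) and is tangent to $M$'', which is the Darboux condition. Your forward half of (4) is essentially this; for the converse you should argue symmetrically: if the osculating sphere of $C$ is tangent to $M$ it must equal $Cansec(C)(m)$ (uniqueness of the tangent sphere with contact $\ge 2$), hence $Cansec(C)(m)$ has contact $\ge 3$ with $C$, and then Lemma~\ref{contacte} and the same computation that produced \eqref{geodesic_cansec} force the $T$-coefficient to vanish, i.e.\ $\vect{k_g}\parallel m$.
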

\begin{rema}
The light rays of $V(M)$ are also geodesics. Segments of geodesics 
of $V(M)$ which are not tangent to light rays define an arc of 
curve $C\subset M$, and therefore, when $\vect{k_g}$ is light-like,
 $C$ is a piece of Darboux curve on $M$.

Also when $C$ is a Darboux curve  the  only   cuspidal edge  of 
the envelope  $CanCan(C)$ is $C$. See \cite{Tho}.

\end{rema}

\begin{defi}\label{meilleure_sphere}
We denote by $\Sigma_{m,\ell}$ the sphere tangent to the surface $M$
 at the point $m$ such than one branch of the intersection $\Sigma_{\Pt c} \cap M$
 is tangent to the direction $\ell$ ({\it canonical} sphere associated
 to the direction $\ell$).  We will also use the notation $\Sigma_{m,v}$ when 
the direction $\ell$ is generated by a non-zero vector $v$.
\end{defi}

Given a curve $C\subset M$ such that the tangent vector to $C$ at $c(t)$
 is contained in $\ell$ , $\Sigma_{c(t),\Pt c(t)}$ is, among the spheres
 tangent to $M$ at $m$, the one which have the best contact at $m$ with the curve $C$.

\begin{rema}
$\tau_g ds$ is the differential of the  rotation of the sphere 
$\Sigma_{c(t),\Pt c(t)}$ along the curve $C$.
\end{rema}

\begin{demo}
% \ron {``proof"}
The sphere $\Sigma_{c(t),\Pt c(t)}$ has a tangent movement which
 is a rotation of ``axis" the characteristic circle of the family,
 which is tangent to $C$. It is tangent to the tangent plane. We see
 that changing the sphere tangent to the surface along $C$ changes the 
``pitch" term but not the ``roll" term.
\end{demo}

\begin{prop}
Let $C$ be a curve contained in the surface $M\subset ß^3$.
 Let $\gamma_C$ be the curve in
$\Lambda^4$ obtained considering at each point $m\in C$ the
 sphere $\Sigma_{m,\ell(m)}$, where $\ell(m)$ is the direction 
tangent at $m$ to $C$. Suppose that the curve $C$ is nowhere 
tangent to a principal direction of curvature. Then the curve $\gamma_C$ is
 space-like with at every point a space-like geodesic acceleration. Moreover
 the curve $C$ is one fold of the singular locus of the canal surface defined by $\gamma_C$.
\end{prop}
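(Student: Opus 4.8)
The key observation is that the hypothesis ``$C$ nowhere tangent to a principal direction'' makes $\gamma_C$ coincide with the canonical section of $V(C)$. Indeed, at each $m\in C$ the direction $\ell(m)=T_mC$ is then non-principal, so the canonical sphere $\Sigma_{m,\ell(m)}$ of Definition~\ref{meilleure_sphere} is well defined and is precisely the sphere $\Sigma_{C'}(m)$ used to build $Cansec(C)$; hence $\gamma_C=Cansec(C)$ as a curve in $\Lambda^4$, and the whole statement follows from Proposition~\ref{Cansec} together with the Darboux-frame computation following it. Throughout I would write $\sigma=k_n\,m+n$ for the point of $\gamma_C$ over $m$, with $k_n$ the normal curvature of $C$ at $m$, $n$ the unit normal to $C$ tangent to $M$, and $T$ the unit tangent to $C$; recall that, by Lemma~\ref{contacte}, $\sigma$ is orthogonal to $m$, $\Pt m$ and $\Ppt m$.

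\emph{Space-likeness.} Formula~\eqref{geodesic_torsion_can} gives $|\sigma'|=|\tau_g|$, where $\tau_g$ is the geodesic torsion of $C\subset M$. Since the geodesic torsion of a curve on a surface vanishes precisely at its principal directions (classically $\tau_g=(k_2-k_1)\sin\alpha\cos\alpha$, with $\alpha$ the angle between $T$ and the $k_1$-direction), the hypothesis forces $\tau_g\ne 0$ along $C$; hence ${\cal L}(\sigma')=\tau_g^{2}>0$, so $\gamma_C$ is space-like and may be reparametrized by its arc-length in $\Lambda^4$.

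\emph{Space-like acceleration and cuspidal edge.} In that parametrization, formula~\eqref{geodesic_cansec} reads $\vect{k_g}=\psi(s)\,m+\tau_g^{-2}(k_n'+\tau_g k_g)\,T$. Since $m$ is light-like, $T$ is space-like of unit length, and ${\cal L}(m,T)=0$ (differentiate ${\cal L}(m,m)=0$ along $C$), this gives
\[
{\cal L}(\vect{k_g})=\tau_g^{-4}\,(k_n'+\tau_g k_g)^{2}\ \ge\ 0 ,
\]
so $\vect{k_g}$ is never time-like, and it is strictly space-like off the locus $k_n'+\tau_g k_g=0$, i.e.\ off the Darboux curves of $M$ (Theorem~\ref{geodesic_Darboux_curves}); that $\vect{k_g}$ lies in $T_mV(M)$ is Proposition~\ref{Cansec}(i). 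The last assertion is Proposition~\ref{Cansec}(iii): the contact of order $\ge 2$ between $C$ and $\Sigma_{m,\ell(m)}$ (Lemma~\ref{contacte}), together with the tangency to $C$ of the characteristic circles of $\gamma_C$ (the remark following~\eqref{length_section_cansec}), exhibits $C$ as a cuspidal edge of the envelope $CanCan(C)$, that is, as one fold of its singular locus.

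In short, once $\gamma_C$ is identified with $Cansec(C)$, everything is read off from equations~\eqref{geodesic_torsion_can}--\eqref{length_section_cansec} and Proposition~\ref{Cansec}. The only inputs genuinely needed beyond that are the classical fact that geodesic torsion vanishes exactly along principal directions --- which is what converts the hypothesis into $\tau_g\ne 0$ --- and a coherent orientation choice: one fixes a smooth unit normal $n$ along $C$ and checks that $\sigma=k_n m+n$ is the section of $V(C)$ whose intersection branch with $M$ is tangent to $C$, and not its twin on the parallel light ray of spheres tangent to $M$ over $m$. I expect this sign bookkeeping to be the only point requiring real care.
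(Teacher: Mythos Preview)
Your proof is correct and follows the same route as the paper: both identify $\gamma_C$ with $Cansec(C)$ and read the claims off Proposition~\ref{Cansec} together with the Darboux-frame formulas~\eqref{geodesic_torsion_can}--\eqref{geodesic_cansec}; the paper's own argument is terser, addressing only the singular-locus claim (the characteristic circle is tangent to $C$) and leaving the space-likeness and acceleration claims implicit in the preceding computations. One small slip: you describe $n$ as ``the unit normal to $C$ tangent to $M$'', but in the paper's Darboux frame that is $N_1$; the vector $n$ is the unit normal to $M$ --- this does not affect your argument, since you invoke the formulas correctly, and your observation that $\vect{k_g}$ degenerates to light-like precisely on Darboux curves is a fair caveat to the statement as written.
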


\begin{demo}
% \ron {``proof"}
The condition defining the canonical sphere $\Sigma_{c(t),\Pt c(t)}$ guarantees
 that the characteristic circle $CC(t)$ is tangent to $C$, which is therefore the singular curve of the canal.
\end{demo}
\begin{prop}
The condition of the equation in the theorem (\ref{geodesic_Darboux_curves})
 is equivalent to the fact that one branch of the intersection of the sphere
 $\Sigma_{c(t),\Pt c}$ has at the point the same geodesic curvature as the curve $C$.
\end{prop}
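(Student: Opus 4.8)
\emph{Proof plan.}
Fix $m=c(t)$, assumed non-umbilic and such that $C$ is not tangent there to a principal direction; let $\alpha$ be the angle that $\Pt c$ makes at $m$ with the $k_1$-principal direction, so by Euler's formula the normal curvature of $C$ at $m$ is $k_n=k_1\cos^2\alpha+k_2\sin^2\alpha$, which lies \emph{strictly} between $k_1$ and $k_2$. Hence the canonical sphere $\Sigma:=\Sigma_{m,\Pt c}$, corresponding to $\sigma=k_n\,m+\vect n$, has a saddle contact with $M$, so $\Sigma\cap M$ consists near $m$ of two smooth branches meeting transversally (Proposition \ref{angle_intersec}); let $B$ be the branch tangent to $C$ at $m$. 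Two observations drive the proof. First, $\Sigma$ depends only on $m$ and on the tangent direction of $C$ at $m$, hence $\Sigma=\Sigma_{m,\Pt B}$ as well. Second, $B$ is contained in $\Sigma$, so $\Sigma$ has contact of order $\ge 3$ with $B$ at $m$; being also tangent to $M$ at $m$ by construction, $\Sigma$ is the osculating sphere of $B$ at $m$ and it is tangent to $M$ there, that is, $m$ is a Darboux point of the curve $B$.

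The plan is then to apply Theorem \ref{geodesic_Darboux_curves}, pointwise at $m$, to both $C$ and $B$. For $C$ the theorem says that $k_n'+\tau_g k_g=0$ holds at $m$ exactly when $m$ is a Darboux point of $C$. For $B$, the second observation above says $m$ \emph{is} a Darboux point, so the theorem gives $k_n^{B\,\prime}+\tau_g^{B}\,k_g^{B}=0$ at $m$, the superscript $B$ meaning: computed along $B$. Since $B$ and $C$ have the same unit tangent at $m$, their normal curvatures there coincide by Euler's formula, $k_n^{B}(m)=k_n(m)$, and so do their geodesic torsions, $\tau_g$ being a function of the point and the direction only, equal to $(k_2-k_1)\sin\alpha\cos\alpha$; write $\bar k_g:=k_g^{B}(m)$ for the geodesic curvature of $B$ at $m$.

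The only computational step I would carry out is the comparison of $k_n^{B\,\prime}(m)$ with $k_n'(m)$. Regarding $k_n$ as a function on the unit tangent circle bundle of $M$ and differentiating it along a curve $\gamma$ through $m$ tangent to $T$, one gets $\frac{d}{ds}k_n\big|_m=\big(\text{a term depending only on }(m,T)\big)+\frac{\partial k_n}{\partial\theta}\big|_T\cdot\theta_\gamma'(0)$, where $\theta$ is the angle to the $k_1$-direction and $\theta_\gamma'(0)=k_g^{\gamma}(m)-\omega(T)$, with $\omega(T)$ the turning rate of the principal moving frame in the direction $T$ — a quantity common to $C$ and $B$. Using the classical identity $\frac{\partial k_n}{\partial\theta}\big|_T=2\tau_g$, subtracting the two instances cancels the common terms and leaves $k_n^{B\,\prime}(m)-k_n'(m)=2\tau_g(\bar k_g-k_g)$. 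Substituting this into $k_n^{B\,\prime}+\tau_g\bar k_g=0$ and rearranging gives
\[
k_n'(m)+\tau_g k_g=3\,\tau_g\,(k_g-\bar k_g),
\]
and since $\tau_g\neq 0$ away from umbilics and principal directions, the equation of Theorem \ref{geodesic_Darboux_curves} holds at $m$ if and only if $\bar k_g=k_g$, i.e. if and only if the branch $B$ of $\Sigma_{m,\Pt c}\cap M$ has at $m$ the same geodesic curvature as $C$. This is precisely the claim.

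\emph{Where the difficulty lies.} The subtle point is the bookkeeping in the last step: isolating inside $k_n'$ the genuinely curve-dependent piece $2\tau_g k_g^{\gamma}$, verifying that the moving-frame contribution is identical for $B$ and $C$, and applying the identity $\partial_\theta k_n=2\tau_g$ with the correct sign. One must also keep the degenerate configurations out of the way: at an umbilic, or where $C$ runs along a principal direction, $\tau_g=0$ and $\Sigma\cap M$ is cuspidal rather than a transverse pair of branches, so there the statement is empty. As a consistency check, the displayed identity also explains the coefficient $3$ appearing in the differential equation of Darboux curves written in the proof of Proposition \ref{caract_Dupin_via_Darboux}; alternatively, the whole comparison can be carried out by hand by writing $M$ and $\Sigma$ as graphs in a Monge chart with axes along the principal directions at $m$, solving for the branch $B$ to second order, and reading off its geodesic curvature.
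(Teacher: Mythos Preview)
Your argument is correct. The key computation---that for two curves on $M$ sharing the same unit tangent at $m$ one has $k_n^{B\,\prime}-k_n'=2\tau_g(\bar k_g-k_g)$, obtained from $\partial_\theta k_n=2\tau_g$ and the Liouville formula $\theta'=k_g-(k_g^1\cos\theta+k_g^2\sin\theta)$---is right, and feeding it into the Darboux relation for $B$ (which holds because $B\subset\Sigma$ makes $\Sigma$ an osculating sphere of $B$ tangent to $M$) gives exactly $k_n'+\tau_g k_g=3\tau_g(k_g-\bar k_g)$, whence the equivalence when $\tau_g\neq 0$.

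This is a genuinely different route from the paper's. The paper argues synthetically via contact order: the Darboux condition at $m$ says the osculating sphere of $C$ is tangent to $M$, hence coincides with $\Sigma_{m,\Pt c}$, hence $C$ has contact $\ge 3$ with $\Sigma$; since $B$ is one branch of $\Sigma\cap M$ and the other branch is transverse to $C$, contact $\ge 3$ of $C$ with $\Sigma$ is the same as contact $\ge 2$ of $C$ with $B$ inside $M$, i.e.\ equality of second-order jets on $M$, i.e.\ $k_g=\bar k_g$. That argument is shorter and coordinate-free but, as written in the paper, leaves the last step (``contact $\ge 3$ with $\Sigma$ $\Leftrightarrow$ contact $\ge 2$ with $B$ in $M$'') to the reader. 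Your computational approach is longer but entirely explicit, and it has the bonus of producing the identity $k_n'+\tau_g k_g=3\tau_g(k_g-\bar k_g)$, which makes transparent both the remark following the proposition ($k_g$ of the branch equals $-k_n'/\tau_g$) and the factor $3$ in the Darboux differential equation of Proposition~\ref{prop:dpc}.
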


\begin{demo}
As the sphere $\Sigma_{c(t),\Pt c}$ is osculating the curve $C$ when it
 is a Darboux curve, the contact order with the curve should be at least $3$.
\end{demo}

When the direction $\ell$ defined by $\Pt c(t)$ is not a principal direction, 
there is a unique sphere tangent to $M$ which has contact with $C$ of best 
order, the other having contact of order one with $C$. There is only one 
such sphere, in the pencil containing the osculating circle to $C$ which
 is tangent to $M$. This last sphere should therefore be
$\Sigma_{c(t),\Pt c(t)}$

\begin{rema}
The previous considerations imply that the geodesic curvature of the
 branch of $S_{\alpha}\cap M$ tangent  to the   Darboux curve is equal to:
$k_g = \frac{-k'_n}{\tau_g}$.
\end{rema}

%{\vskip 5 mm \Large \noindent la fonction $(k_1 -k_2)^2 (k'_n +k_g \tau_g )$ est-elle un invariant conforme d'une courbe tracée sur $M$?
%%%%

%
\section{Darboux curves in cyclides: a geometric viewpoint}\label{sc:3}
%%%%%%%---------------------------------
   We can describe $V(M)$ when the surface  $M$ is a regular Dupin 
cyclide (see Figure \ref{VMDupin}). It is the wedge of the two circles
 formed by the osculating spheres of the regular cyclide.

\begin{figure}[ht]
 \begin{center}
\psfrag{so1}[h][r][0.8][0]{\hskip 1cm Osculating Spheres  ($k_1$)}
\psfrag{so2}[h][r][0.8][0]{\hskip 1cm Osculating Spheres  ($k_2$)}
\psfrag{sm}[h][r][0.8][0]{\hskip 1cm mean spheres}
\includegraphics[scale=0.4]{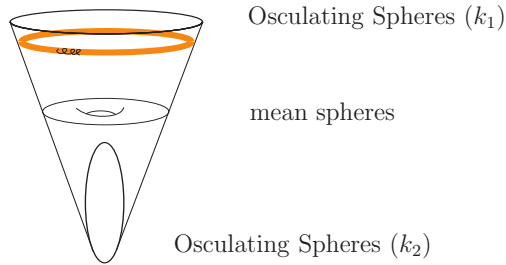}
\caption{Darboux curves in  $V(M)$ when  $M$ is a regular  Dupin cyclide \label{VMDupin}}
\end{center}
 \end{figure}
%

%
%\section{Darboux curves in cyclides}
%%%%%%%---------------------------------
Consider a surface $ M$ with principal foliations ${\mathcal P}_1$
 and ${\mathcal P}_2$ and umbilic set $\mathcal U$. The triple 
 ${\mathcal P}=({\mathcal P}_1, {\mathcal P}_2,  \mathcal U)$ will
 be referred as the {\ em principal configuration} of the surface.

\begin{defi}
For each angle $\alpha\in (-\pi/2,\pi/2)$ we can consider the  
foliations  $\fol_{\alpha}^{+} $ and  $\fol_{\alpha}^{-} $ such 
the leaves of this foliation are the curves making a constant
 angle $±\alpha$ with the leaves of the principal foliation ${\mathcal P}_1$.

In other words, the normal curvature of a leaf  of $\fol_{\alpha}$ is precisely $k_n(\alpha)=k_1\cos^2\alpha+k_2\sin^2\alpha$.
\end{defi}

Let us, for later use, define the surfaces $M_{\alpha} \subset V(M)$ as
 the set of spheres tangent to $M$ at a point $m$ having curvature 
$k_{\alpha}=k_n(\alpha)=k_1 \cos^2\alpha  +k_2 \sin^2\alpha$. Each 
surface $M_{\alpha}$ is foliated by the lifts of the curves of
 $\fol_{\alpha}$; we call this new foliation $\tilde{\fol}_{\alpha}$.
 These later foliation form a foliation of $V(M)$ that we call $\tilde{\fol}$.
\begin{prop}\label{prop:dcyclides}
The  Darboux curves in  Dupin cyclides are the leaves of the foliations
 $\fol_{\alpha}^+$ and  $\fol_{\alpha}^-$.
\end{prop}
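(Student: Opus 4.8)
The plan is to combine Proposition~\ref{caract_Dupin_via_Darboux} with the explicit description of the Darboux equation on a Dupin cyclide, arguing separately at umbilic points (of which Dupin cyclides have none once we pass to a conformal representative free of umbilics) and at regular points. First I would recall from Proposition~\ref{caract_Dupin_via_Darboux} that, in a surface free of umbilic points, \emph{every} leaf of $\fol_{\alpha}^+$ and $\fol_{\alpha}^-$ is a Darboux curve precisely when the surface is conformal to a Dupin cyclide; since a regular Dupin cyclide is free of umbilic points (its two principal curvatures never coincide, as witnessed by the two circles of osculating spheres in $V(M)$ being disjoint), this already gives one inclusion: each leaf of each $\fol_{\alpha}^{\pm}$ is a Darboux curve. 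So the content that remains is the \emph{converse}: every Darboux curve on a Dupin cyclide is a leaf of some $\fol_{\alpha}^{\pm}$, i.e.\ the Darboux curves fill out exactly the union over $\alpha$ of the leaves of the $\fol_{\alpha}^{\pm}$.

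For that converse I would use the angle-drift form of the Darboux equation quoted inside the proof of Proposition~\ref{caract_Dupin_via_Darboux},
\[
3(k_1-k_2)\sin\alpha\cos\alpha\,\frac{d\alpha}{ds}
= \frac{1}{\sqrt{E}}\frac{\partial k_1}{\partial u}\cos^3\alpha
+ \frac{1}{\sqrt{G}}\frac{\partial k_2}{\partial v}\sin^3\alpha,
\]
where $\alpha$ is the angle the Darboux curve makes with the principal foliation ${\mathcal P}_1$. On a Dupin cyclide, by Proposition~\ref{prop02canal} (Section~\ref{sc:8}), in a principal chart one has $\partial k_1/\partial u \equiv 0$ and $\partial k_2/\partial v \equiv 0$, so the right-hand side vanishes identically. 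Since $k_1\ne k_2$ everywhere on a regular cyclide and $\sin\alpha\cos\alpha\ne 0$ away from $\alpha\in\{0,\pi/2\}$, the equation forces $d\alpha/ds=0$ along any Darboux curve not tangent to a principal direction; hence $\alpha$ is constant, i.e.\ the Darboux curve is a leaf of $\fol_{\alpha}^{+}$ or $\fol_{\alpha}^{-}$ for that constant value of $\alpha$. The degenerate values $\alpha=0$ and $\alpha=\pi/2$ correspond to the principal curvature lines themselves, which are Darboux curves on a cyclide and are the leaves of $\fol_0$ and $\fol_{\pi/2}$; these must be treated as a separate (easy) case since there the coefficient $\sin\alpha\cos\alpha$ vanishes and the drift equation degenerates — but along a principal line $\tau_g\equiv 0$ and $k_n'=k_i'\equiv 0$ on a cyclide, so equation~\eqref{eq:eqdd} is satisfied automatically.

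The only real obstacle I anticipate is bookkeeping rather than substance: one must make sure the "angle $\alpha$" used in the drift equation is genuinely well defined along the whole Darboux curve (it is, as long as the curve stays away from principal directions, which the above shows it does once started so), and one must invoke the characterization of Dupin cyclides by $\partial k_1/\partial u = \partial k_2/\partial v = 0$ from Section~\ref{sc:8}, which is legitimate since that proposition is stated earlier. A secondary point to handle carefully is conformal invariance: Darboux curves, the principal foliations, and the angle-making foliations $\fol_{\alpha}^{\pm}$ are all conformally defined (the former two as noted after Proposition~\ref{angle_intersec}, and $\fol_{\alpha}^{\pm}$ because the normal-curvature condition $k_n(\alpha)=k_1\cos^2\alpha+k_2\sin^2\alpha$ transforms compatibly), so it suffices to verify the statement for the standard representative of each of the three types A), B), C) of Dupin cyclide, and in all three cases the principal curvature functions depend on a single principal coordinate, so Proposition~\ref{prop02canal} applies directly and closes the argument.
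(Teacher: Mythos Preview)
Your argument is correct, but it takes a genuinely different route from the paper's own proof in subsections~\ref{sus:1} and~\ref{sus:2}. The paper argues \emph{geometrically}, by symmetry: on a revolution cylinder, the rotation of angle $\pi$ about the line through a point $m$ of a helix and normal to the cylinder preserves both the helix and the cylinder, hence must preserve the osculating sphere at $m$; this forces that sphere to have its diameter along the normal line and therefore to be tangent to the cylinder. For a regular cyclide in $\bs^3$, realized as the tube around a geodesic, the paper composes the two reflections in the spheres through $m$ and each core geodesic, and the same invariance argument shows the osculating sphere of a ``helix'' is tangent to the cyclide. Having shown that all helices are Darboux, the paper then remarks that there are enough of them (one through every non-principal direction at every point) to account for all Darboux curves.

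Your approach instead feeds the characterization $\partial k_1/\partial u=\partial k_2/\partial v=0$ of Dupin cyclides into the angle-drift equation~\eqref{eq:dpc} to get $d\alpha/ds=0$ directly; this is precisely the mechanism behind Proposition~\ref{caract_Dupin_via_Darboux}, so in effect you are observing that Proposition~\ref{prop:dcyclides} is the ``if'' direction of that later characterization. What your route buys is economy and a clean converse (every Darboux curve has constant $\alpha$), at the cost of invoking the differential equation from Section~\ref{sc:4} and the canal-surface characterization from Section~\ref{sc:8}; what the paper's route buys is a self-contained synthetic argument that does not rely on those later sections and makes the tangency of the osculating sphere visible as a consequence of symmetry. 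Two minor remarks on your write-up: the relevant input is really that a Dupin cyclide is a canal in \emph{both} ways (so $\theta_1=\theta_2=0$, i.e.\ Proposition~\ref{prop01canal} applied twice), rather than Proposition~\ref{prop02canal} per se; and your final paragraph on conformal invariance is not needed, since the vanishing of $\theta_1$ and $\theta_2$ is itself conformally invariant and already gives $\partial k_1/\partial u=\partial k_2/\partial v=0$ in any principal chart on any Dupin cyclide.
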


The proof of Proposition \ref{prop:dcyclides} is given in   subsections \ref{sus:1} and  \ref{sus:2}.
\subsubsection{Cylinders in $\RR^3$}\label{sus:1}
%%------------------------------------
This is the easiest case to visualize. Cylinders are models for cyclides having exactly one singular point.

The definition of a Darboux curve requires that the osculating
 sphere to the helix  $\cal H$ drawn on the revolution cylinder
be tangent to it. This comes from the fact that the helix is
 invariant by the rotation  $R$ of angle  $\pi$ and axis equal to the
 principal curvature vector of the helix which is a vector
 orthogonal to the cylinder and going along its axis of 
revolution. As the osculating sphere should also be invariant
 by this rotation its diameter in contained in the line normal 
to the cylinder which is the axis of the rotation. Therefore the sphere is tangent to the cylinder.

\subsubsection{Regular cyclides in $ß^3$ }\label{sus:2}
%%-----------------------------
Each regular Dupin cyclide $M$ is, for a suitable metric of constant
 curvature $1$, the tubular neighborhood of a geodesic ${\mathcal G}_1$
 of $ß^3$ (see \cite{La-Wa1}). Seeing $ß^3$ as the unit sphere of
 an euclidian space $\EE^4$ of dimension $4$, it is the intersection 
of a $2$-plane $P_1$ with $ß^3$. Let $P_2$ be the plane orthogonal 
to $P_1$ in $\EE^4$. Then it is also a tubular neighborhood of the
 geodesic ${\mathcal G}_2 = P_2 \cap ß^3$.
We can define (for this metric) the symmetries with respect to the two
  spheres containing  respectively $m$ and
${\mathcal G}_1$ and $m$ and ${\mathcal G}_2$.
Let ${\cal R} $ be the composition of these symmetries.
Then $\cal R$ preserves the cyclide but also the ``helices" on the cyclide.
It should therefore also preserve the osculating sphere to the helix, 
which, as it is not normal to the cyclide, has to be tangent to it.
The ``helix" is therefore a Darboux curve. There are enough ``helices"
 to be sure we got all the Darboux curves.

The last family of Dupin cyclides is formed of conformal images of 
cones of revolution (see \cite{La-Wa1}).
On can deal with cones  of revolution as we did with regular cyclides,
 using a sphere orthogonal to the axis and $m$ (it belongs to  the 
pencil whose limit points are the singular points) and a sphere containing the axis and $m$.

%%%
%%%
%%%
We will, using the general dynamical properties of Darboux curves, prove (see Proposition \ref{caract_Dupin_via_Darboux}) that a surface is a Dupin cyclide 
if and only if its Darboux curves are the leaves of the foliation $\tilde{\fol}$ of $V(M)$.

\section{Differential Equation of Darboux Curves in a Principal Chart}\label{sc:4}
%%%%%%%%%%%%%%%%%%%%%%%%%%%%%%%%%%%%%%%%%%%%%%%%%%%%%%%%%%%%%%%%%

Consider a local principal  chart $(u,v)$ in a surface $\mathbb
M\subset\mathbb R^3$. The first and second fundamental forms are
denoted by
$$I=Edu^2+ Gdv^2,\;\;\; \;\;\; II=I=edu^2+ gdv^2 $$
and the principal curvatures are $k_1=e/E$ and $k_2=g/G$.

\begin{prop} \label{prop:dpc} Let $(u,v)$ be a principal chart. Let $c$ be
a curve parametrized by arc length $s$ making an angle $\alpha(s)$
with the principal direction $(1,0)$. For a Darboux line $c$ the
following differential equation is verified

\begin{equation}\label{eq:dpc} \aligned  3(k_1-k_2)\sin\alpha\cos\alpha\;\frac{d\alpha}{ds} =& \frac{1}{\sqrt{E}} \frac{\partial k_1}{\partial u}
 \cos^3\alpha+ \frac{1}{\sqrt{G}}\frac{\partial k_2}{\partial v}
\sin^3\alpha\\
=&  \frac{\partial k_1}{\partial  s_1}
 \cos^3\alpha+  \frac{\partial k_2}{\partial s_2}
\sin^3\alpha.\endaligned \end{equation}
Here $(u^\prime, v^\prime)=(\frac{\cos\alpha}{\sqrt{E}}, \frac{\sin\alpha}{\sqrt{G}} )$.
\end{prop}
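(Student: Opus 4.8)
The plan is to combine the geometric characterization of Darboux curves obtained in Theorem \ref{geodesic_Darboux_curves} — namely the equation $k_n' + \tau_g k_g = 0$ — with the classical Euler and Bonnet–type formulas for $k_n$, $\tau_g$ and $k_g$ of a curve making angle $\alpha(s)$ with the principal direction $(1,0)$ in a principal chart. First I would write down Euler's formula $k_n = k_1\cos^2\alpha + k_2\sin^2\alpha$ and the companion formula for the geodesic torsion $\tau_g = (k_2 - k_1)\sin\alpha\cos\alpha = \tfrac12(k_2-k_1)\sin 2\alpha$ (with the sign convention fixed by \eqref{geodesic_torsion}). These hold pointwise along the curve, so the dependence on $s$ enters both through $\alpha = \alpha(s)$ and through the point $(u(s),v(s))$ at which $k_1, k_2$ are evaluated.

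Next I would differentiate $k_n$ with respect to arc length, using the chain rule:
\[
k_n' = \frac{\partial k_1}{\partial u}u'\cos^2\alpha + \frac{\partial k_1}{\partial v}v'\cos^2\alpha + \frac{\partial k_2}{\partial u}u'\sin^2\alpha + \frac{\partial k_2}{\partial v}v'\sin^2\alpha + 2(k_2-k_1)\sin\alpha\cos\alpha\,\alpha'.
\]
Here the key simplification is that in a principal chart the Codazzi–Mainardi equations give $\partial k_1/\partial v$ and $\partial k_2/\partial u$ in terms of $(k_1-k_2)$ times logarithmic derivatives of $E$ and $G$; more to the point, the geodesic curvature $k_g$ of the curve is given by the Liouville-type formula
\[
k_g = \frac{d\alpha}{ds} + \kappa_1\cos\alpha + \kappa_2\sin\alpha,
\]
where $\kappa_1 = -\,\dfrac{1}{2\sqrt{E}\,\sqrt{G}}\,\dfrac{\partial E}{\partial v}\cdot\dfrac{1}{\sqrt E}$ and $\kappa_2 = \dfrac{1}{2\sqrt{E}\sqrt{G}}\dfrac{\partial G}{\partial u}\cdot\dfrac1{\sqrt G}$ are the geodesic curvatures of the coordinate lines $v=\text{const}$ and $u=\text{const}$. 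Substituting $k_n'$ and $\tau_g k_g$ into $k_n' + \tau_g k_g = 0$, the $\alpha'$-terms combine: $2(k_2-k_1)\sin\alpha\cos\alpha\,\alpha'$ from $k_n'$ plus $\tau_g\,\alpha' = (k_2-k_1)\sin\alpha\cos\alpha\,\alpha'$ from $\tau_g k_g$, giving a total coefficient $3(k_2-k_1)\sin\alpha\cos\alpha\,\alpha'$, which is exactly (up to sign, and after moving to the other side) the left-hand side $3(k_1-k_2)\sin\alpha\cos\alpha\,\frac{d\alpha}{ds}$ of \eqref{eq:dpc}.

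The remaining task — and the main obstacle — is to check that all the terms not proportional to $\alpha'$ collapse to precisely $\frac{1}{\sqrt E}\frac{\partial k_1}{\partial u}\cos^3\alpha + \frac{1}{\sqrt G}\frac{\partial k_2}{\partial v}\sin^3\alpha$. Using $(u',v') = (\cos\alpha/\sqrt E,\ \sin\alpha/\sqrt G)$, the "diagonal" contributions $\frac{1}{\sqrt E}\frac{\partial k_1}{\partial u}\cos^3\alpha$ and $\frac{1}{\sqrt G}\frac{\partial k_2}{\partial v}\sin^3\alpha$ appear immediately from $k_n'$. The "cross" contributions — $\frac{1}{\sqrt G}\frac{\partial k_1}{\partial v}\cos^2\alpha\sin\alpha$, $\frac{1}{\sqrt E}\frac{\partial k_2}{\partial u}\sin^2\alpha\cos\alpha$, and the terms $\tau_g(\kappa_1\cos\alpha + \kappa_2\sin\alpha)$ — must cancel. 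This is where the Codazzi equations in a principal chart, $\dfrac{\partial k_1}{\partial v} = \dfrac{E_v}{2E}(k_2-k_1)$ and $\dfrac{\partial k_2}{\partial u} = \dfrac{G_u}{2G}(k_1-k_2)$, are exactly what is needed: rewriting $\kappa_1,\kappa_2$ in terms of $E_v, G_u$ shows the cross terms are equal and opposite to the $\tau_g\kappa_i$ terms and drop out. I expect this bookkeeping — tracking signs through Euler's formula, the geodesic torsion convention, the Liouville formula, and Codazzi — to be the only delicate part; everything else is a direct substitution into Theorem \ref{geodesic_Darboux_curves}. Finally, the second equality in \eqref{eq:dpc} is just the definitions $\partial/\partial s_1 = \frac{1}{\sqrt E}\partial/\partial u$, $\partial/\partial s_2 = \frac{1}{\sqrt G}\partial/\partial v$ of arc-length derivatives along the principal directions.
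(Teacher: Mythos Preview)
Your proposal is correct and follows essentially the same route as the paper's own proof: start from $k_n' + \tau_g k_g = 0$, plug in Euler's formula, the geodesic torsion formula, and the Liouville formula for $k_g$, collect the $\alpha'$-terms into $3(k_2-k_1)\sin\alpha\cos\alpha\,\alpha'$, and use the Codazzi equations in a principal chart to kill the cross terms. The paper carries out exactly this computation (with your $\kappa_i$ written as $k_g^i$), citing Struik for the Codazzi identities and the expressions $E_v = -2E\sqrt{G}\,k_g^1$, $G_u = 2G\sqrt{E}\,k_g^2$.
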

\begin{proof} Consider a principal chart $(u,v)$ such that $v=cte$ are
 the leaves of the principal foliation ${\mathcal P}_1$.

Let
$c(s)=(u(s),v(s)) $ be a regular curve parametrized by arc length $s$. 
So we can write  $c^\prime(s)=(u^\prime, v^\prime)=
(\frac{\cos\alpha}{\sqrt{E}}, \frac{\sin\alpha}{\sqrt{G}})$, defining
 a direction $\alpha$ with respect to principal foliation ${\mathcal P}_1$, horizontal foliation.

We have the following classical relations:
$$\aligned k_n(\alpha)=&k_1\cos^2\alpha+k_2\sin^2\alpha\\
k_g=&\frac{d\alpha}{ds}+k_g^1\cos\alpha+k_g^2\sin\alpha\\
\tau_g=& (k_2-k_1) \cos\alpha\sin\alpha. \endaligned $$

Here
$k_g^1=(k_g)_{v=const} $  and $k_g^2=(k_g)_{u=const} $ are the
geodesic curvatures of the coordinates curves.

Therefore,

$$ \aligned \frac{dk_n}{ds}=& \frac{1}{\sqrt{E}} \frac{\partial k_1}{\partial
u} \cos^3\alpha+\frac{1}{\sqrt{G}} \frac{\partial k_1}{\partial v}
\cos^2\alpha\sin\alpha+\frac{1}{\sqrt{E}}  \frac{\partial
k_2}{\partial
u} \cos \alpha\sin^2\alpha\\
+&\frac{1}{\sqrt{G}}  \frac{\partial k_2}{\partial v}
\sin^3\alpha+ 2(k_2-k_1)\cos\alpha\sin\alpha\frac{d\alpha}{ds}\endaligned $$

The differential equation of Darboux curves  is given by $k_n^\prime
+k_g\tau_g=0$ and so it follows that:

$$\aligned
 & [k_g^1(k_2-k_1) +\frac{1}{\sqrt{G}} \frac{\partial
k_1}{\partial v} ]\cos^2\alpha\sin\alpha+
[k_g^2(k_2-k_1)+\frac{1}{\sqrt{E}} \frac{\partial k_2}{\partial u}
]\cos\alpha\sin^2\alpha\\
&+ 3 (k_2-k_1)\cos\alpha\sin\alpha\frac{d\alpha}{ds}+
\frac{1}{\sqrt{E}} \frac{\partial k_1}{\partial u} \cos^3\alpha+
\frac{1}{\sqrt{G}} \frac{\partial k_2}{\partial v} \sin^3\alpha=0
\endaligned $$

In  any orthogonal chart ($F=0$) we have that $G_u= 2 G\sqrt{E}
k_g^2=(k_g)_{u=cte)}$ and $E_v= -2 E\sqrt{G} k_g^1=(k_g)_{v=cte}$.    Also the Codazzi
equations in a principal chart are given by:

$$\frac{\partial k_1}{\partial v}=\frac{E_v}{2E}(k_2-k_1), \;\; \frac{\partial k_2}{\partial
u}=\frac{G_u}{2G}(k_1-k_2).$$

See Struik's book  \cite[pages 113 and 120]{struik}.

Therefore,

$$\aligned    k_g^1(k_2-k_1) +\frac{1}{\sqrt{G}} \frac{\partial k_1}{\partial
v}  &= -\frac{E_v}{2E\sqrt{G}}(k_2-k_1)+\frac{1}{\sqrt{G}}\frac{E_v}{2E}(k_2-k_1)=0\\
  k_g^2(k_2-k_1)+\frac{1}{\sqrt{E}} \frac{\partial k_2}{\partial
u}  &= \;\;  \frac{G_u}{2G\sqrt{E}}(k_2-k_1)+
\frac{1}{\sqrt{E}}\frac{G_u}{2G}(k_1-k_2)=0. \endaligned$$ This
ends the proof.
\end{proof}

\begin{rema}
A curve $c(s)$ has contact of third order with the associated
  osculating sphere, tangent to the surface, when
$$  \langle c^\prime,c^\prime\rangle 
[  2\langle N^\prime,c^{\prime\prime}  \rangle+ \langle N^{\prime\prime},c^{\prime }\rangle]-
3  \langle c^{\prime }, N^\prime\rangle    \langle c^\prime ,c^{\prime
\prime} \rangle=0.$$

This equation can be used to obtain the differential equation of Darboux
 curves in any chart $(u,v)$. See \cite{Sa}.
%Therefore when $c$ is parametrized by arc length, using that $N^\prime =-k_n t(s)-\tau_g(s)
%(N\wedge t)(s)$ and $ t^\prime=k_g( N\wedge t)(s)+k_n(s) N(s),$
%it follows that the differential equation of Darboux curves is the following:
%$k_n^\prime(s)+\tau_g(s) k_g(s)=0.$
%To obtain the differential equation of Darboux curves, Santaló, \cite{Sa},  considered  a similar approach.

\end{rema}

%

%%%%%
%%%%%
\section{A plane-field on $V(M)$}\label{sc:6}
%%%%%%%%%%%%%%%%%%%%%%%%%%%%%%%
The two tangents to the two Darboux orbits through the point
 $(m, \alpha)\in V(M)$ define a plane in $T_{m,\alpha}V(M)$.
 The ensemble of  these planes define a plane-field $\mathcal P$.

\begin{prop}\label{prop:pfdi}
The plane-field $\mathcal P$ is integrable if and only if
 $$(\xi_1)\theta_2=-\frac 16 \theta_1\theta_2,\hskip 2cm (\xi_2)\theta_1= \frac 16 \theta_1\theta_2.$$
Here  $\xi_i$ are the conformal vector fields and $\theta_i$ are the principal conformal curvatures.

 %it is tangent to the foliation of $M$ by the surfaces 
% $M_{\alpha}$. %In this case the surface $M$ is a special canal surfaces.
\end{prop}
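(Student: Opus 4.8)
The plan is to work intrinsically on $V(M)$ using the two conformal vector fields $\xi_1,\xi_2$ that generate the two Darboux directions at each point $(m,\alpha)$, and to apply the Frobenius criterion: the plane-field $\mathcal P = \spa(\xi_1,\xi_2)$ is integrable if and only if the Lie bracket $[\xi_1,\xi_2]$ stays in $\spa(\xi_1,\xi_2)$ at every point. First I would fix, along the lines of Section~\ref{sc:4}, a concrete frame on $V(M)$ adapted to the conformal structure: the base point $m$, the angle coordinate $\alpha$, and a conformal coframe $(\theta_1,\theta_2)$ dual to $(\xi_1,\xi_2)$ together with the principal conformal curvatures $\theta_i$ that measure how fast the conformal frame rotates. (Here one must be careful about the clash of notation in the statement: $\theta_i$ denotes both a coframe and the conformal curvature functions; I would disambiguate by writing the structure equations $d\theta_i = \theta_i\wedge(\text{rotation }1\text{-form})$ explicitly.) The two Darboux vector fields are then obtained from Theorem~\ref{geodesic_Darboux_curves}, i.e. from $k_n' + \tau_g k_g = 0$, rewritten in the $(u,v,\alpha)$-coordinates via Proposition~\ref{prop:dpc}: the $\alpha$-component of each $\xi_i$ is prescribed by $3(k_1-k_2)\sin\alpha\cos\alpha\,\dot\alpha = \partial_{s_1}k_1\cos^3\alpha + \partial_{s_2}k_2\sin^3\alpha$, which is exactly where the coefficient $1/6$ (half of $1/3$, split between the two foliations $\fol_\alpha^+$ and $\fol_\alpha^-$) will enter.

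Next I would compute $[\xi_1,\xi_2]$ in this frame. Writing $\xi_i = a_i\,e_1 + b_i\,e_2 + c_i\,\partial_\alpha$ where $e_1,e_2$ are the horizontal lifts of the conformal frame on $M$ and $c_i$ is the $\alpha$-drift dictated by the Darboux equation, the bracket decomposes into a "horizontal" part governed by the conformal structure equations of $M$ (these involve the conformal curvatures $\theta_i$) and a "vertical" $\partial_\alpha$-part coming from differentiating the drift terms $c_i$. The horizontal part automatically lies in $\spa(\xi_1,\xi_2)$ away from umbilics, since $e_1,e_2$ span the same horizontal distribution as $\xi_1,\xi_2$ do modulo $\partial_\alpha$; so the whole obstruction to integrability is the vertical component of $[\xi_1,\xi_2]$, a single scalar function on $V(M)$. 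Setting that scalar to zero and expanding via the structure equations is what produces the two equations in the statement. I would organize the computation so that the $+$ and $-$ Darboux directions are treated symmetrically; the asymmetry of signs in the two displayed equations, $(\xi_1)\theta_2 = -\tfrac16\theta_1\theta_2$ versus $(\xi_2)\theta_1 = +\tfrac16\theta_1\theta_2$, should fall out from the two sign choices $\pm\alpha$ in $\fol_\alpha^\pm$.

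The main obstacle I expect is bookkeeping rather than conceptual: keeping the conformal normalization consistent so that the numerical coefficient comes out exactly $1/6$. The factor $3$ in Proposition~\ref{prop:dpc} is the source of the $1/3$, and the further halving to $1/6$ must come from how the Darboux $\alpha$-drift splits between the two directions $\xi_1,\xi_2$ (equivalently, from expressing things in terms of the derivative of $\alpha$ along each $\xi_i$ separately rather than along an arbitrary curve). A secondary subtlety is the behaviour at points where the two Darboux directions coincide or degenerate — i.e. near principal directions, where the plane-field $\mathcal P$ may not even be defined — so the statement and proof should be understood on the open set of $V(M)$ where the two Darboux directions are distinct and transverse to the light rays. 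Once the vertical component of $[\xi_1,\xi_2]$ is isolated as $\bigl((\xi_1)\theta_2 + \tfrac16\theta_1\theta_2\bigr)$ and $\bigl((\xi_2)\theta_1 - \tfrac16\theta_1\theta_2\bigr)$ times fixed frame vectors, the Frobenius condition reads exactly as claimed, and the proof is complete.
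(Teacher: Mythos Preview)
Your overall plan---apply the Frobenius criterion to the two Darboux vector fields on $V(M)$ and compute their Lie bracket---is exactly the paper's approach, but several steps of your outline are either confused or genuinely incomplete.

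First, a notational confusion that matters: in the statement, $\xi_1,\xi_2$ are the conformal vector fields \emph{on $M$}, namely $\xi_i=2X_i/(k_1-k_2)$, not the Darboux fields on $V(M)$. The Darboux plane-field is spanned by lifts of the form $\mathcal D_i^c=\xi_i+c_i(\alpha)\,\partial_\alpha$ with $c_1=\tfrac{1}{6}\theta_1\cot\alpha$ and $c_2=\tfrac{1}{6}\theta_2\tan\alpha$. The factor $1/6$ is not a ``halving coming from the split between the two directions''; it is $\tfrac{1}{3}\cdot\tfrac{2}{4}$, the $1/3$ from Proposition~\ref{prop:dpc} and the $2/4$ from the normalizations of $\xi_i$ and $\theta_i$. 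The paper reaches this clean form by first replacing the two Darboux fields $\mathcal D_1,\mathcal D_2$ (related by the involution $\alpha\mapsto-\alpha$) by their sum and difference, whose horizontal parts are purely along $\xi_1$ and $\xi_2$; this is the simplification you are missing and without which the bracket computation is unpleasant.

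Second, your claim that ``the horizontal part automatically lies in the span, so the obstruction is the vertical component of the bracket'' is not correct as stated. The bracket $[\mathcal D_1^c,\mathcal D_2^c]$ has horizontal part $-\tfrac12\theta_2\xi_1-\tfrac12\theta_1\xi_2$ (from the structure equation $[\xi_1,\xi_2]=-\tfrac12(\theta_2\xi_1+\theta_1\xi_2)$, which you never invoke), and these horizontal coefficients force the would-be coefficients in $\mathrm{span}(\mathcal D_1^c,\mathcal D_2^c)$; the obstruction is then the vertical component of the bracket \emph{minus} $-\tfrac12\theta_2 c_1-\tfrac12\theta_1 c_2$, not the vertical component alone.

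Third, and most seriously, you do not explain how a single scalar obstruction on $V(M)$ yields \emph{two} equations on $M$. The point is that the obstruction, once computed, has the $\alpha$-dependence
\[
\frac{-3\bigl[(\xi_1)\theta_2+(\xi_2)\theta_1\bigr]\cos 2\alpha+\bigl[3(\xi_1)\theta_2-3(\xi_2)\theta_1+\theta_1\theta_2\bigr]}{36\sin\alpha\cos\alpha},
\]
and requiring this to vanish for all $\alpha$ in an interval forces both coefficients (of $\cos 2\alpha$ and of $1$) to vanish separately. That is precisely how the two displayed equations arise, and your proposal skips this step entirely.
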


%  \remi{Check after mussels}

\begin{proof}
Consider in the unitary tangent bundle the suspension of Darboux curves.

These curves are defined by the following vector field

$${\mathcal D}_1=\frac{\cos\alpha}{\sqrt{E}}\frac{\partial}{\partial u}+\frac{\sin\alpha}{\sqrt{G}}
\frac{\partial}{\partial v}+
[\frac { \partial {k_1} /{\partial u}}{3\sqrt{E}(k_1-k_2)}\frac{\cos^2\alpha}{\sin\alpha}+
\frac { \partial {k_2} /{\partial v}}{3\sqrt{E}(k_1-k_2)}
\frac{\sin^2\alpha}{\cos\alpha}]\frac{\partial}{\partial \alpha} .$$

Consider the involution $\varphi(u,v,\alpha)=(u,v, -\alpha)$
 and the induced vector field $D_2=\varphi_* (D_1)$.

So it follows that:
$${\mathcal D}_2=\frac{\cos\alpha}{\sqrt{E}}\frac{\partial}{\partial u}-
\frac{\sin\alpha}{\sqrt{G}}\frac{\partial}{\partial v}
-[-\frac { \partial {k_1} /{\partial u}}{3\sqrt{E}(k_1-k_2)}\frac{\cos^2\alpha}{\sin\alpha}+
\frac { \partial {k_2} /{\partial v}}{3\sqrt{E}(k_1-k_2)}
\frac{\sin^2\alpha}{\cos\alpha}]\frac{\partial}{\partial \alpha}.$$

Consider the plane field, {\it Darboux plane field}, defined by $\{{\mathcal D}_1,{\mathcal D}_2\}$.

In the sequence it will be obtained the conditions of integrability of this plane field.

In order to simplify the calculations the following changes will be developed.

First, consider the new pair of vector fields
$\tilde {\mathcal D}_1={\mathcal D}_1+{\mathcal D}_2$ and
 $\tilde {\mathcal D}_2={\mathcal D}_1-{\mathcal D}_2$ and obtain:

$$\aligned \tilde {\mathcal D}_1=& \frac{2\cos\alpha}{\sqrt{E}}\frac{\partial}{\partial u}+
[\frac {2 \partial {k_1} /{\partial u}}{3\sqrt{E}(k_1-k_2)}\frac{\cos^2\alpha}{\sin\alpha}
 ]\frac{\partial}{\partial \alpha} \\
\tilde {\mathcal D}_2=&  \frac{2\sin\alpha}{\sqrt{G}}\frac{\partial}{\partial v}+
[
\frac {2 \partial {k_2} /{\partial v}}{3\sqrt{E}(k_1-k_2)}\frac{\sin^2\alpha}{\cos\alpha}]\frac{\partial}{\partial \alpha}.\endaligned $$

Next consider:
$$\aligned \bar {\mathcal D}_1=& \frac{2 }{\sqrt{E}}\frac{\partial}{\partial u}+
[\frac {2 \partial {k_1} /{\partial u}}{3\sqrt{E}(k_1-k_2)}\frac{\cos \alpha}{\sin\alpha}
 ]\frac{\partial}{\partial \alpha} \\
\bar {\mathcal D}_2=&  \frac{2}{\sqrt{G}}\frac{\partial}{\partial v}+
[
\frac {2 \partial {k_2}/{\partial v}}{3\sqrt{E}(k_1-k_2)}
\frac{\sin \alpha}{\cos\alpha}]\frac{\partial}{\partial \alpha} .\endaligned $$

Consider the unitary vector fields $X_i$, 
 the conformal vector fields $\xi_i$ and the principal conformal curvatures $\theta_i$.

$$\aligned X_1=&\frac{1}{\sqrt{E}}\frac{\partial}{\partial u},
\;\;\;\hskip 2cm X_2= \frac{1}{\sqrt{G}}\frac{\partial}{\partial v}\\
\xi_1=&\frac{2X_1}{k_1-k_2},\;\;\; \hskip 2cm\xi_2=\frac{2X_2}{k_1-k_2}\\
\theta_1=& \frac{4 (X_1)k_1}{{(k_1-k_2)}^2}\;\;\;\hskip 2cm
 \theta_2=  \frac{4 (X_2)k_2}{{(k_1-k_2)}^2}, \;\; X_i(k_i)=D k_i(X_i).
\endaligned$$

Observing that $\frac{ \partial {k_1} /\partial u}{\sqrt{E}}=(X_1)k_1$
 and  $\frac{\partial{ k_2} /\partial v}{\sqrt{G}}=(X_2)k_2 $ we obtain a new base defined by:

$$\aligned   {\mathcal D}_1^c=&  \xi_1+
 \frac 16 \theta_1\frac{\cos \alpha}{\sin\alpha}
 \; \frac{\partial}{\partial \alpha} \\
 {\mathcal D}_2^c=& \xi_2+\frac 16\theta_2
 \frac{\sin \alpha}{\cos\alpha} \;\frac{\partial}{\partial \alpha} .\endaligned $$

 Recall that:

 $$\aligned &[fX,gY]=  fg[X,Y]+(X.g)f Y- (Y.f) g X\\
 &[\xi_1,\xi_2]=   -\frac 12 \theta_2\,\xi_1-\frac 12  \theta_1 \,\xi_2.\endaligned $$

 So it follows that:
{\small
 $$\aligned &[{\mathcal D}_1^c, {\mathcal D}_2^c]=  [\xi_1,\xi_2]+[\xi_1,\frac 16\theta_2
 \frac{\sin \alpha}{\cos\alpha}
 \;\frac{\partial}{\partial \alpha}]+[\frac 16 \theta_1\frac{\cos \alpha}{\sin\alpha}
 \; \frac{\partial}{\partial \alpha}, \xi_2]\\
 +& [\frac 16 \theta_1\frac{\cos \alpha}{\sin\alpha}
 \; \frac{\partial}{\partial \alpha}, \frac 16\theta_2
 \frac{\sin \alpha}{\cos\alpha} \;\frac{\partial}{\partial \alpha}]\\
 =& -\frac 12 \theta_2\,\xi_1-\frac 12 
 \theta_1 \,\xi_2 +\frac 16 (\xi_1)\theta_2\frac{\sin\alpha}{\cos\alpha}\;
\frac{\partial}{\partial \alpha}-
 \frac 16 (\xi_2)\theta_1\frac{\cos\alpha}{\sin\alpha}\;
\frac{\partial}{\partial \alpha}-
\frac{1}{18}\frac{\theta_1\theta_2}{\sin\alpha\cos\alpha} 
\;\frac{\partial}{\partial \alpha}\\
 =& -\frac 12 \theta_2\,\xi_1-\frac 12  \theta_1 \,\xi_2 +
 \frac 16 [ (\xi_1)\theta_2\frac{\sin\alpha}{\cos\alpha} -
 (\xi_2)\theta_1\frac{\cos\alpha}{\sin\alpha} -
\frac{1}{3}\frac{\theta_1\theta_2}{\sin\alpha\cos\alpha} ]\;\frac{\partial}{\partial \alpha}
 \endaligned$$
}
 Now consider the determinant

 $$\aligned \left| \begin{matrix} 1& 0 & \frac 16\theta_1\frac{\cos\alpha}{\sin\alpha}\\
 0& 1& \frac 16\theta_2\frac{\sin\alpha}{\cos\alpha}\\
 -\frac 12 \theta_2& -\frac 12 \theta_1 & \lambda\end{matrix}\right|,\;\;
\lambda=     \frac 16 [ (\xi_1)\theta_2\frac{\sin\alpha}{\cos\alpha} -
 (\xi_2)\theta_1\frac{\cos\alpha}{\sin\alpha}
 -\frac{1}{3}\frac{\theta_1\theta_2}{\sin\alpha\cos\alpha} ]
 \endaligned$$

 Evaluating this determinant it is obtained:

 $$\frac{-3[(\xi_1)\theta_2+(\xi_2)\theta_1]\cos 2\alpha+[
  3 (\xi_1)\theta_2-3(\xi_2)\theta_1  +\theta_1\theta_2 ]}{36\sin\alpha\cos\alpha}$$

 So the integrability conditions are given by:

 $$(\xi_1)\theta_2+(\xi_2)\theta_1=0,\hskip 1cm 3 (\xi_1)\theta_2-3(\xi_2)\theta_1  +\theta_1\theta_2 =0.$$

This is equivalent to:\; $(\xi_1)\theta_2=-\frac 16 \theta_1\theta_2\;$ and $\;(\xi_2)\theta_1= \frac 16 \theta_1\theta_2.$
\end{proof}

\section{Darboux lines near a Ridge Point}\label{sc:7}
%%%%%%%%%%%%%%%%%%%%%%%%%%%%%%%%%%%%%%%%%%%

Consider a surface $ M$ and   a principal chart $(u,v)$ such 
that the horizontal foliation ${\mathcal P}_1$ is that associated to the principal curvature $k_1$.
\begin{defi}
A non umbilic point $p_0=(u_0,v_0)$ is called a {\it ridge} point
 of the principal foliation ${\mathcal P}_1$ if $\frac{\partial k_1}{\partial u}(p_0)=0$.
\end{defi}
The  ridges for the principal foliation ${\mathcal P}_2$ are 
characterized by $\frac{\partial k_2}{\partial v}(p_0)=0$.
\begin{defi}
A ridge point relative to  ${\mathcal P}_1$ is called  {\it zigzag},
 respectively {\it beak to beak},  when
 $\sigma_1(p_0)=\frac{\partial^2 k_1}{\partial u^2}(p_0)/( k_1(p_0) - k_2(p_0))  <0,$
 respectively,  $\sigma_1(p_0)>0$.
% zigzag para folheacao 1 e quando sigma <0 (conferido em 21/06/2009)
%  

A ridge point $p_0$ relative to  ${\mathcal P}_2$ is called 
 {\it zigzag}, respectively {\it beak to beak},  when 
$\sigma_2(p_0)=\frac{\partial^2 k_2}{\partial v^2}(p_0)/ (k_2(p_0)-k_1(p_0))<0,$
 respectively $\sigma_2(p_0)>0$.
% zigzag para folheacao e quando sigma  0 (conferido em 21/06/2009)
\end{defi}

The ridge points are associated to inflections of the principal curvature
 lines, to  the singularities of the focal set of the surface and 
also with the singularities of  the boundary of $V(M)$, the space
 of spheres tangent to $M$. See \cite{Po} for an introduction to
  ridges and also \cite{Gu} in his  study of geometric optic and
 applications in construction of eye lens.
% and also \cite[pages 31-33 ]{Mo6}.

A practical way to see the type of a ridge point is given by the following proposition.

\begin{prop}\label{prop:rzigzag} Consider a surface of class
 $C^r, \; r\geq 4$ parametrized by the graph $(u,v,h(u,v))$
where
$$\aligned h(u,v)=&\frac{k_1}2 u^2+\frac {k_2} 2 v^2+\frac a6 u^3+\frac d2
u^2v+\frac b2 uv^2+\frac c6 v^3\\
+&\frac{ A}{24}u^4+\frac{ B}6 u^3v+\frac C4 u^2v^2+\frac D6
uv^3+\frac{E}{24}  v^4+h.o.t\endaligned$$
Then $(0,0)$ is a ridge point for ${\mathcal P}_1$ when $a=0$. Also
$\sigma_1= [\frac{A-3k_1^3}{ k_1-k_2 }+\frac{2d^2}{(k_1-k_2)^2}].$

Corresponding to the foliation ${\mathcal P}_2$ the point $(0,0)$ is a ridge point when $c=0$. Also
$\sigma_2(0)=[\frac{E-3k_2^3}{ k_2-k_1}+\frac{2b^2}{(k_2-k_1)^2}]$.
\end{prop}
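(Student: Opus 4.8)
The proof is the computation of the $2$--jet at the origin of the principal curvature $k_1$, regarded as a function of the Monge chart $(u,v)$; all the derivatives of $h$ that occur are read off from the given jet, so it reduces to careful bookkeeping. Write $O(k)$ for a function vanishing to order $k$ at the origin.

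First I would record the fundamental forms of the graph $(u,v,h(u,v))$: with $W=\sqrt{1+h_u^2+h_v^2}$ one has $E=1+h_u^2$, $F=h_uh_v$, $G=1+h_v^2$, $e=h_{uu}/W$, $f=h_{uv}/W$, $g=h_{vv}/W$, and from the jet of $h$, $h_u=k_1u+O(2)$, $h_v=k_2v+O(2)$, $h_{uv}=du+bv+O(2)$, $h_{uu}=k_1+au+dv+\tfrac{A}{2}u^2+Buv+\tfrac{C}{2}v^2+O(3)$, $h_{vv}=k_2+bu+cv+O(2)$, $W^{-1}=1-\tfrac12(k_1^2u^2+k_2^2v^2)+O(3)$. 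At the origin $F=f=0$ and $e=k_1\neq k_2=g$, so the coordinate directions are principal and the point is non--umbilic; hence the Weingarten operator $S=I^{-1}II$ has two distinct eigenvalues there, and the one equal to $k_1$ at $0$ is a smooth function near $0$ with
\[
k_1(u,v)=S_{11}+\frac{S_{12}\,S_{21}}{S_{11}-S_{22}}+O(4),\qquad S_{11}=\frac{Ge-Ff}{EG-F^2},
\]
$S_{12}=(Gf-Fg)/(EG-F^2)=du+bv+O(2)$, $S_{21}=(Ef-Fe)/(EG-F^2)=du+bv+O(2)$, $S_{11}-S_{22}=k_1-k_2+O(1)$.

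Differentiating $S_{11}$ once in $u$ at $0$ (where $F=f=h_u=h_v=0$, $EG-F^2=1$) gives $\partial_uk_1(0)=\partial_uS_{11}(0)=h_{uuu}(0)=a$; since the $u$--axis is tangent to $\mathcal P_1$ at $0$, this agrees up to a nonzero factor with the derivative of $k_1$ along $\mathcal P_1$, so $(0,0)$ is a ridge point of $\mathcal P_1$ iff $a=0$. Assuming $a=0$, I would compute $\partial_u^2k_1(0)$ as the sum of the diagonal contribution $\partial_u^2S_{11}(0)=A-3k_1^3$ — the $A$ from $h_{uuuu}(0)$, one $-k_1^3$ from the factor $W^{-1}$ in $e$, and $-2k_1^3$ from the denominator $EG-F^2=1+k_1^2u^2+k_2^2v^2+O(3)$ together with $S_{11}(0)=k_1$ — and the off--diagonal contribution, the second $u$--derivative at $0$ of $(du+bv)^2/(k_1-k_2)$, namely $2d^2/(k_1-k_2)$; dividing their sum by $k_1-k_2$ yields $\sigma_1=\dfrac{A-3k_1^3}{k_1-k_2}+\dfrac{2d^2}{(k_1-k_2)^2}$. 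The statements for $\mathcal P_2$ follow by applying this to the surface after the swap $u\leftrightarrow v$, under which $(k_1,k_2,a,d,A)$ becomes $(k_2,k_1,c,b,E)$, giving the ridge condition $c=0$ and $\sigma_2(0)=\dfrac{E-3k_2^3}{k_2-k_1}+\dfrac{2b^2}{(k_2-k_1)^2}$.

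The one genuinely delicate point is this $2$--jet expansion: one must keep all terms of $S$ up to order two in $(u,v)$, in particular the off--diagonal entry $h_{uv}=du+bv$, which, through the eigenvalue--splitting term $S_{12}S_{21}/(S_{11}-S_{22})$, is responsible for the $2d^2/(k_1-k_2)^2$ piece, and one must correctly gather the three sources of $-3k_1^3$ (from $h_{uu}$, from $1/W$, and from $1/(EG-F^2)$). It is worth noting that the ridge condition $\partial_uk_1(0)=0$ is independent of whether $(u,v)$ is the graph chart or a principal chart, as they agree to first order at $0$; with that, $a=0$ characterizes ridges of $\mathcal P_1$ intrinsically, and the sign of $\sigma_1$ records the zigzag/beak to beak dichotomy of \cite{Po}.
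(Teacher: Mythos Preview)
Your proof is correct and follows essentially the same route as the paper: both compute the $2$--jet of the principal curvature $k_1$ in the graph coordinates $(u,v)$ and read off the coefficients. The paper's proof simply states the resulting Taylor expansions of $k_1(u,v)$ and $k_2(u,v)$ (equation~\eqref{eq:rz}) as the outcome of ``straightforward calculations''; you make the mechanism explicit by writing the Weingarten operator $S=I^{-1}II$ and using the first--order eigenvalue perturbation $k_1=S_{11}+S_{12}S_{21}/(S_{11}-S_{22})+O(4)$, which cleanly isolates the off--diagonal contribution $2d^2/(k_1-k_2)$ and the diagonal contribution $A-3k_1^3$. One tiny slip of wording: in your last paragraph you speak of ``three sources of $-3k_1^3$'' and list $h_{uu}$ among them, but $h_{uu}$ supplies the $A$; the $-3k_1^3$ comes only from $W^{-1}$ (giving $-k_1^3$) and from $(EG-F^2)^{-1}$ (giving $-2k_1^3$), exactly as you stated correctly a few lines earlier.
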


\begin{proof} Straightforward calculations shows that
the principal curvatures in a neighborhood of $(0,0)$ are given
by:

\begin{equation}\label{eq:rz}
\aligned k_1(u,v)=&k_1+au+dv+\frac
12(A-3k_1^3+\frac{2d^2}{k_1-k_2})u^2+(B-2\frac{bd}{k_2-k_1})uv
\\+& \frac
12(C-k_1k_2^2
- \frac{2b^2}{k_2-k_1} )v^2+h.o.t.\\
 k_2(u,v)=&k_2+bu+cv+\frac
12(C-k_1^2k_2+\frac{2d^2}{k_2-k_1})u^2+(D+2\frac{bd}{k_2-k_1})uv\\
+&\frac 12(E-3k_2^3+\frac{2b^2}{k_2-k_1} )v^2+h.o.t.\endaligned
\end{equation}
So the result follows.
\end{proof}

\begin{prop} \label{prop:ridgeboundary} Let $p_0$ be a ridge
 point of $M$ corresponding to principal foliation ${\mathcal P}_1$ such that
$\sigma_1(p_0)\ne 0$. Then the ridge set $R$ containing $p_0$
 is locally a regular curve transversal to  ${\mathcal P}_1$
and the boundary of $V(M)$  corresponding to $\alpha=0$ has a
 cuspidal edge along $\pi^{-1}(R)$. See Fig. \ref{fig:ridgeVM}.
 Analogously for the ridges associated to the principal foliation ${\mathcal P}_2$.
 \begin{figure}[ht]
 \psfrag{p1}{$p_1$}
\begin{center}
\includegraphics[scale=0.60]{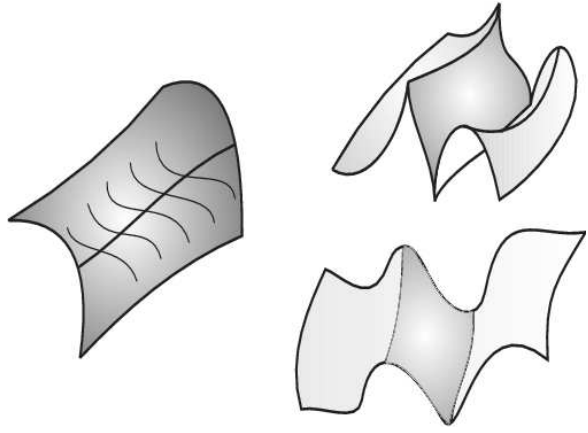}
\caption{Ridges and Singularities of the boundary of $V(M)$ \label{fig:ridgeVM} }
\end{center}
\end{figure}
\end{prop}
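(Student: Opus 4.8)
The plan is to work in the normal-form coordinates of Proposition \ref{prop:rzigzag}, where $p_0=(0,0)$ is a ridge point for ${\mathcal P}_1$, so $a=0$, and we are given $\sigma_1(p_0)\neq 0$. First I would establish that the ridge set $R$ is a regular curve near $p_0$. The ridge condition for ${\mathcal P}_1$ is $\partial k_1/\partial u=0$; from the expansion \eqref{eq:rz} we read off $\partial k_1/\partial u (u,v)= au + (A-3k_1^3+2d^2/(k_1-k_2))u + (B-2bd/(k_2-k_1))v + h.o.t.$ Wait — more carefully, $\partial k_1/\partial u(0,0)=a=0$, and the linear part of the function $g(u,v):=\partial k_1/\partial u$ is $(\partial^2 k_1/\partial u^2)(0,0)\,u + (\partial^2 k_1/\partial u\partial v)(0,0)\,v$. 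The coefficient of $u$ is $(k_1-k_2)\sigma_1(p_0)\neq 0$ by hypothesis, so $\nabla g(0,0)\neq 0$ and the implicit function theorem gives that $R=\{g=0\}$ is a regular curve through $p_0$; moreover, since the $u$-coefficient of $\nabla g$ is nonzero, $R$ is nowhere tangent to the $v=\text{const}$ leaves, i.e. transversal to ${\mathcal P}_1$. This is the easy half.

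The second half is to identify the cuspidal edge of $\partial V(M)$ over $\pi^{-1}(R)$ in the $\alpha=0$ component. By Proposition \ref{prop_sigma=k_g m+n} the osculating sphere sheet $\alpha=0$ is parametrized over $M$ by $\sigma(u,v)=k_1(u,v)\,m(u,v)+\vect n(u,v)$, where $m$ runs over $M\subset \bs^3\subset\LL ight$ and $\vect n$ is the unit normal to the principal-curvature-line in the $k_1$-direction. The strategy is to compute the differential of the map $(u,v)\mapsto \sigma(u,v)\in\Lambda^4$ and show it drops rank exactly along $R$, with a generic (cuspidal) degeneration governed by the sign $\sigma_1(p_0)$. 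Concretely, I would differentiate: $\partial_u\sigma = (\partial_u k_1) m + k_1 \partial_u m + \partial_u\vect n$ and $\partial_v\sigma = (\partial_v k_1) m + k_1\partial_v m + \partial_v\vect n$. Along the principal direction the standard relation $\partial_u \vect n = -k_1 \partial_u m$ (Rodrigues' formula, $v=\text{const}$ being a line of curvature) makes the terms $k_1\partial_u m + \partial_u\vect n$ collapse, so $\partial_u\sigma = (\partial_u k_1)\, m$ up to the contributions coming from the fact that $\vect n$ here is the curve-normal, not the surface normal — this is the point needing care. The upshot I expect: $\partial_u\sigma$ vanishes precisely when $\partial_u k_1 = 0$, i.e. on $R$, so the parametrization of the $\alpha=0$ sheet of ${\mathcal O}$ is singular exactly along $\pi^{-1}(R)$.

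To upgrade "singular along a curve" to "cuspidal edge" I would invoke the standard criterion (e.g. Whitney, or the criterion used for wavefronts): a singular point of a surface map $f:\RR^2\to\RR^N$ along a regular curve $R$ of corank-one singularities is a cuspidal edge iff the null direction of $df$ at each point of $R$ is transversal to $R$ (the "non-degeneracy" or "$A_2$" condition). Here the null direction at a point of $R$ is $\partial_u$ (since $\partial_u\sigma=0$ there while $\partial_v\sigma\neq 0$), and we already showed $R$ is transversal to the $u$-curves, hence transversal to $\partial_u$; the second-order nondegeneracy that distinguishes the cuspidal edge from worse singularities is exactly the condition $\partial_u^2\sigma \notin \operatorname{span}(\partial_v\sigma)$ along $R$, and $\partial_u^2 k_1(0,0)=(k_1-k_2)\sigma_1(p_0)\neq 0$ delivers precisely this. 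The main obstacle — and the only place real computation is unavoidable — is keeping straight the distinction between the surface normal and the principal-curve normal when differentiating $\sigma=k_1 m+\vect n$, and checking that the $\partial_\alpha$-direction of $V(M)$ (the light-ray fiber direction) does not spuriously restore the rank; once the Lorentz-geometric bookkeeping is done, the cuspidal-edge conclusion is immediate from the criterion, and the argument for ${\mathcal P}_2$ is verbatim with $(u,v,k_1,\sigma_1)$ replaced by $(v,u,k_2,\sigma_2)$.
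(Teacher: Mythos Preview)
Your argument is essentially the paper's own: both parts use the normal form of Proposition~\ref{prop:rzigzag} and the implicit function theorem for the regularity/transversality of $R$, and both show that the boundary map $\sigma_1(u,v)=k_1(u,v)\,m(u,v)+N(u,v)$ has $\partial_u\sigma_1=(\partial_u k_1)\,m$ in a principal chart, hence drops rank exactly on $R$. One clarification resolves your flagged ``point needing care'': in the formula $\sigma=k_n m+\vect n$ of Proposition~\ref{prop_sigma=k_g m+n} the vector $\vect n=N$ is the \emph{surface} normal (the unit normal to $M\subset\bs^3$), not a curve normal, so Rodrigues' relation $N_u=-k_1 m_u$, $N_v=-k_2 m_v$ applies directly in a principal chart and your cancellation is exactly right; there is no subtlety to untangle, and the $\partial_\alpha$-direction is irrelevant since the boundary sheet is the two-dimensional image of $(u,v)\mapsto\sigma_1(u,v)$. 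Your final step---invoking the standard cuspidal-edge criterion (kernel direction $\partial_u$ transverse to $R$, plus $\partial_u^2\sigma_1=(\partial_u^2 k_1)\,m\neq 0$ from $\sigma_1(p_0)\neq 0$)---is in fact more complete than the paper, which simply asserts the cuspidal structure from the rank-one locus without checking the nondegeneracy.
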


\begin{proof} In the parametrization given in Proposition \ref{prop:rzigzag} it
 follows that the at $p_0=(0,0)$ the principal direction corresponding to
 ${\mathcal P}_1$ is $e_1=(1,0)$ and the ridge set is parametrized,
 according to equation \eqref{eq:rz}, by:
$(u(v),v)=(-\frac{d(k_1-k_2)}{ (A-3k_1^3)(k_1-k_2)+ 2d^2 }v+O(v^2),v).$

Next consider a principal chart $(u,v)$.
The set of spheres $V(M)\subset \Lambda^4$ is parametrized by
 $\sigma(u,v,\alpha)=k_n(\alpha) m(u,v)+ N(u,v)$ with
 $k_n(\alpha) =k_1(u,v)\cos^2\alpha+k_2(u,v)\sin^2\alpha $,
 ${\mathcal L}(m,m)=0$ and $N_u=-k_1 m_u, \; N_v=-k_2m_v.$ See equation \eqref{sig_sig}.
We have that
$$\aligned \sigma_u=&[\frac{\partial k_1}{\partial u}\cos^2\alpha+
\frac{\partial k_2}{\partial u}\sin^2\alpha]m+(k_n-k_1)m_u\\
\sigma_v=& [\frac{\partial k_1}{\partial v}\cos^2\alpha+
\frac{\partial k_2}{\partial v}\sin^2\alpha]m+(k_n-k_2) m_v\\
\sigma_\alpha=&[ (k_2-k_1)\cos \alpha \sin \alpha] m\endaligned$$
So   $D\sigma$ has rank 3 for $\alpha\in (0,\frac{\pi}2)$.

The boundary of $V(M)$ is parametrized by $\alpha=0$ and $\alpha=\pi/2$ and so
$\sigma_1(u,v )=k_1(u,v) m(u,v)+ N(u,v)$  and 
 $\sigma_2(u,v)=k_2(u,v) m(u,v)+ N(u,v).$
The map $\sigma_1$ has rank 1 at the ridges and 
so we have the structure of cuspidal edges on the boundary of $V(M)$.
\end{proof}

\begin{prop}\label{prop:dlocal} Let $p_0$ be a non umbilic and disjoint from the ridge set.
 The Darboux curves  tangent to the principal lines are given by cuspidal curves
 $r_1(t)=(\frac 32 \frac{\partial k_1}{\partial u}(k_1-k_2) t^2+\cdots, 
  \frac{\partial k_1}{\partial u}(k_1-k_2)  t^3+\cdots)$ and 
 $r_2(t)=( \frac{\partial k_2}{\partial v}(k_2-k_1)t^3+\cdots, 
\frac 32 \frac{\partial k_2}{\partial v}(k_2-k_1) t^2+\cdots)$.
 The behavior of all the Darboux curves passing through $p_0$ 
is as shown in the Fig. \ref{fig:dregular}.
 \begin{figure}[ht]
 \psfrag{p1}{$p_1$}
    \psfrag{p2}{$p_2$}
  \psfrag{p3}{$p_3$}
 \psfrag{p4}{$p_4$}
\begin{center}

\includegraphics[scale=0.50]{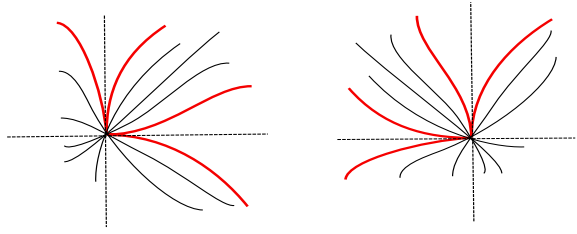}
\caption{Darboux curves through a non ridge point \label{fig:dregular} }
\end{center}
\end{figure}
\end{prop}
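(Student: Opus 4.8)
The plan is to work entirely in the graph parametrization $(u,v,h(u,v))$ already introduced in Proposition \ref{prop:rzigzag}, and to feed the differential equation \eqref{eq:dpc} of Darboux curves into the standard blowing-up/reduction machinery for implicit differential equations of the form $A\,dv^2 + B\,du\,dv + C\,du^2 = 0$. Since $p_0 = (0,0)$ is non umbilic we may assume $k_1 \ne k_2$ there, and since $p_0$ is disjoint from the ridge set both $\partial k_1/\partial u(p_0)$ and $\partial k_2/\partial v(p_0)$ are nonzero; these two facts are exactly what makes the leading terms in \eqref{eq:dpc} nondegenerate. First I would rewrite \eqref{eq:dpc}, which is first order in $\alpha$ along a curve, as a quadratic binary differential equation in the chart $(u,v)$ by eliminating $\alpha$ via $(u',v') = (\cos\alpha/\sqrt E,\ \sin\alpha/\sqrt G)$, i.e.\ $\tan\alpha = \sqrt{E/G}\,(dv/du)$. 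Clearing denominators turns \eqref{eq:dpc} into a cubic relation among $du$ and $dv$ — or, after separating the two Darboux directions associated to $\pm\alpha$, into the pair of statements about $r_1$ and $r_2$. The two exceptional Darboux curves are precisely the ones tangent to the principal directions $\alpha = 0$ and $\alpha = \pi/2$; near $\alpha=0$ one sets $\alpha$ small and reads off from \eqref{eq:dpc} that $3(k_1-k_2)\,\alpha\,\alpha' \approx \tfrac{1}{\sqrt E}\partial_u k_1$, i.e.\ $\tfrac{d}{ds}(\tfrac32\alpha^2) \approx \text{const}\cdot s$ to leading order, which after one more integration gives $\alpha \sim c\,s$, $v \sim$ cubic in $s$ and $u \sim$ quadratic in $s$ — this is the source of the $(t^2, t^3)$ cusp. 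I would make this rigorous by the substitution $u = t^2 p(t)$, $v = t^3 q(t)$ (and symmetrically $u = t^3$, $v = t^2$ for $r_2$), plug into the cleared-denominator form of \eqref{eq:dpc}, and solve for the leading coefficients $p(0), q(0)$, getting exactly the stated $\tfrac32\,\partial_u k_1\,(k_1-k_2)$ and $\partial_u k_1\,(k_1-k_2)$ (resp.\ with $k_2$, $v$ and $k_2-k_1$).

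Next, for the global picture near $p_0$ (the family of all Darboux curves through $p_0$, not just the two cuspidal ones), I would perform a blowing-up in the unit tangent bundle: exactly as in the proof of Proposition \ref{prop:54ma}, lift the binary equation to a line field / vector field $\mathcal D_1$ on the $(u,v,\alpha)$-space — this is literally the vector field written down in the proof of Proposition \ref{prop:pfdi} — restrict to the fiber $u = v = 0$, and analyze the singularities on the exceptional circle $\{\alpha\}$. Because $\partial_u k_1(p_0)\ne 0$ and $\partial_v k_2(p_0)\ne 0$, the only singular points of $\mathcal D_1$ on the fiber over $p_0$ come from $\sin\alpha = 0$ and $\cos\alpha = 0$, i.e.\ $\alpha = 0, \pi/2$; a direct linearization shows these are hyperbolic (a saddle and a node, or two nodes, depending on signs), and blowing down reproduces Figure \ref{fig:dregular} — a pencil of regular Darboux curves sweeping through $p_0$ together with the two cuspidal ones $r_1, r_2$ tangent to the principal directions. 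The fact that through a non-principal direction there is a unique Darboux curve (stated in the Introduction and again after Theorem \ref{geodesic_Darboux_curves}) guarantees that away from $\alpha = 0, \pi/2$ the field is regular, so there is nothing more to check there.

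The main obstacle I expect is bookkeeping rather than conceptual: extracting the precise numerical coefficients $3/2$ and $1$ (and the signs $k_1 - k_2$ versus $k_2 - k_1$) requires carrying the expansions \eqref{eq:rz} for $k_1(u,v), k_2(u,v)$ to the right order and being careful about which square root branch of $\sqrt{u^2+v^2+\cdots}$ is taken when one passes between the form of \eqref{eq:dpc} with the $d\alpha/ds$ term and the symmetric binary-quadratic form — the same sign subtlety that appears in Proposition \ref{prop:54ma}. A secondary point to be careful about is that the parametrization must be genuinely by arc length $s$ in \eqref{eq:dpc}; when I switch to the polynomial parameter $t$ in $u = t^2 p(t)$ etc., I need $ds/dt$ to be a unit (nonvanishing at $t=0$), which holds because $u' = \cos\alpha/\sqrt E$ is bounded away from $0$ near $\alpha = 0$, so the reparametrization $t \leftrightarrow s$ is a local diffeomorphism and does not affect the leading cusp exponents. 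Once these are handled, the statement about the figure is just the blowing-down of the hyperbolic phase portrait, exactly parallel to the ellipsoid argument already carried out above.
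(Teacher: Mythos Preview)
Your overall strategy --- lift the Darboux equation to the $(u,v,\alpha)$--space and read off the picture from a vector field there --- is exactly what the paper does. The execution, however, is off in two places.

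First, the field $\mathcal D_1$ from Proposition~\ref{prop:pfdi} has \emph{poles} at $\alpha=0,\pi/2$, not zeros: the $\partial/\partial\alpha$ component contains $\cos^2\alpha/\sin\alpha$ and $\sin^2\alpha/\cos\alpha$. So there are no equilibria on the fibre over $p_0$ to ``linearize'', and the saddle/node dichotomy you describe does not arise here (it does arise in the ridge analysis of Theorem~\ref{th:zigbec}, which is a different situation). What the paper does instead is clear the denominators by multiplying through by $3(k_1-k_2)\sin\alpha\cos\alpha$, obtaining the \emph{regular, nowhere-vanishing} field
\[
X:\quad u'=\tfrac{3(k_1-k_2)}{\sqrt E}\cos^2\alpha\sin\alpha,\quad
v'=\tfrac{3(k_1-k_2)}{\sqrt G}\sin^2\alpha\cos\alpha,\quad
\alpha'=\tfrac{1}{\sqrt E}\partial_u k_1\cos^3\alpha+\tfrac{1}{\sqrt G}\partial_v k_2\sin^3\alpha.
\]
At $\alpha=0$ one has $u'=v'=0$ but $\alpha'=\tfrac{1}{\sqrt E}\partial_u k_1\ne 0$; the integral curve is therefore a smooth curve in $(u,v,\alpha)$ tangent to the $\alpha$--axis, and the cusp is produced by the \emph{projection} to $(u,v)$, not by any singularity of the lifted flow. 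Taylor--expanding this integral curve in the $X$--time $t$ gives the stated $(\tfrac32\partial_u k_1(k_1-k_2)t^2,\ \partial_u k_1(k_1-k_2)t^3)$ directly. For $\alpha_0\ne k\pi/2$ the $(u',v')$--component is nonzero and the projection is regular; this is all that is needed for Figure~\ref{fig:dregular}.

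Second, your arc--length heuristic and the claim that $t\leftrightarrow s$ is a local diffeomorphism are both wrong. From $3(k_1-k_2)\alpha\,\alpha'\approx \tfrac{1}{\sqrt E}\partial_u k_1$ the right side is a nonzero \emph{constant}, not $\text{const}\cdot s$; integrating gives $\alpha\sim\sqrt{s}$, hence $u\sim s$ and $v\sim s^{3/2}$. Equivalently, since $du/ds=\cos\alpha/\sqrt E$ is bounded away from $0$ while $du/dt\sim t$, one has $s\sim t^2$: the arc length degenerates at the cusp, which is precisely why one should work with the regularized parameter $t$ of $X$ rather than with $s$. Your substitution $u=t^2p(t)$, $v=t^3q(t)$ will recover the coefficients, but only after you feed it into $X$ (or the cleared--denominator equation), not into \eqref{eq:dpc} as written.
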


\begin{proof} Consider the vector field $X$ defined by the differential equation:

$$\aligned u^\prime =& \frac{1}{\sqrt{E}}\cos\alpha[3(k_1-k_2)\sin\alpha\cos\alpha]\\
 v^\prime =&  \frac{1}{\sqrt{G}}\sin\alpha[3(k_1-k_2)\sin\alpha\cos\alpha]\\
\alpha^\prime=&  \frac{1}{\sqrt{E}} \frac{\partial k_1}{\partial u}
 \cos^3\alpha+ \frac{1}{\sqrt{G}}\frac{\partial k_2}{\partial v}
\sin^3\alpha.  \endaligned
$$

The projections of the integral curves of $X$ in the coordinates
 $(u,v)$ are precisely the Darboux curves.

For any initial condition $(0,\alpha_0)$, with $\alpha_0\ne k\pi/2$,
 the integral curves of $X$ are transversal to the axis $\alpha$ and so has a regular projection.
For $\alpha_0=n\pi/2$ and $\sigma_2\ne 0$, the integral curves of
$X$ are tangent to the axis $\alpha$ and the  projections are of
 cuspidal type.
For $\alpha=n\pi $ direct calculations gives:
 $(u(t),v(t))=(\frac 32 \frac{\partial k_1}{\partial u}(k_1-k_2)t^2+\cdots,
 (-1)^n \frac{\partial k_1}{\partial u}(k_1-k_2)t^3+\cdots)$.
Now observe that the projection of the integral curves passing
 through $(0,0,0)$ and $(0,0,\pi)$ are both tangent to semi axis of $u$.
\end{proof}

\begin{theo}\label{th:zigbec} Let $R$ be an arc of ridge points 
  transversal
to the corresponding principal foliation, i.e., suppose
 that $\sigma_i(p)\ne 0 $ for every $p\in  R$. Then  there are two types of
behavior for the Darboux curves  near the ridge set, the first
is the zigzag and the
other is the beak to beak.

\end{theo}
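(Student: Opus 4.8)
The plan is to reduce the theorem to the analysis carried out in Proposition~\ref{prop:dlocal} and Proposition~\ref{prop:ridgeboundary} by working in the suspended vector field $X$ on the three-manifold $(u,v,\alpha)$ whose integral curves project to Darboux curves. First I would use the parametrization of Proposition~\ref{prop:rzigzag}, placing a ridge point of $\mathcal P_1$ at the origin with $a=0$, so that the Darboux field has the form
$$u' = \tfrac{1}{\sqrt E}\cos\alpha\,[3(k_1-k_2)\sin\alpha\cos\alpha],\quad v' = \tfrac{1}{\sqrt G}\sin\alpha\,[3(k_1-k_2)\sin\alpha\cos\alpha],\quad \alpha' = \tfrac{1}{\sqrt E}\tfrac{\partial k_1}{\partial u}\cos^3\alpha + \tfrac{1}{\sqrt G}\tfrac{\partial k_2}{\partial v}\sin^3\alpha.$$
Along the plane $\{\alpha=0\}$ the field has an equilibrium exactly where $\partial k_1/\partial u=0$, i.e.\ over the ridge arc $R$; by the transversality hypothesis $\sigma_1\neq 0$, the set of such equilibria is a regular curve in the $(u,v,\alpha)$-space lying in $\{\alpha=0\}$, and it is a curve of (partially hyperbolic) equilibria of $X$.

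The key step is to linearize $X$ transversally to this equilibrium curve and read off the sign of $\sigma_1$. Since $\partial k_1/\partial u$ vanishes to first order on $R$ with nonzero derivative governed by $\sigma_1$ (via the expansion \eqref{eq:rz}), near a point of $R$ the pair $(u,\alpha)$ transverse to $R$ behaves, after rescaling, like the planar system
$$\dot u = 3(k_1-k_2)\,\alpha,\qquad \dot\alpha = \frac{\partial^2 k_1}{\partial u^2}\,u = \sigma_1\,(k_1-k_2)\,u + \text{(lower order)},$$
whose linear part has eigenvalues with product proportional to $-\sigma_1(k_1-k_2)^2$. Thus when $\sigma_1<0$ the transverse equilibrium is a saddle and one gets the zigzag pattern (Darboux curves cross the ridge with an inflection-type turn, matching Fig.~\ref{fig:dregular} glued along $R$), while when $\sigma_1>0$ it is a center/focus-type degenerate equilibrium producing the beak-to-beak pattern, where Darboux curves on the two sides of $R$ do not connect through but turn back. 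I would make this rigorous by exhibiting, near a generic point of $R$, a two-dimensional center-type invariant manifold transverse to $R$ on which $X$ reduces to the above planar field, and invoking the standard normal-form description of a transversal family of such planar singularities; the three cusp-type Darboux curves tangent to $\mathcal P_1$ from Proposition~\ref{prop:dlocal} degenerate precisely as one crosses $R$, which is what distinguishes the two regimes.

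The main obstacle I anticipate is controlling the behavior \emph{on} the degenerate stratum $\{\alpha=0\}\cap\pi^{-1}(R)$: there $\alpha'$ vanishes to second order in $\alpha$ together with the vanishing of $\partial k_1/\partial u$, so the naive linearization is not hyperbolic in the $v$-direction along $R$ and one must carefully separate the (trivial) flow direction along $R$ from the genuinely two-dimensional transverse dynamics — essentially a center-manifold reduction along a curve of equilibria, combined with a blow-up in the $(u,\alpha)$ variables to resolve the cusp points of Proposition~\ref{prop:dlocal}. Once the reduction is done, the dichotomy $\sigma_1\lessgtr 0$ is exactly the saddle-versus-nonsaddle alternative and the zigzag/beak-to-beak terminology is justified by comparing the resulting pictures with the known resolution of the boundary cuspidal edge of $V(M)$ along $\pi^{-1}(R)$ from Proposition~\ref{prop:ridgeboundary}. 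The case of a $\mathcal P_2$-ridge is identical after exchanging $(u,k_1)$ with $(v,k_2)$ and $\alpha$ with $\pi/2-\alpha$.
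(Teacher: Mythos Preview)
Your overall architecture---lift to the vector field $X$ on $(u,v,\alpha)$, locate the curve of equilibria over the ridge in $\{\alpha=0\}$, linearize transversally, and read off the dichotomy from the sign of $\sigma_1$---is exactly the paper's approach. But there is a sign slip at the decisive moment, and it inverts the entire picture.

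You correctly compute that the transverse $2\times 2$ linear part has trace zero and determinant (product of eigenvalues) proportional to $-\sigma_1(k_1-k_2)^2$. From this you conclude ``when $\sigma_1<0$ the transverse equilibrium is a saddle''. That is backwards: when $\sigma_1<0$ the determinant is \emph{positive}, and a traceless $2\times 2$ matrix with positive determinant has purely imaginary eigenvalues, i.e.\ a linear center. Conversely $\sigma_1>0$ gives negative determinant, hence a genuine saddle. So the correct matching is: $\sigma_1>0$ (beak-to-beak) $\Rightarrow$ normally hyperbolic saddle along $R$, with transverse stable and unstable curves whose projections are the Darboux curves crossing the ridge tangentially to $\mathcal P_1$; $\sigma_1<0$ (zigzag) $\Rightarrow$ transverse linear center, no Darboux curve tangent to $\mathcal P_1$ along $R$, and the projected orbits oscillate with cusps. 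Your verbal descriptions of the two pictures (``cross the ridge'' versus ``turn back'') are essentially right for saddle versus center, but you have attached the names zigzag and beak-to-beak to the wrong ones.

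There is also a genuine gap in the center case. A traceless linear center does not by itself force closed orbits for the nonlinear field; you must rule out a weak focus. Center-manifold reduction and blow-up, which you propose, do not settle this. The paper handles it by invoking Roussarie's normal-form theorem for this situation to produce a local first integral near the curve of equilibria; the level sets are cylinders around $R$ in $(u,v,\alpha)$, and the helices on each cylinder project to Darboux curves acquiring a cusp each time they cross $\{\alpha=0\}$---this is precisely the zigzag. Without some such ingredient (Roussarie, or an explicit first integral as you later see on cones, cylinders and surfaces of revolution), the $\sigma_1<0$ analysis is incomplete.
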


\begin{figure}[htb]
\begin{center}
 \includegraphics[scale=0.5]{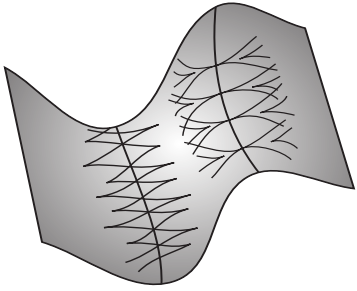}
\end{center}
\caption{ Darboux curves  near regular curve of ridges: zigzag and beak to beak.  \label{fig:dridge} }
\end{figure}

\begin{proof} Consider the  vector field $X$ as in the proof of proposition \ref{prop:dlocal}.

%$$\aligned u^\prime =& \cos\alpha[3(k_1-k_2)\sin\alpha\cos\alpha]\\
 %v^\prime =& \sin\alpha[3(k_1-k_2)\sin\alpha\cos\alpha]\\
%\alpha^\prime=&  \frac{1}{\sqrt{E}} \frac{\partial k_1}{\partial u}
 %\cos^3\alpha+ \frac{1}{\sqrt{G}}\frac{\partial k_2}{\partial v}
%\sin^3\alpha.  \endaligned
%$$

We will consider only the ridge set corresponding to ${\mathcal P}_1$. 
For the other principal foliation the analysis  is similar.
The ridge set is defined by the equations $\frac{\partial k_1}{\partial u}(u,v)=0$
 and $\frac{\partial k_2}{\partial v}(u,v)=0$, one for the corresponding principal foliation.

The singularities of $X$ are defined by $(U(v),v),0)$ and $(u,V(u),\frac{\pi}2)$, 
 where  $\frac{\partial k_1}{\partial u}(U(v),v)=0$ and $\frac{\partial k_2}{\partial v}(u,V(u))=0$.

To simplify the notation suppose a singular point $(0,0,0)$ of the ridge
 transversal to the principal foliation ${\mathcal P}_1$ and $E(0)=G(0)=1$.

It follows that:

$$\aligned DX(0)=&\left(\begin{matrix} 0 & 0 & 3(k_1-k_2) \\
0 & 0 & 0 \\ \frac{{\partial}^2 k_1}{\partial u^2} & 0 & 0 \end{matrix} \right) \endaligned$$

The eigenvalues of $DX(0)$ are:
$$\lambda_1=0,  \lambda_2=\frac{1}{\sqrt{3}} \sqrt{ \frac{{\partial}^2 k_1}{\partial u^2} / (k_1-k_2)},
\lambda_2= -\frac{1}{\sqrt{3}}\sqrt{ \frac{{\partial}^2 k_1}{\partial u^2}/ (k_1-k_2)}.$$

By invariant manifold theory, when $\lambda_2\lambda_3=-
 \frac 13\frac{{\partial}^2 k_1}{\partial u^2} /(k_1-k_2) =-\frac 13 \sigma_1(0) <0$,
 the singular set of $X$ (ridge set) is normally hyperbolic and there
 are stable and unstable surfaces, normally hyperbolic along the
 singular set. This implies that there is a lamination 
(continuous fibration) along the ridge set and the fibers
 are the  Darboux curves. Also  the prolonged Darboux curves are o class $C^1$ along the ridge set.

So the Darboux curves are as shown in Fig. \ref{fig:dridge}, 
center and right. That is, there are Darboux curve crossing the
 ridge, tangent to the principal lines, and the prolonged Darboux 
curves are $C^1$ along the ridge set.

In the case when $  \sigma_1(0)= \frac{{\partial}^2 k_1}{\partial u^2} /(k_1-k_2) <0$ 
 the non zeros eigenvalues of $DX(0)$ are purely complex   and so the singular set is not normally hyperbolic.

In this case we are in the hypothesis of Roussarie Theorem,
 \cite[Theorem 20, page 59]{roussarie}, so there is a local first
 integral in a neighborhood of the ridge set. The level sets of 
this first integral are cylinders and the integral curves (helices)
 in each cylinder when projected in the surface $M$ has a cuspidal
 point exactly when  helix cross the section $\alpha=0$. This produces the zigzag.

There are no Darboux curves tangent to the principal direction
 $e_1$ along the ridge set in this case.
\end{proof}

\section{Darboux curves on general cylinders,   cones and surfaces of revolution  } \label{sc:8}
%%%%%%%%%%%%%%%%%%%%%%%%%%%%%%%%%%%%%%%%%%%%%%%%%%%%%%%%%%%%%%%%%%%

Darboux curves on general cones where already studied by Santaló (\cite{sa2}).
In a similar way, one can study Darboux curves on cylinders and surfaces of revolution.
This is not a coincidence. The three type of surfaces are canal surfaces
 corresponding to a curve $\gamma  \subset \Lambda^4$ which is also contained
 in a $3$-dimensional subset of  $\ll^5$. Depending on the subspace, this intersection
 is either a copy of $\Lambda^2$, a unit sphere $ß^3$ or a $2$-dimensional 
cylinder (see \cite{Da}, \cite{mn},  \cite{Ba-La-Wa}). The latter condition defines
 conformal images of general cones, general cylinders and surfaces of revolution.

These surfaces can be obtained imposing conformally  invariant local conditions.

Recall that, assuming that $S$ is a surface which is {\it
umbilic free}, that is, that the principal curvatures $k_1(x)$ and $k_2(x)$
of $S$ are different at any point $x$ of $S$. Let $X_1$ and $X_2$ be unit
vector fields tangent to the curvature lines corresponding to,
respectively, $k_1$ and $k_2$. Throughout the paper, we assume that $k_1
> k_2$. Put $µ= (k_1 - k_2)/2$. Since more than 100 years, it is known
(\cite{tresse}, see also \cite{csw}) that the vector fields $\xi _i = X_i/µ$
and the coefficients $\theta_i$ ($i = 1,2$) in
\begin{equation*}
 [\xi _1, \xi_ 2] = -\frac{1}{2}\left( \theta_2\xi _1 + \theta_1 \xi_2\right)
%\\&[X_1,X_2]=-k_g^1 X_1-k_g^2 X_2,\;\; k_g^1= (X_2)k_1/(k_1-k_2),\;\; k_g^2= (X_1)k_2/(k_1-k_2)\endaligned
\end{equation*}
are invariant under arbitrary (orientation preserving) conformal
transformation of $\mathbb R^3$. (In fact, they are invariant under
arbitrary conformal change of the Riemannian metric on the ambient space).
Elementary calculation involving Codazzi equations shows that
\begin{equation*}
\theta_1 = \frac{1}{µ^2}\cdot X_1(k_1)\quad\text{and}\quad\theta_2 =
\frac{1}{µ^2}\cdot X_2(k_2).
\end{equation*}
The quantities $\theta_i$ ($i=1,2$) are called {\it conformal principal
curvatures} of $S$.

Canal surface are characterized locally,  \cite{Da}, \cite{H-J}, \cite{mn},
 \cite{Ba-La-Wa}, by the following propositions.

\begin{prop}\label{prop01canal} A surface $S$ is (a piece of) of a canal
 if and only if one of its
conformal principal curvatures, say $\theta_2$, is equal to zero.

\end{prop}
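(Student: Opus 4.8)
The plan is to prove both implications by direct computation in a principal chart, relying on the formula $\theta_2 = \tfrac{1}{\mu^2}X_2(k_2)$ derived just above (with $\mu = (k_1-k_2)/2$, $X_2 = \tfrac{1}{\sqrt G}\partial_v$), so that $\theta_2 = 0$ is equivalent to $\partial k_2/\partial v = 0$ identically on the surface. The key point is therefore to show that a surface is (a piece of) a canal surface if and only if $\partial k_2/\partial v \equiv 0$ in a principal chart where $v = \text{const}$ are the lines of curvature associated to $k_2$.

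First I would establish the ``only if'' direction. A canal surface is by definition the envelope of a one-parameter family of spheres; equivalently, in the Lorentz model of Section \ref{sc:1}, it corresponds to a curve $\gamma(t) \subset \Lambda^4$, and the surface is swept out by the characteristic circles of this family. Each characteristic circle is a line of curvature, all of whose points share the same tangent sphere, hence the same value of one principal curvature — say $k_2$ is constant along each characteristic circle. Parametrizing so that $v$ labels the characteristic circles and $u$ runs along them (these are principal coordinates since the characteristic circles are curvature lines and the orthogonal trajectories are the other family), we get $k_2 = k_2(v)$, i.e. $\partial k_2/\partial u$ is what varies while along the circle $k_2$ is fixed. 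Care must be taken with the labelling convention: I want $v = \text{const}$ to be the characteristic circles, so that $\partial k_2/\partial v$ is the derivative transverse to... actually the derivative of $k_2$ along the curvature line where $k_2$ lives. Let me instead say: along a characteristic circle $k_2$ is constant, the characteristic circle is the $k_2$-curvature line, hence $X_2 k_2 = 0$, hence $\theta_2 = 0$.

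For the ``if'' direction, suppose $\theta_2 = 0$, i.e. $\partial k_2/\partial v \equiv 0$ in the principal chart. Then the osculating spheres $\Sigma_2(u,v) = k_2(u,v)\,m(u,v) + N(u,v)$ associated to the principal curvature $k_2$ (cf. the formula $\sigma = k_g m + \vec n$ of Proposition \ref{prop_sigma=k_g m+n} and the parametrization in the proof of Proposition \ref{prop:ridgeboundary}) satisfy $\partial \Sigma_2/\partial v = (\partial k_2/\partial v)\,m + (k_2 - k_2)\,m_v = 0$, so $\Sigma_2$ depends only on $u$: we get a one-parameter family of spheres $\Sigma_2(u)$. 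It remains to check that $M$ is the envelope of this family, i.e. that each sphere $\Sigma_2(u_0)$ is tangent to $M$ precisely along the curvature line $\{u = u_0\}$ and that these tangency loci are circles; this is exactly the statement that the lines of curvature of one family are circles, which is the classical Joachimsthal/Dupin characterization of canal surfaces. The tangency along the whole curvature line is immediate because $\Sigma_2(u_0)$ is the common osculating sphere of all points on $\{u=u_0\}$ (they share the same $k_2$ and the same normal direction up to the curvature line's own curvature), and a curvature line that lies on a fixed sphere and on $M$ with both surfaces tangent along it must be a circle.

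The main obstacle I expect is the ``if'' direction: going from the pointwise differential condition $\theta_2 = 0$ to the global geometric statement that $M$ is swept by a sphere family requires showing the $u = \text{const}$ curvature lines are genuine circles, not merely that they lie on spheres $\Sigma_2(u)$ tangent to $M$. One clean way is to observe that $\partial_v \Sigma_2 = 0$ forces, via the Lorentz-geometric identification, span$(\Sigma_2(u), \dot\Sigma_2(u))^\perp \cap \bs^3$ to be exactly the characteristic circle, which is tangent to the $u$-line; then an integration argument (the $u$-line is tangent at every point to a fixed circle lying on a fixed sphere) pins it down. Since the paper explicitly says this proposition is ``characterized locally'' with references \cite{Da}, \cite{H-J}, \cite{mn}, \cite{Ba-La-Wa}, I would be comfortable citing the classical result for the circle-of-curvature-lines step and presenting the computation $\theta_2 = 0 \iff \partial_v \Sigma_2 = 0$ as the conceptual bridge, rather than reproving Dupin's theorem from scratch.
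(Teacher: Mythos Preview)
The paper does not actually prove this proposition: immediately before it, the text reads ``Canal surfaces are characterized locally, \cite{Da}, \cite{H-J}, \cite{mn}, \cite{Ba-La-Wa}, by the following propositions,'' and Propositions~\ref{prop01canal} and~\ref{prop02canal} are then stated without proof. So there is nothing in the paper to compare your argument against beyond the fact that the authors regard the result as standard.

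Your outline is essentially the classical argument and is sound. The computation $\partial_v\sigma_2 = (\partial_v k_2)\,m + k_2 m_v + N_v = (\partial_v k_2)\,m$ (using $N_v=-k_2 m_v$, exactly as in the proof of Proposition~\ref{prop:ridgeboundary}) is the clean bridge you want: $\theta_2=0$ iff the $k_2$-osculating spheres form a genuine one-parameter family $\sigma_2(u)$. For the converse you then check $m\perp\sigma_2$, $m\perp\dot\sigma_2$, and $m_v\perp\sigma_2$, $m_v\perp\dot\sigma_2$ (all immediate in the principal chart), so the curve $v\mapsto m(u_0,v)$ lies in $\mathrm{span}(\sigma_2,\dot\sigma_2)^\perp\cap\bs^3$, which is the characteristic circle; this simultaneously shows the curvature line is a circle and that $M$ is the envelope. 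That resolves the obstacle you flagged without appealing to Dupin's theorem.

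One correction: in your last paragraph you repeatedly write ``$u$-line'' where you mean the $v$-line (i.e.\ the curve $u=\text{const}$, the $k_2$-curvature line). The characteristic circle is not merely tangent to that line; it \emph{is} that line. Fix the labelling and the argument goes through.
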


%\begin{proof} %
%See \cite[page  ??]{H-J}.
% \ron { A fazer ou dar referencia??}
%\end{proof}
\begin{prop}\label{prop02canal}
Imposing moreover that the other conformal curvature, say $\theta_1$,
 is constant along characteristic circles characterizes the surface
 as  one of the three families
 above (cone, cylinder and surfaces of revolution).
\end{prop}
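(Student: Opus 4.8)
The plan is to start from Proposition \ref{prop01canal}, which already tells us that a surface $S$ is (a piece of) a canal surface precisely when one conformal principal curvature vanishes, say $\theta_2 \equiv 0$; equivalently, by the formula $\theta_2 = X_2(k_2)/\varrho^2$, this means $k_2$ is constant along the leaves of $\mathcal{P}_2$, i.e. $\partial k_2/\partial v = 0$ in a principal chart in which $v = \text{const}$ are the leaves of $\mathcal{P}_1$ and $u = \text{const}$ those of $\mathcal{P}_2$. So a canal surface is an envelope of a one-parameter family of spheres $\gamma \subset \Lambda^4$; the characteristic circles are the leaves of $\mathcal{P}_2$ (here I should double-check the indexing convention against the earlier sections, since the labels of $\theta_1$ versus $\theta_2$ and which foliation carries the characteristic circles must be kept consistent). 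The extra hypothesis to impose is that the remaining conformal curvature $\theta_1$ is constant along the characteristic circles, i.e. $X_2(\theta_1) = 0$, and the claim is that this cuts the family of canal surfaces down to exactly cones of revolution, cylinders, and surfaces of revolution, up to conformal transformation.

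The key is to translate the condition into the geometry of the curve $\gamma \subset \Lambda^4$ in the model $\ll^5$ of Section \ref{sc:1}. A canal surface corresponds to a curve $\gamma$ in $\Lambda^4$, and the conformal invariant $\theta_1$ should be expressible in terms of the Lorentz-geometric curvature invariants of $\gamma$ (its "curvature" as a spacelike curve in $\Lambda^4$ with the induced metric). The condition that $\theta_1$ be constant along each characteristic circle is automatic in a canal surface away from special circles, so "constant along characteristic circles" really becomes a condition on how $\theta_1$ varies from one characteristic circle to the next, i.e. along $\gamma$: it should force $\theta_1$ to be a specific function — I expect it to be literally constant, or to be an affine function of arc-length, along $\gamma$. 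Concretely, I would: (1) write the first and second fundamental forms of a canal surface explicitly in terms of $\gamma$ (a standard computation: parametrize $S$ by (point on $\gamma$, point on characteristic circle) and read off $k_1$, $k_2$); (2) compute $\theta_1$ and $X_2(\theta_1)$; (3) set $X_2(\theta_1) = 0$ and extract an ODE for the curvature function of $\gamma$; (4) solve that ODE and identify the three solution families with the three model surfaces via their Lorentz description recalled in Section \ref{sc:8} (a copy of $\Lambda^2$ for cones, a unit $\SS^3$ for surfaces of revolution, a $2$-dimensional cylinder for cylinders — the three ways $\gamma$ can lie in a $3$-dimensional subspace of $\ll^5$).

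An alternative, possibly cleaner route is purely Riemannian/PDE-theoretic: in the principal chart with $\partial k_2/\partial v = 0$ (so $k_2 = k_2(u)$), write down what $\theta_1$ constant along characteristic circles means as a PDE in $E, G, k_1$, and use the Codazzi equations (in the form quoted in Section \ref{sc:4}: $\partial k_1/\partial v = (E_v/2E)(k_2 - k_1)$, $\partial k_2/\partial u = (G_u/2G)(k_1-k_2)$) plus the Gauss equation to pin down the metric, then integrate. The Codazzi relation for $\partial k_2/\partial u$ combined with $k_2 = k_2(u)$ already strongly constrains $G_u/G$, and pushing this through should leave essentially a one-parameter choice governing whether the "axis curve" is a line, a concentric family, or a geodesic in $\SS^3$ — the three cases. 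I would present whichever of the two routes keeps the bookkeeping lightest; the Lorentz-model route makes the trichotomy transparent, so I lean that way.

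The main obstacle I anticipate is step (2)–(3): carrying the conformal invariant $\theta_1$ — which is a second-order expression in the embedding — through the canal-surface parametrization cleanly enough that the condition $X_2(\theta_1)=0$ becomes a recognizable ODE, rather than an unilluminating mess. In particular one must be careful that "$\theta_1$ constant along characteristic circles" is not vacuously true or false on a generic canal surface: the characteristic circles are special curves (leaves of a principal foliation), and $k_1$ already varies along them in general, so the condition genuinely restricts $\gamma$. Getting the quantifier right here — and handling the possible degeneration where a characteristic circle shrinks to a point (the vertex of a cone) or the family has a fixed point — is where the care is needed; the final identification with the three named surfaces is then a matter of matching against the classification of curves in $\Lambda^4$ contained in a $3$-space, which is cited from \cite{Da}, \cite{mn}, \cite{Ba-La-Wa}.
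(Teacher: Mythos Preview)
The paper does not prove this proposition at all: it is stated, together with Proposition~\ref{prop01canal}, as a known local characterization and attributed to the references \cite{Da}, \cite{H-J}, \cite{mn}, \cite{Ba-La-Wa} (see the sentence immediately preceding the two propositions). So there is no ``paper's own proof'' to compare against; in the paper the result functions purely as background, and is used only through its consequence that in a principal chart one may take $\theta_1=\theta_1(u)$ and $\theta_2=0$ (as in Propositions~\ref{prop:clr} and~\ref{prop:clrint}).

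Regarding your plan itself: the Lorentz-model route you sketch is indeed the natural one and matches what the cited references do --- the three families are exactly the canal surfaces whose defining curve $\gamma\subset\Lambda^4$ lies in a $3$-dimensional linear subspace of $\ll^5$, the signature of that subspace distinguishing cone, cylinder, and surface of revolution. However, there is a genuine confusion in your middle paragraph: the condition ``$\theta_1$ constant along characteristic circles'' is \emph{not} automatic for a general canal surface. On a canal surface with $\theta_2\equiv 0$ one has $k_2=k_2(u)$, but $k_1$ --- and hence $\theta_1=X_1(k_1)/\varrho^2$ --- typically depends on both $u$ and $v$; the hypothesis $X_2(\theta_1)=0$ is precisely the extra constraint that forces $\gamma$ into a $3$-plane. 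Your reinterpretation (``really becomes a condition on how $\theta_1$ varies from one characteristic circle to the next'') is therefore off the mark, and your later worry that the condition might be vacuous is resolved in the negative: it is a genuine, nontrivial pointwise condition along each characteristic circle. With that correction, your step-(1)--(4) outline is sound in principle, though you have not executed the computation that turns $X_2(\theta_1)=0$ into the flatness of the osculating $3$-space of $\gamma$.
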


\begin{prop}\label{prop:clr}  Let  $M$ be a surface and $(u,v)$ be a principal
 chart  such that $\theta_1(u,v)=\theta_1(u)$ and $\theta_2(u,v)=0$. 
Let $A(u)=exp[\int \frac{k_1^\prime}{k_1-k_2}du] $ and $\alpha\in (0,\pi)$ be an angle.
Then the function ${\mathcal J}(u,\alpha)=A(u)\cos^3\alpha$ is a first integral
 of the Darboux curves.
Moreover in the region $A_c=\pi(M_c)=\{(u,v): u\in M_c\} $,  $M_c={\mathcal J}^{-1}(c)$,
 the Darboux curves are defined by the implicit differential equation
$$ c^{2/3}G dv^2- E (A^{2/3} -c^{2/3})du^2=0.$$

\end{prop}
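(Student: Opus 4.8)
The plan is to use the differential equation of Darboux curves in a principal chart, equation \eqref{eq:dpc} of Proposition \ref{prop:dpc}, together with the hypotheses $\theta_2\equiv 0$ and $\theta_1=\theta_1(u)$, and to show directly that $\mathcal J(u,\alpha)=A(u)\cos^3\alpha$ is constant along Darboux curves. First I would translate the hypotheses into statements about $k_1,k_2$. Since $\theta_2=\frac{1}{\varkappa^2}X_2(k_2)=\frac{1}{\sqrt G}\,\partial k_2/\partial v\cdot(\text{nonzero factor})$, the condition $\theta_2\equiv 0$ gives $\partial k_2/\partial v\equiv 0$; hence the right-hand side of \eqref{eq:dpc} reduces to $\frac{1}{\sqrt E}\,\frac{\partial k_1}{\partial u}\cos^3\alpha$. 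The Darboux equation then reads
\begin{equation*}
3(k_1-k_2)\sin\alpha\cos\alpha\,\frac{d\alpha}{ds}=\frac{1}{\sqrt E}\,\frac{\partial k_1}{\partial u}\cos^3\alpha .
\end{equation*}
Using $u'=\cos\alpha/\sqrt E$, rewrite $\frac{1}{\sqrt E}\frac{\partial k_1}{\partial u}\cos^2\alpha = \frac{du}{ds}\,\frac{\partial k_1}{\partial u}\cos\alpha=\frac{dk_1}{ds}\cos\alpha$ (here using $\partial k_1/\partial v$ does not enter because along a Darboux curve $dk_1/ds = k_1' u' + (\partial k_1/\partial v)v'$; I will need to check whether $\partial k_1/\partial v$ contributes, and invoke the Codazzi relation $\partial k_1/\partial v=\frac{E_v}{2E}(k_2-k_1)$ if necessary — but in fact for the first-integral computation it is cleaner to keep everything as functions of $u$ and $\alpha$ along the curve).

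The cleanest route is: along a Darboux curve, regard $\alpha$ as a function of $u$ (away from points where $u'=0$, i.e. $\cos\alpha=0$). Dividing the Darboux equation by $u'=\cos\alpha/\sqrt E$ gives
\begin{equation*}
3(k_1-k_2)\sin\alpha\,\frac{d\alpha}{du}= \frac{\partial k_1}{\partial u}\cos\alpha ,
\end{equation*}
that is, $-3\tan\alpha\,\dfrac{d\alpha}{du}=\dfrac{1}{k_1-k_2}\dfrac{\partial k_1}{\partial u}=\dfrac{d}{du}\log A(u)$ by the very definition $A(u)=\exp\!\big[\int\frac{k_1'}{k_1-k_2}\,du\big]$. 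The left side is $\dfrac{d}{du}\big(3\log\cos\alpha\big)$, so $\dfrac{d}{du}\log\!\big(A(u)\cos^3\alpha\big)=0$, proving $\mathcal J$ is a first integral. I will also remark that at points where $\cos\alpha=0$ the curve is tangent to a principal line and $\mathcal J=0$ there, consistent with the level set.

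For the second assertion, on a level set $\mathcal J^{-1}(c)$ we have $\cos^3\alpha = c/A(u)$, hence $\cos^2\alpha = (c/A)^{2/3}$ and $\sin^2\alpha = 1-(c/A)^{2/3}$. A curve making angle $\alpha$ with $\mathcal P_1$ in a principal chart satisfies $\dfrac{G\,dv^2}{E\,du^2}=\tan^2\alpha=\dfrac{\sin^2\alpha}{\cos^2\alpha}=\dfrac{1-(c/A)^{2/3}}{(c/A)^{2/3}}=\dfrac{A^{2/3}-c^{2/3}}{c^{2/3}}$; clearing denominators yields $c^{2/3}G\,dv^2 - E\,(A^{2/3}-c^{2/3})\,du^2=0$, which is the stated implicit equation. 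I expect the main obstacle to be bookkeeping: making sure the reduction from arc-length parametrization to $u$ as parameter is legitimate (handling the loci $\cos\alpha=0$ and $k_1=k_2$, the latter excluded by the umbilic-free standing assumption), and confirming via Codazzi that no $\partial k_1/\partial v$ term spoils the identification of the right-hand side with $\frac{d}{du}\log A$. None of this is deep, but it is where care is required; everything else is a short computation.
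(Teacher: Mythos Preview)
Your approach is essentially identical to the paper's: use $\theta_2=0$ to drop the $\partial k_2/\partial v$ term in \eqref{eq:dpc}, pass from $s$ to $u$ as parameter to get the separable equation $\dfrac{d\alpha}{du}=\dfrac{1}{3}\dfrac{k_1'}{k_1-k_2}\dfrac{\cos\alpha}{\sin\alpha}$, integrate to obtain $A(u)\cos^3\alpha=\text{const}$, and then substitute $\cos^2\alpha=(c/A)^{2/3}$ into $\dfrac{G\,dv^2}{E\,du^2}=\tan^2\alpha$ for the implicit equation. One small slip: your line ``$-3\tan\alpha\,\dfrac{d\alpha}{du}=\dfrac{k_1'}{k_1-k_2}$'' has the wrong sign (it should be $+3\tan\alpha\,\dfrac{d\alpha}{du}$), though your final conclusion $\dfrac{d}{du}\log(A\cos^3\alpha)=0$ is correct.
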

\begin{proof} The differential equation \eqref{eq:dpc} is simplified to
$$u^\prime=\frac{\cos \alpha}{\sqrt{E}},\;\;v^\prime=
\frac{\sin\alpha}{\sqrt{G}},\;\;\alpha^\prime=  \frac{1}{3\sqrt{E}}
\frac{k_1^\prime}{k_1-k_2}\frac{\cos^2\alpha}{\sin\alpha}.$$

So it follows that $\frac{d\alpha}{du}= \frac 13
 \frac{k_1^\prime}{k_1-k_2}\frac{\cos\alpha}{\sin\alpha}$ which 
is a equation of separable variables. Direct integration leads to the
 first integral $\mathcal J$ as stated.

To obtain the implicit differential equation solve the 
equation ${\mathcal J}(u,v)=c$ in function of $\cos\alpha$ and observe that
 $\frac{dv}{du}=\frac{\sqrt{E}}{\sqrt{G}}\frac{\sin\alpha}{\cos\alpha}.$
\end{proof}

\begin{prop}\label{prop:clrint}  Let  $M$ be a surface and
 $(u,v)$ be a principal chart  such that $\theta_1(u,v)=\theta_1(u)$ and $\theta_2(u,v)=0$. 
Then the plane-field $\mathcal P$ is integrable.
\end{prop}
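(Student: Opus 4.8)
The plan is to feed the hypotheses directly into the integrability criterion of Proposition~\ref{prop:pfdi}. That proposition says that $\mathcal P$ is integrable if and only if
\[
(\xi_1)\theta_2=-\tfrac16\,\theta_1\theta_2,\qquad (\xi_2)\theta_1=\tfrac16\,\theta_1\theta_2 .
\]
Since by hypothesis $\theta_2\equiv 0$, both right-hand sides vanish identically: the first equation degenerates to $\xi_1(0)=0$, which is automatic, and the whole assertion is reduced to the single identity $(\xi_2)\theta_1=0$.

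To check this, I would simply write $\xi_2$ in the principal chart. By the formulas for the conformal vector fields recalled just before Proposition~\ref{prop01canal}, $\xi_2=\dfrac{2}{k_1-k_2}X_2=\dfrac{2}{(k_1-k_2)\sqrt G}\,\dfrac{\partial}{\partial v}$, so that
\[
(\xi_2)\theta_1=\frac{2}{(k_1-k_2)\sqrt G}\,\frac{\partial\theta_1}{\partial v}.
\]
But the standing hypothesis $\theta_1(u,v)=\theta_1(u)$ is exactly the statement that $\theta_1$ is constant along the coordinate curves $u=\mathrm{const}$ (the characteristic circles of the canal, cf.\ Proposition~\ref{prop02canal}), hence $\partial\theta_1/\partial v=0$ and $(\xi_2)\theta_1=0$. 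Both conditions of Proposition~\ref{prop:pfdi} thus hold and $\mathcal P$ is integrable.

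I do not expect any real obstacle here: the statement is essentially a corollary of Proposition~\ref{prop:pfdi} once the two conformal invariants are specialized to the canal situation $\theta_2=0$, $\theta_1=\theta_1(u)$. As an independent confirmation one can also argue from Proposition~\ref{prop:clr}: the first integral $\mathcal J(u,\alpha)=A(u)\cos^3\alpha$ of the Darboux field $\mathcal D_1$ is even in $\alpha$, hence invariant under the involution $\varphi(u,v,\alpha)=(u,v,-\alpha)$, and therefore is a first integral of $\mathcal D_2=\varphi_*\mathcal D_1$ as well; its regular level sets (an open dense subset of $V(M)$, since $A>0$ forces $\partial\mathcal J/\partial\alpha\neq 0$ wherever $\sin\alpha\cos\alpha\neq 0$) are then integral surfaces of $\mathcal P=\spa\{\mathcal D_1,\mathcal D_2\}$, and integrability propagates to all of $V(M)$ by continuity of the Frobenius obstruction.
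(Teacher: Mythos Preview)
Your proof is correct and follows exactly the approach of the paper, which simply says ``Direct from the characterization of integrability of $\mathcal P$ established in Proposition~\ref{prop:pfdi}''; you have merely made explicit the verification that $\theta_2\equiv 0$ and $\partial\theta_1/\partial v=0$ force both conditions of that proposition to hold. The additional argument via the first integral $\mathcal J(u,\alpha)$ is a nice independent confirmation but is not present in the paper.
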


\begin{proof} Direct from the characterization of integrability 
of $\mathcal P$ established  in Proposition \ref{prop:pfdi}.
\end{proof}

%\begin{prop}\label{prop:ccsr} The  cylinders, cones and surfaces of revolution satisfies the conditions of Proposition \ref{prop:clr}.
%\end{prop}

%\begin{proof} We will take canonical models in $\mathbb R^3$. For the cylinder consider a plane curve with curvature %$k(u)$. So $k_1(u,v)=k(u)$ and $k_2(u,v)=0$ and  $\theta_1(u,v)=4k^\prime(u)/(k(u)^2)$ and $\theta_2(u,v)=0$

%For the cone consider a spherical curve with geodesic curvature $k_g(u)$. So   $k_1(u,v)=k_g(u)$ and $k_2(u,v)=0$.

%For a surface of revolution consider  a
%plane curve $(c_1(u),0, c_2(u))$, $c_1(u)>0$  parametrized by arc length $u$  with curvature  $k(u)$.
%The principal curvatures are given by
% $k_1(u,v) = k(u)$ and  $ k_2 (u,v) = \frac{c_2^\prime (u)}{c_1(u)}.$ So $\theta_2=0$.
%\end{proof}

 \subsection{Darboux lines on surfaces of Revolution as Canal Surface}

Consider an one parameter family of spheres of radius $r(u)$ with
center at $(0,0,u)$.

The envelope of this family is a canal surface and can be
parametrized by:

$$ H(u,v)=  r(u)\cos\beta(u)( \cos v,  \sin v ,0)+(0,0, u-r(u)\sin\beta(u)), $$
where $\cos\beta(u)= \sqrt{1-r^\prime(u)^2}, \;\; \sin\beta=r^\prime,
\;\;|r^\prime (u)|<1, \;\;\beta\in (-\pi/2,\pi/2)$.

The normal unitary to the surface is:

$$N=( -  \cos\beta(u)\cos v,  -\cos\beta(u)\sin v,   \sin\beta(u)).$$

The coefficients of the first and second fundamental forms of
$H$ are given by:
$$\;\;E(u,v)= \frac{(1-{r^\prime}^2-r
r^{\prime\prime})^2}{1-{r^\prime}^2},
 \;\; \;\;\;\;\; F(u,v)=0, \;\;\;\; G(u,v)=r^2 (1-{r^\prime}^2)$$
$$e(u,v)= -\frac{r^{\prime\prime} (1-{r^\prime}^2-r r^{\prime\prime})}{1-{r^\prime}^2}
 ,\;\; \;\;\ f(u,v)=0,\;\;\; \;\;\;g(u,v)=  r (1-{r^\prime}^2).
$$

The principal curvatures are given by:

$$k_1(u,v)= - \frac{ r^{\prime\prime} }{ 1-{r^\prime}^2-r r^{\prime\prime}}\;\;\;\;\;
k_2(u,v)= \frac 1{r}. $$

It will be assumed that $k_2>  k_1$ and the surface is free of umbilic points.

The ridge set is defined by the equation
$$R(u,v)=\frac{\partial k_1}{\partial u}=   r^{\prime\prime\prime}(1-{r^\prime}^2
)+3r^\prime {r^{\prime\prime}}^2=0.$$

\begin{prop}\label{prop:revolution} The Darboux lines on
surfaces of revolution, free of umbilic points,  can be integrated by quadratures. The
function
$$
{\mathcal I}(u,\alpha)=  { r\cos\beta}{(k_1-k_2)}\cos^3\alpha=h(u)(k_1-k_2)\cos^3\alpha,\;\;$$
 is a
first integral  of the differential equation of
 Darboux lines. Here $h$ is the distance of the point 
of the surface to  the axis of revolution.

 Moreover, if $R^\prime(u)<0 $ the ridge is zigzag. If
$R^\prime(u)>0 $ the ridge is beak to beak.   See Fig.
\ref{fig:drev}.

% The
%Darboux lines are defined by the implicit differential equation %$$   $$

\end{prop}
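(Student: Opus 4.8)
The plan is to treat a surface of revolution as a canal surface and apply the machinery already assembled in Section~\ref{sc:8}, then read off the zigzag/beak-to-beak dichotomy from Theorem~\ref{th:zigbec}. First I would verify that for the parametrization $H(u,v)$ given above one has $\theta_2\equiv 0$ and $\theta_1=\theta_1(u)$; this is the hypothesis of Proposition~\ref{prop:clr}. Since $k_2=1/r$ is constant along the parallels $u=\mathrm{const}$ and $X_2$ differentiates only in the $v$-direction, $\theta_2=\frac{1}{\mu^2}X_2(k_2)=0$ is immediate; and $k_1=k_1(u)$, $E=E(u)$, $G=G(u)$ depend on $u$ alone, so $\theta_1=\frac{1}{\mu^2}X_1(k_1)=\frac{1}{\mu^2\sqrt E}\,k_1'(u)$ is a function of $u$ only. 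Thus Proposition~\ref{prop:clr} applies with $A(u)=\exp\!\big[\int\frac{k_1'}{k_1-k_2}\,du\big]$, and its first integral $\mathcal J(u,\alpha)=A(u)\cos^3\alpha$ is a first integral of the Darboux equation.

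Next I would identify this first integral with the stated one. One computes directly, using the formulas for $k_1$, $k_2$, $E$, $G$ above, that $\frac{k_1'}{k_1-k_2}=\big(\log\!\big(r\cos\beta\,(k_1-k_2)\big)\big)'$; equivalently $\frac{d}{du}\log h(u)+\frac{d}{du}\log(k_1-k_2)=\frac{k_1'}{k_1-k_2}$ where $h=r\cos\beta$ is the distance to the axis. Integrating gives $A(u)=c_0\cdot h(u)(k_1-k_2)$ up to a multiplicative constant, so $\mathcal I(u,\alpha)=h(u)(k_1-k_2)\cos^3\alpha$ is, up to scaling, the same first integral; hence it is constant along Darboux curves. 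Since the Darboux flow on the $(u,v,\alpha)$-chart has, by Proposition~\ref{prop:clr}, the separated form $u'=\cos\alpha/\sqrt E$, $v'=\sin\alpha/\sqrt G$, $\alpha'=\frac{1}{3\sqrt E}\frac{k_1'}{k_1-k_2}\frac{\cos^2\alpha}{\sin\alpha}$, solving $\mathcal I=c$ for $\cos\alpha$ as a function of $u$ and substituting into $dv/du=\sqrt{E/G}\,\tan\alpha$ reduces the Darboux curves to the single quadrature $v=\int \sqrt{E/G}\,\tan\alpha(u)\,du$. That is the ``integration by quadratures'' claim.

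For the ridge dichotomy I would specialize the criterion of Theorem~\ref{th:zigbec}: a ridge point of $\mathcal P_1$ is zigzag when $\sigma_1=\partial^2_u k_1/(k_1-k_2)<0$ and beak-to-beak when $\sigma_1>0$ — with the convention $k_1>k_2$. Here, however, the surface is set up with $k_2>k_1$, so the roles of the two principal curvatures (and hence the sign convention) are swapped; the ridge under consideration is the one of the foliation associated to the \emph{nonconstant} curvature $k_1$, whose ridge set is $R(u)=\partial_u k_1=0$. Along this set $E_v=G_v=0$, so the Hessian computation in Proposition~\ref{prop:rzigzag} simplifies: $\partial^2_u k_1$ at a ridge point equals $\frac{1}{\sqrt E}R'(u)$ up to a positive factor, and dividing by $(k_1-k_2)$ (negative, since $k_2>k_1$) flips the sign. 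Hence $\sigma_1$ and $R'(u)$ have opposite signs along the ridge after accounting for $k_1-k_2<0$, giving: $R'(u)<0\Rightarrow$ zigzag, $R'(u)>0\Rightarrow$ beak-to-beak, as stated. The main obstacle I anticipate is exactly this bookkeeping of signs and of which principal foliation is which: one must carefully track the $k_2>k_1$ convention through the Codazzi-based formula for $\sigma_1$ in Proposition~\ref{prop:rzigzag}, confirm that along a surface-of-revolution ridge the off-diagonal term $2d^2/(k_1-k_2)^2$ in that formula either vanishes or does not affect the sign (it should vanish, since $\partial_u k_2$ and the mixed coefficient $d$ vanish by rotational symmetry), and only then conclude that the sign of $\sigma_1$ is governed purely by $R'(u)$ together with the sign of $k_1-k_2$. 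Everything else is routine substitution into the formulas already established in Sections~\ref{sc:4},~\ref{sc:7} and~\ref{sc:8}.
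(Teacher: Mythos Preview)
Your derivation of the first integral is essentially the paper's own argument: both separate variables to obtain $3\tan\alpha\,d\alpha=\dfrac{k_1'}{k_1-k_2}\,du$ and then identify $\exp\!\int\dfrac{k_1'}{k_1-k_2}\,du$ with $h(u)(k_1-k_2)$ up to a constant. The paper carries this out via the split $\dfrac{k_1'}{k_1-k_2}=\dfrac{(k_1-k_2)'}{k_1-k_2}+\dfrac{k_2'}{k_1-k_2}$ and computes the second term from $k_2=1/r$ to get $\ln\!\big(r\sqrt{1-r'^2}\big)=\ln h$; your identity $(\log h)'+(\log(k_1-k_2))'=\dfrac{k_1'}{k_1-k_2}$ is the same computation rephrased, and your appeal to Proposition~\ref{prop:clr} just packages the separation step.

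For the ridge dichotomy the paper says only that it follows from Theorem~\ref{th:zigbec}, so there is little to compare; but your sign bookkeeping, as written, is internally inconsistent. The function $R$ introduced before the proposition is \emph{not} literally $\partial k_1/\partial u$: with $D=1-r'^2-rr''>0$ (positivity forced by $k_2>k_1$) one has $\partial k_1/\partial u=-R/D^2$, hence at a ridge point $\partial^2_u k_1=-R'/D^2$, a \emph{negative} multiple of $R'$, not a positive one. Combining with $k_1-k_2=-(1-r'^2)/(rD)<0$ gives
\[
\sigma_1=\frac{\partial_u^2 k_1}{k_1-k_2}=\frac{r\,R'}{D(1-r'^2)},
\]
which has the \emph{same} sign as $R'$. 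Your concluding sentence ``$R'<0\Rightarrow$ zigzag, $R'>0\Rightarrow$ beak-to-beak'' is therefore correct, but it does not follow from your intermediate claim that $\sigma_1$ and $R'$ have opposite signs (that claim would yield the reverse conclusion). The obstacle you anticipated is exactly where the argument slips: fix it by tracking the factor $-1/D^2$ between $R$ and $\partial k_1/\partial u$.
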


\begin{figure}[htb]
\begin{center}

 \includegraphics[scale=0.5]{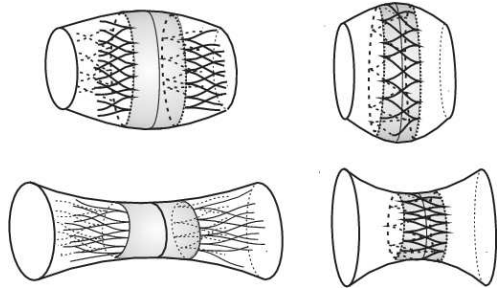}
\end{center}
\caption{ Darboux lines near regular ridges on surfaces of
revolution \label{fig:drev} }
\end{figure}

\begin{proof}  In the principal chart $(u,v)$ the differential
equation of Darboux lines is given by

   $$ u^\prime =  \frac{1}{\sqrt{E}}\cos\alpha,\;\; v^\prime =   \frac{1}{\sqrt{G}}\sin\alpha,\;
 \alpha^\prime = \frac 1{3\sqrt{E}} [\frac{ k_1^\prime}{k_1-k_2}]\frac{\sin^2\alpha}{\cos\alpha} $$

So it follows that $3\frac{\sin\alpha}{\cos\alpha}d\alpha=\frac{ k_1^\prime}{k_1-k_2}du$.

Now observe that
$$\aligned \int \frac{ k_1^\prime}{k_1-k_2}du=&\int
 \frac{ k_1^\prime-k_2^\prime}{k_1-k_2}du+\int \frac{ k_2^\prime}{k_1-k_2}du
= \ln(k_2-k_1)+\int \frac{ k_2^\prime}{k_1-k_2}du\\
\int \frac{ k_2^\prime}{k_1-k_2}du=& 
 \int   [\frac{r^\prime}{r}    + \frac{r^\prime r^{\prime\prime}}{1-{r^\prime}^2}]du=
 \ln r{(1-{r^\prime}^2)}^{1/2}\endaligned
$$
Therefore it follows that

$$\aligned   {\mathcal I}(u,v,\alpha)= &
 {r(1-{r^\prime}^2)^{1/2}}{(k_1-k_2)}\cos^3\alpha \\
=&   {r\cos\beta}{(k_1-k_2)}
\cos^3\alpha=h(u)(k_1-k_2)\cos^3\alpha \endaligned$$
is the first integral. The analysis of behavior near ridges
 follows from Theorem \ref{th:zigbec}.
\end{proof}

\begin{prop}\label{prop:cone} The Darboux lines on
a cone, free of umbilic points,  can be integrated by quadratures. The
function
$$
{\mathcal I}(u,\alpha)=  k_g(u)\cos^3\alpha  $$
 is a
first integral  of the differential equation of Darboux lines.
 Here $k_g$ is the geodesic curvature of the intersection of the cone with the unitary sphere.

 Moreover, if $k_g^\prime/k_g<0 $ the ridge is zigzag. If
$k_g^\prime/k_g>0 $ the ridge is beak to beak.   See Fig.
\ref{fig:dcone}.
\begin{figure}[htb]
\begin{center}

 \includegraphics[scale=0.4]{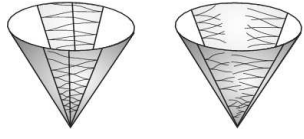}
\end{center}
\caption{ Darboux lines near regular ridges on a general cone
  \label{fig:dcone} }
\end{figure}
% The
%Darboux lines are defined by the implicit differential equation %$$   $$

\end{prop}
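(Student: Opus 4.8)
The plan is to realize a cone of revolution as a canal surface whose characteristic circles are the intersections with the unit sphere $\SS^2$ centered at the apex, and then to mimic the computation carried out for surfaces of revolution in Proposition \ref{prop:revolution}, replacing the distance-to-axis function $h(u)$ by the geodesic curvature $k_g(u)$ of that spherical section. Concretely, I would parametrize the cone so that $u$ is a principal parameter along the generating curve (a circle of latitude on $\SS^2$, scaled) and $v$ parametrizes the generators (rays from the apex); developing the cone onto a plane is a local isometry away from the apex, and the principal chart carries over. As in Proposition \ref{prop:clr}, the hypotheses $\theta_2\equiv 0$ (canal condition, Proposition \ref{prop01canal}) and $\theta_1=\theta_1(u)$ hold, so the Darboux differential equation \eqref{eq:dpc} collapses to the separable system
\[
u'=\frac{\cos\alpha}{\sqrt E},\qquad v'=\frac{\sin\alpha}{\sqrt G},\qquad
\alpha'=\frac{1}{3\sqrt E}\,\frac{k_1'}{k_1-k_2}\,\frac{\sin^2\alpha}{\cos\alpha},
\]
whence $3\tan\alpha\,d\alpha=\dfrac{k_1'}{k_1-k_2}\,du$ and a first integral of the form (const)$\cdot e^{\frac13\int k_1'/(k_1-k_2)}\cos^3\alpha$.

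The main computational step is to evaluate $\displaystyle\int \frac{k_1'}{k_1-k_2}\,du$ for the cone and identify the exponential of one third of it, up to a multiplicative constant, with $k_g(u)$. I would split this as in the proof of Proposition \ref{prop:revolution}, writing $\int \frac{k_1'}{k_1-k_2}du=\int\frac{k_1'-k_2'}{k_1-k_2}du+\int\frac{k_2'}{k_1-k_2}du=\ln(k_2-k_1)+\int\frac{k_2'}{k_1-k_2}du$, and then computing the remaining integral from the explicit principal curvatures of the cone (one of them, the one coming from the spherical section, being essentially $k_g$ up to the radial scaling, the other being the "straight" direction along the generators, which contributes a term that telescopes against $\ln(k_1-k_2)$). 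After simplification the surviving factor should be exactly $k_g(u)$, giving
\[
{\mathcal I}(u,\alpha)=k_g(u)\cos^3\alpha
\]
as the first integral; integration by quadratures of the Darboux lines then follows by solving ${\mathcal I}=c$ for $\cos\alpha$ and using $\dfrac{dv}{du}=\sqrt{E/G}\,\tan\alpha$, exactly as in Proposition \ref{prop:clr}.

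For the statement about ridges, I would invoke Theorem \ref{th:zigbec}: the ridge set for ${\mathcal P}_1$ is $\partial k_1/\partial u=0$, it is transversal to ${\mathcal P}_1$ when $\sigma_1\neq0$, and the dichotomy zigzag / beak-to-beak is governed by the sign of $\sigma_1$. The task is therefore to express the sign of $\sigma_1$ at a ridge in terms of $k_g'/k_g$. Since along the cone $k_1$ is (up to the radial factor and additive/multiplicative constants coming from the canal structure) proportional to $k_g$, one shows $\operatorname{sign}\sigma_1(p_0)=-\operatorname{sign}\big(k_g'(u_0)/k_g(u_0)\big)$ after the reduction $k_1'=0$; hence $k_g'/k_g<0$ gives $\sigma_1>0$, i.e. $\ldots$ wait, one must be careful with the sign convention $k_2>k_1$ used here, which flips $k_1-k_2$: tracking this sign carefully is where an error could creep in, and it is the only genuinely delicate point. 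With the convention fixed so that the surface-of-revolution case of Proposition \ref{prop:revolution} is consistent, $k_g'/k_g<0$ yields the zigzag and $k_g'/k_g>0$ the beak to beak, matching Figure \ref{fig:dcone}.

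The step I expect to be the main obstacle is the explicit identification of $\exp\big(\tfrac13\int k_1'/(k_1-k_2)\,du\big)$ with $k_g(u)$ up to a constant, together with keeping the sign conventions $k_2>k_1$ straight so that the zigzag/beak-to-beak criterion comes out with the right sign; everything else is a direct transcription of Propositions \ref{prop:clr} and \ref{prop:revolution} and of Theorem \ref{th:zigbec}.
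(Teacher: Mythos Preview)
Your plan via Proposition~\ref{prop:clr} is sound, but you are working harder than necessary and there is a confusion in the setup.

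First, terminology: the proposition concerns a \emph{general} cone over an arbitrary unit-speed spherical curve $\gamma\subset\SS^2$, not a cone of revolution. For a cone of revolution $k_g$ is constant and there are no ridges. The ``characteristic circles'' and ``circle of latitude'' you mention do not exist in general; the $u$-curves are scaled copies of the arbitrary curve $\gamma$.

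Second, and this is the substantive point: the paper's argument is essentially one line because it uses that the generators of any cone are straight, so $k_2\equiv 0$. Parametrizing $X(u,v)=v\gamma(u)$ with $|\gamma|=|\gamma'|=1$ and $\gamma''=-\gamma+k_g\,\gamma\wedge\gamma'$, one gets $N=\gamma\wedge\gamma'$, $k_2=g/G=0$ and $k_1=k_g(u)/v$. Hence
\[
\frac{\partial k_1/\partial u}{k_1-k_2}=\frac{k_g'(u)/v}{k_g(u)/v}=\frac{k_g'(u)}{k_g(u)},
\]
already a function of $u$ alone, and $A(u)=\exp\!\int (k_g'/k_g)\,du=k_g(u)$ immediately. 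There is nothing to split and nothing to telescope. Your decomposition $\ln(k_2-k_1)+\int k_2'/(k_1-k_2)\,du$, borrowed from the revolution case, is a detour here---indeed it needs extra care, since $k_1-k_2=k_g/v$ depends on $v$, so $\ln|k_1-k_2|$ is an antiderivative in $u$ only up to a $v$-dependent additive constant. The single observation $k_2=0$ replaces all of that.

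For the ridge dichotomy the paper, like you, simply invokes Theorem~\ref{th:zigbec}; your plan there is fine.
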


\begin{proof} The cone can be parametrized by $X(u,v)=v\gamma(u)$
 where $|\gamma|=1$  and  $|\gamma^\prime|=1$ is a spherical curve.
We have  that $k_1(u,v)=k_g(u)$ and $k_2(u,v)=0$,
 since $\gamma^{\prime\prime} =-\gamma+k_g\gamma\wedge \gamma^\prime$
 and $N(u,v)=\gamma\wedge\gamma^\prime.$

The Darboux curves  are given by: $(\sin\alpha/\cos\alpha) d\alpha=\frac 13 (k_g^\prime/k_g) du$.
So it follows that ${\mathcal I}(u,\alpha)=k_g(u)\cos^3\alpha$ is the first integral.
The analysis of behavior near ridges follows from Theorem \ref{th:zigbec}.
\end{proof}

\begin{prop}\label{prop:rigdeLambda} The ridges on a surface of revolution
 are characterized by the singularities of the curve 
${\mathcal O}_1 \subset \Lambda^2 \subset \Lambda^4$ defined 
by the osculating circles of envelope of plane curves with
 center $(0,u)$ and radius $r(s)$.

Moreover the ridges correspond to ``vertices" of the curve
 $\gamma \subset \Lambda^2 \subset \Lambda^4$ defining the surface of revolution as an envelope.
\end{prop}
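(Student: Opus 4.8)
The plan is to exploit the identification of the surface of revolution with a canal surface, already set up in the preceding subsection, and to transport the characterization of ridges (the condition $R(u)=r'''(1-r'^2)+3r'r''^2=0$) to a statement about the curve $\gamma\subset\Lambda^2\subset\Lambda^4$ whose points are the generating spheres. First I would recall that the envelope of the one-parameter family of spheres of radius $r(u)$ centered at $(0,0,u)$ corresponds, via the correspondence $\sigma=k_g m+\vect n$ of Proposition \ref{prop_sigma=k_g m+n}, to a curve $\gamma(u)\in\Lambda^4$; since all these spheres are centered on the $z$-axis, the vectors $\gamma(u)$ lie in a fixed $3$-dimensional subspace of $\ll^5$ meeting $\Lambda^4$ in a copy of $\Lambda^2$ (this is exactly the canal-surface picture invoked in Section \ref{sc:8}). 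So $\gamma$ is a spacelike curve in the $2$-dimensional Lorentz quadric $\Lambda^2$. The characteristic circles of the canal are the circles $\Sigma(u)\cap\Sigma(u+du)$, i.e.\ they correspond to the lines $\mathrm{span}(\gamma(u),\Pt\gamma(u))$; the surface of revolution is the union of these characteristic circles, and along each one $k_1$ is constant (it equals $k_1(u)$).

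Next I would compute the osculating circle, at a point of the characteristic circle over $u$, of the plane curve which is the profile of the envelope — equivalently the osculating sphere tangent to $M$ along that characteristic circle. By the remark following Definition \ref{meilleure_sphere} and the discussion of $\Sigma_{m,v}$, the osculating sphere along the principal direction $X_1$ has curvature $k_1(u)$, so it corresponds to $\sigma_1(u)=k_1(u)m+\vect n$; as $u$ varies this traces the curve ${\mathcal O}_1\subset\Lambda^2\subset\Lambda^4$ mentioned in the statement, and ${\mathcal O}_1$ is precisely the boundary component $\alpha=0$ of $V(M)$ restricted to the surface of revolution. Now apply Proposition \ref{prop:ridgeboundary}: the map $u\mapsto\sigma_1(u)$ has a singular point (rank drop) exactly at the ridges, because $\frac{d\sigma_1}{du}=\frac{dk_1}{du}\,m+(k_1-k_1)m_u=k_1'(u)\,m$, which vanishes iff $k_1'(u)=0$, i.e.\ iff $R(u)=0$. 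This gives the first assertion: the ridges are the singular points of the curve ${\mathcal O}_1$.

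For the second assertion I would differentiate the relation defining $\gamma$ in $\Lambda^2$ and compare with the focal/evolute picture of a plane curve. In the plane-curve model, $\gamma(u)$ is the point of $\Lambda^2$ corresponding to the circle of the envelope over $u$; its ``curvature'' in $\Lambda^2$ (the Lorentzian analogue of plane-curve curvature, governing whether $\Pt\gamma,\Ppt\gamma,\Pppt\gamma$ are linearly dependent) degenerates exactly when the osculating-circle curve ${\mathcal O}_1$ is singular, and a ``vertex'' of a curve in $\Lambda^2$ is by definition (cf.\ \cite{laso}, \cite{Tho}) a point where the osculating circle has higher-order contact, i.e.\ a singular point of the evolute ${\mathcal O}_1$. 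So the ridges correspond to vertices of $\gamma$. Concretely I would verify that the third-order contact condition for $\gamma$ in $\Lambda^2$ reduces, after substituting the parametrization $\cos\beta=\sqrt{1-r'^2}$, $\sin\beta=r'$, to $r'''(1-r'^2)+3r'r''^2=0$, matching $R(u)=0$ from the formula for $k_1$ given just before this proposition.

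The main obstacle I expect is the last bookkeeping step: writing down the Lorentzian Frenet-type data of $\gamma$ in $\Lambda^2$ cleanly enough that the vertex condition visibly collapses to $R(u)=0$, and checking that the notion of ``vertex'' one uses there is conformally the right one (so that the statement is genuinely coordinate-free, as the phrasing ``conformally defined'' in Section \ref{sc:2} suggests). One has to be a little careful that $\Lambda^2$ carries an indefinite form, so ``osculating circle'' and ``vertex'' must be interpreted via the contact condition of Lemma \ref{contacte} rather than via a Euclidean radius of curvature; once that is granted, the computation is a routine differentiation using $N_u=-k_1 m_u$ and the explicit $k_1,k_2$ above.
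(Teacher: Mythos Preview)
For the first assertion your argument is essentially the paper's: both write the curve of osculating spheres as $\mathcal{O}_1(u)=k(u)\,m(u)+n(u)$ (with $k=k_1$), differentiate using $n'=-k\,m'$ to get $\mathcal{O}_1'=k'(u)\,m(u)$, and observe that this lightlike vector vanishes exactly when $k'(u)=0$, i.e.\ at the ridges.

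For the second assertion the two routes diverge. The paper does \emph{not} compute Frenet data of $\gamma$ in $\Lambda^2$ nor reduce the vertex condition algebraically to $R(u)=0$. Instead it argues geometrically: at a vertex of $\gamma$ the radius $\rho(s)$ of the osculating circle $\mathcal{O}(s)\subset\Lambda^2$ is critical; the paper then passes to the Dupin necklace determined by $\mathcal{O}(s)$ and its companion circle $\mathcal{O}^*(s)$ lying in the orthogonal affine plane $Q(s)$, introduces the centers $x(s)\in P(s)$ and $y(s)\in Q(s)$ (with $\mathcal{L}(x,y)=1$), and uses Lorentz orthogonality to show that for the lightlike curve $\sigma(s)$ of osculating spheres of the other family one has $\mathcal{L}\bigl(y(s_0),\frac{d\sigma}{ds}\big|_{s_0}\bigr)=0$, which forces $\frac{d\sigma}{ds}\big|_{s_0}=0$ since a nonzero lightlike vector cannot be Lorentz-orthogonal to a timelike one. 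Hence $s_0$ is a ridge. Your route --- identify $\mathcal{O}_1$ as the ``evolute'' of $\gamma$ in $\Lambda^2$ and invoke the vertex/evolute-singularity correspondence, or directly expand the third-order contact condition and check it collapses to $R(u)=0$ --- is more elementary and computational, and should go through once you justify the step you flagged yourself: that the $\Lambda^2$-osculating circle of $\gamma$ at $\gamma(u)$ is indeed the osculating circle of the envelope at the characteristic point (this is what makes $\mathcal{O}_1$ the relevant evolute). The paper's Dupin-necklace argument buys a coordinate-free link to the ambient $\Lambda^4$ geometry, at the price of the auxiliary construction; your approach buys brevity and an explicit match with the formula $R(u)=r'''(1-r'^2)+3r'r''^2$.
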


\begin{proof} The envelope of  plane curves with center
 $(0,u)$ and radius $r(u)$ is a plane curve $c(u)$ with curvature given
 by $k(u)=- \frac{ r^{\prime\prime} }{ 1-{r^\prime}^2-r r^{\prime\prime}}.$

According to equation \eqref{sig_sig} the curve 
${\mathcal O}_1(u)=k(u)m(u)+n(u)$ with ${\mathcal L}(m,m)=0$, ${\mathcal L}(n,n)=1$ and $n^\prime(u)=-k(u)m^\prime(u)$.
 So it follows that ${\mathcal O}_1^\prime =k^\prime(u) m(u)$ is a
 light vector ant is   zero precisely at the points where $k^\prime(u)=0$.

Let us remark that if a point of $\gamma(s)\in \gamma$ is a vertex
 then the radius $\rho(s)$ of the osculating circle ${\mathcal O}s)$
 is critical. Let us denote by $P(s)$ the osculating plane to $\gamma$
 at $\gamma(s)$, and by $x(s)\in P(s)$ the center of the osculating plane,
 which is also the (unique) critical point of the restriction of $\LL$ to $P(s)$.
 The Dupin necklace formed by ``the other osculating spheres" along the
 characteristic circle  $C(s)\subset \Sigma (s)$, where the sphere
 $\Sigma(s)$ correspond to the point $\gamma(s)\in \Lambda^4$ is the
 envelope of the spheres of the circle ${\mathcal O}s)$ and of the sphere
 of the associated circle ${\mathcal O}^*s)$. This circle is contained in
 the affine plane $Q(s)$, orthogonal to $span(O,P(s)$ and which contains 
the point $y(s)$ which belongs to the line spanned by $x(s)$ and satisfies
 $\LL(x(s),y(s))=1$. The square Lorentz norm $\LL(y(s_0))$ is critical
 when $\LL(x(s_0))$ and $\rho(s_0)$ are. Let $\sigma(s)$ be a curve of 
 osculating spheres of the other family along a line principal curvature.
  It is a light-like curve. Notice that, as $Q(s)$ is orthogonal
 to $y(s)$, $\LL(y(s),\sigma(s)= \LL(y(s))$. Derivating $\LL(y(s),\sigma(s))$
 with respect to $s$ we get, for a critical
 $s_0$, $\LL(y(s),\frac{d \sigma}{ds})|_{s=s_0}+\LL(\frac{d y}{ds},\sigma(s))|_{s=s_0}=0$.
 As $y(s)$ belongs to the 3-dimensional space $A=span({\mathcal O}s),O)$,
 which is independant of $s$, $\frac{dy}{ds}$ belongs also to $A$. All the
 affine spaces $Q(s)$ are orthogonl to $A$, therefore 
$\LL(\frac{d y}{ds},\sigma(s))|_{s=s_0}=\LL(\frac{d y}{ds},y(s))|_{s=s_0}=0$.
 Therefore $\LL(y(s),\frac{d \sigma}{ds})|_{s=s_0}=0$. This is possible if and
 only if $\frac{d \sigma}{ds}|_{s=s_0}=0$, as a Lorentz scalar product of
 non-zero time-like and light-like vector cannot be zero. The sphere $\sigma_0$ is
 therefore a ridge sphere. This reasonning is valid for all the curves in 
$\Lambda^4$ formed of osculating spheres of the other family along 
a line of principal curvature. The whole characteristic circle $C(s_0)$
 is therefore a ridge.  
\end{proof}

\begin{rema} Similarly in a cylinder $\alpha(u,v)=c(u)+v \overrightarrow{z}$,
 where $c$ is a plane curve with curvature $k$  the function 
$ {\mathcal I}(u,\alpha)=  k(u)\cos^3\alpha$ is a first integral of Darboux curves.
\end{rema}

\begin{rema} The geodesics on surfaces of revolution has a first
 integral given by  $ {\mathcal J}(u,\alpha)=h(u)\sin\alpha$.
 The parallel $u=u_0$ is a geodesic if and only if $h^\prime(u_0)=0$,
 and it is a hyperbolic geodesic when $h^{\prime\prime}(u_0)>0$. In
 a cone a first integral for the geodesics is given by
 $   {\mathcal J}(v,\alpha)=\cos\alpha/v$.
\end{rema}

\section{Darboux curves on quadrics}\label{sc:9}
%%%%%%%%%%%%%%%%%%%%%%%%%%%%%%%%%%%%

The Darboux curves in the ellipsoid were considered in \cite{Pe}  by Pell.
 Here we complete and hopefully simplify his work.

The quadrics $\mathbb Q_{a,b,c}$ belongs to  the triple
orthogonal system of surfaces defined by the one parameter family
of quadrics, $\frac{x^2}{a-\lambda}+\frac{y^2}{b-
\lambda}+\frac{z^2}{c-\lambda}=1$ with $a>b>c>0$, see also
\cite{spivak}  and \cite{struik}.

Consider the principal chart $(u,v)$ and the parametrization of
 $\mathbb Q_{a,b,c}$ given by equation \eqref{eq:porto}.

%$$\alpha(u,v) =
%\big(± \sqrt{\frac{M(u,v,a)}{W(a,b,c)}}, ±
%\sqrt{\frac{M(u,v,b)}{W(b,a,c)}}, ±
%\sqrt{\frac{M(u,v,c)}{W(c,a,b)}}\big)$$
 % \noindent  where,
%$M(u,v,w)= {w(u-w)(v-w)}$ and
%$W(\alpha,\beta,\gamma)=(\alpha-\beta)(\alpha-\gamma)$, defines
%the principal coordinates $(u,v)$ on $\mathbb Q_{a,b,c}$.

For the ellipsoid  $u\in (b,a)$,\; $v\in (c,b)$ or $u\in (c,b)$,\; $v\in (b,a)$.

For the hyperboloid of one sheet $ u\in (b, a), \; v< c  \;\;\; \text{or } \;\;
  u<c,\;  v\in (b,a).$

For the hyperboloid of two sheets $ u\in (c, b), \; v< c  \;\;\; \text{or } \;\;
  u<c,\;  v\in (c,b).$

The first fundamental form of $\mathbb Q_{a,b,c}$  is given by equation
 \eqref{eq:Ie} and the second is given by
equation \eqref{eq:IIe}.

%\begin{equation} \label{eq:Iee} I=ds^2=Edu^2+Gdv^2= \frac{ (v-u)u}{4H(u)}du^2+
 %\frac{ (u-v)v}{4H(v)}dv^2 \end{equation}

%The second fundamental form with respect to the normal
%$N=-(\alpha_u\wedge \alpha_v)/|\alpha_u\wedge \alpha_v|$ is given
%by

%\begin{equation} \label{eq:IIee} II= edu^2+gdv^2=\frac{(v-u)}{4 H(u)}\sqrt{\frac{abc}{uv}}
%du^2+\frac{(u-v)}{4 H(v)}\sqrt{\frac{abc}{uv}}dv^2,\end{equation}

%\noindent where $H(x)=(x-a)(x-b)(x-c)$.

Therefore the principal curvatures are given by:

$$k_1=\frac{e}E=  \frac 1u \sqrt{\frac{abc}{uv}},\;\; k_2=\frac gG=\frac 1v
\sqrt{\frac{abc}{uv}}.$$

Also consider the functions
$$ r_1=\frac{\partial k_1}{\partial u}/(3(k_1-k_2))=\frac 12\frac{v}{u(u-v)},\;\;r_2=\frac{\partial k_2}{\partial v}/3((k_1-k_2))=\frac 12\frac{u}{v(u-v)}.$$

  The four umbilic points are $(±x_0,0,±z_0)= (±
 \sqrt{\frac{a(a-b)}{a-c}},0,± \sqrt{\frac{c(b-c)}{a-c}}\;).$

\begin{prop}
The differential equation of Darboux curves on a quadric $\mathbb Q_{a,b,c}$     is given by:

\begin{equation}\label{eq:deq} \aligned
 u^\prime=& \frac{1}{\sqrt{E}}\cos^2\alpha\sin\alpha,\;\;
  v^\prime=  \frac{1}{\sqrt{G}}\cos\alpha\sin^2\alpha\\
  \alpha^\prime=& \frac{r_1}{\sqrt{E}}\cos^3\alpha+\frac{r_2}{\sqrt{G}}\sin^3\alpha\\
\endaligned
\end{equation}

\end{prop}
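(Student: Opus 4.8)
The plan is to derive equation \eqref{eq:deq} directly from the general differential equation of Darboux curves established in Proposition \ref{prop:dpc}, equation \eqref{eq:dpc}, by substituting the specific data of the quadric $\mathbb Q_{a,b,c}$. First I would recall that a Darboux curve parametrized by arc length $s$, making angle $\alpha(s)$ with the principal direction $(1,0)$, satisfies both the kinematic relations $(u',v') = (\cos\alpha/\sqrt{E}, \sin\alpha/\sqrt{G})$ and the scalar equation
$$ 3(k_1-k_2)\sin\alpha\cos\alpha\,\frac{d\alpha}{ds} = \frac{1}{\sqrt{E}}\frac{\partial k_1}{\partial u}\cos^3\alpha + \frac{1}{\sqrt{G}}\frac{\partial k_2}{\partial v}\sin^3\alpha. $$
Rather than keeping arc length as the parameter, I would reparametrize by a new parameter (still called $t$, or absorbing a positive factor) so that the right-hand side of the kinematic equations is multiplied by $3(k_1-k_2)\sin\alpha\cos\alpha$; this is exactly the device used in the proof of Proposition \ref{prop:dlocal}, where the vector field $X$ is written with $u' = \frac{1}{\sqrt E}\cos\alpha\,[3(k_1-k_2)\sin\alpha\cos\alpha]$, etc. With this reparametrization the system becomes $u' = \frac{1}{\sqrt E}\cos^2\alpha\sin\alpha$ (up to the constant factor $3$, which can be absorbed), $v' = \frac{1}{\sqrt G}\cos\alpha\sin^2\alpha$, and $\alpha' = \frac{1}{\sqrt E}\,\frac{\partial k_1/\partial u}{3(k_1-k_2)}\cos^3\alpha + \frac{1}{\sqrt G}\,\frac{\partial k_2/\partial v}{3(k_1-k_2)}\sin^3\alpha$.

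The remaining step is to identify the coefficients appearing in $\alpha'$ with the functions $r_1$ and $r_2$ defined just before the statement, namely $r_1 = \frac{\partial k_1}{\partial u}/(3(k_1-k_2)) = \frac12 \frac{v}{u(u-v)}$ and $r_2 = \frac{\partial k_2}{\partial v}/(3(k_1-k_2)) = \frac12 \frac{u}{v(u-v)}$. This is a direct computation: from $k_1 = \frac1u\sqrt{\frac{abc}{uv}} = \sqrt{abc}\,u^{-3/2}v^{-1/2}$ and $k_2 = \sqrt{abc}\,u^{-1/2}v^{-3/2}$ one computes $\partial k_1/\partial u = -\tfrac32\sqrt{abc}\,u^{-5/2}v^{-1/2}$ and $\partial k_2/\partial v = -\tfrac32\sqrt{abc}\,u^{-1/2}v^{-5/2}$, while $k_1 - k_2 = \sqrt{abc}\,u^{-1/2}v^{-1/2}(u^{-1}-v^{-1}) = \sqrt{abc}\,u^{-1/2}v^{-1/2}\frac{v-u}{uv}$; dividing and simplifying yields exactly the stated closed forms for $r_1$ and $r_2$ (these formulas are in fact already recorded in the excerpt preceding the proposition, so one may simply invoke them). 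Substituting $\frac{\partial k_1/\partial u}{3(k_1-k_2)} = r_1$ and $\frac{\partial k_2/\partial v}{3(k_1-k_2)} = r_2$ into the $\alpha'$ equation gives precisely \eqref{eq:deq}.

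I do not anticipate a genuine obstacle here — this proposition is a specialization, and the only thing to be careful about is the normalization: the factor of $3$ in the definitions of $r_1, r_2$ is chosen exactly so that it cancels the $3$ coming from Proposition \ref{prop:dpc}, and the reparametrization (which multiplies the velocity field by the scalar $3(k_1-k_2)\sin\alpha\cos\alpha$, a function that is nonzero away from umbilics and principal directions) must be stated so that the integral curves of \eqref{eq:deq} still project to Darboux curves. So the proof amounts to: (i) start from \eqref{eq:dpc}; (ii) multiply through by the integrating factor $3(k_1-k_2)\sin\alpha\cos\alpha$ to clear the denominator in $d\alpha/ds$, obtaining a polynomial vector field whose orbit projections are unchanged; (iii) plug in $E, G$ from \eqref{eq:Ie} and the expressions for $r_1, r_2$; (iv) read off \eqref{eq:deq}. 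If one wants to be fully explicit, one can also write out $\sqrt E = \tfrac12\sqrt{\frac{(v-u)u}{H(u)}}$ and $\sqrt G = \tfrac12\sqrt{\frac{(u-v)v}{H(v)}}$ from \eqref{eq:Ie}, but since the statement leaves $E, G$ symbolic, that is unnecessary.
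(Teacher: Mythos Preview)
Your proposal is correct and follows essentially the same route as the paper: start from \eqref{eq:dpc}, rescale the arc-length vector field by a scalar factor vanishing on $\alpha\in\{0,\pi/2\}$ so as to obtain a regular vector field on $(u,v,\alpha)$-space, and recognize the coefficients in the $\alpha'$-equation as the pre-defined $r_1,r_2$. One small bookkeeping slip: the rescaling that produces exactly \eqref{eq:deq} is multiplication by $\sin\alpha\cos\alpha$, not by $3(k_1-k_2)\sin\alpha\cos\alpha$ (the latter is the factor used in the vector field $X$ of Proposition~\ref{prop:dlocal}); the extra $3(k_1-k_2)$ is not constant, so it is not merely ``a factor of $3$ to absorb'' --- but since it is nonvanishing away from umbilics the two vector fields share the same orbits and your conclusion is unaffected.
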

\begin{proof} In a principal chart the differential equation of
Darboux curves is given by equation \eqref{eq:dpc}. Define $\tan\alpha=\frac{\sqrt{G}v^\prime}{\sqrt{E}u^\prime}$ 
so that $E{u^\prime}^2+G{v^\prime}^2=1$.
To obtain a regular extension of the differential equation
 \eqref{eq:dpc} to $\alpha=m\pi/2$ and consider it as a vector
 field in the variables $(u,v,\alpha)$  multiply the resultant
 equation by the factor $\cos\alpha\sin\alpha$ and the result follows.
\end{proof}

\begin{prop} The function
\begin{equation} \label{eq:ie} I(u,v,u^\prime,v^\prime)=I(u,v,\alpha)=\frac{\cos^2\alpha}{u}+
\frac{\sin^2\alpha}{v}, \;\; \tan \alpha=
\frac{v^\prime}{u^\prime} \sqrt{ \frac{G(u,v) }{E(u,v)
}}\end{equation} is a first integral of equation \eqref{eq:deq}.

\end{prop}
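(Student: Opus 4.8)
The plan is to verify directly that $I$ is constant along the trajectories of the vector field \eqref{eq:deq}, i.e.\ that $u' \,\partial_u I + v'\,\partial_v I + \alpha'\,\partial_\alpha I = 0$ when $(u',v',\alpha')$ are given by \eqref{eq:deq}. First I would record the partial derivatives of $I(u,v,\alpha) = \cos^2\alpha/u + \sin^2\alpha/v$, namely $\partial_u I = -\cos^2\alpha/u^2$, $\partial_v I = -\sin^2\alpha/v^2$, and $\partial_\alpha I = 2\sin\alpha\cos\alpha\,(1/v - 1/u) = 2\sin\alpha\cos\alpha\,\frac{u-v}{uv}$.

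Next I would substitute the expressions from \eqref{eq:deq}. The first two terms contribute
\[
-\frac{\cos^2\alpha}{u^2}\cdot\frac{\cos^2\alpha\sin\alpha}{\sqrt E}
\;-\;\frac{\sin^2\alpha}{v^2}\cdot\frac{\cos\alpha\sin^2\alpha}{\sqrt G},
\]
while the third term contributes
\[
2\sin\alpha\cos\alpha\,\frac{u-v}{uv}\left(\frac{r_1}{\sqrt E}\cos^3\alpha + \frac{r_2}{\sqrt G}\sin^3\alpha\right).
\]
Now I would plug in $r_1 = \tfrac12\frac{v}{u(u-v)}$ and $r_2 = \tfrac12\frac{u}{v(u-v)}$. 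The point is that $2\cdot\frac{u-v}{uv}\cdot r_1 = 2\cdot\frac{u-v}{uv}\cdot\frac12\frac{v}{u(u-v)} = \frac{1}{u^2}$, and similarly $2\cdot\frac{u-v}{uv}\cdot r_2 = \frac{1}{v^2}$. So the third term becomes exactly $\sin\alpha\cos\alpha\left(\frac{\cos^3\alpha}{u^2\sqrt E} + \frac{\sin^3\alpha}{v^2\sqrt G}\right)$, which cancels the first two terms termwise. Hence $\frac{d}{ds}I = 0$ along trajectories.

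Finally I would note that the identification $I(u,v,u',v') = I(u,v,\alpha)$ via $\tan\alpha = \frac{v'}{u'}\sqrt{G/E}$ together with $E u'^2 + G v'^2 = 1$ (arclength normalization) makes the two displayed formulas for $I$ agree, since then $\cos^2\alpha = E u'^2$ and $\sin^2\alpha = G v'^2$ are well-defined functions of $(u,v,u',v')$. I do not anticipate a genuine obstacle here; the only mild subtlety is bookkeeping the factor $\cos\alpha\sin\alpha$ that was introduced in \eqref{eq:deq} to regularize the field at $\alpha = m\pi/2$ — one should check that $I$ extends smoothly across those values, which it manifestly does since $I$ itself has no singularity there. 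The computation is short once the algebraic identities $2\frac{u-v}{uv}r_1 = 1/u^2$ and $2\frac{u-v}{uv}r_2 = 1/v^2$ are isolated, and the whole proof amounts to exhibiting that cancellation.
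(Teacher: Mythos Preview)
Your proof is correct. Both you and the paper verify directly that $dI/ds=0$ along trajectories, so the overall strategy is the same. The execution differs slightly: the paper works with the representation $I(u,v,u',v') = \frac{Eu'^2}{u}+\frac{Gv'^2}{v}$ and therefore first computes the $s$-derivatives of the metric coefficients $E$ and $G$ (which involve $H(u)$, $H'(u)$, etc.) before invoking a ``straightforward calculation''; you instead differentiate $I(u,v,\alpha)$ in the $(u,v,\alpha)$ phase space and exploit the already-simplified coefficients $r_1,r_2$, reducing the whole computation to the two algebraic identities $2\tfrac{u-v}{uv}r_1 = 1/u^2$ and $2\tfrac{u-v}{uv}r_2 = 1/v^2$. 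Your route is shorter and avoids touching $E$, $G$, or $H$ explicitly, at the modest cost of relying on the prior simplification of $r_1,r_2$; the paper's route is more self-contained but heavier.
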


\begin{proof}
We have that  $I(u,v,\alpha)=\frac{\cos^2\alpha}{u}+\frac{\sin^2\alpha}{v}.$

We will show that $\frac{d}{ds}(I(u(s),v(s),\alpha(s) )=0$ along 
 a solution $(u(s),v(s),\alpha(s))$.

We have that
$$\aligned  E_s=& \frac
14\frac{uv^\prime}{H(u)}-\frac 14
\frac{u^\prime}{H(u)^2}[(u-2v)H(u)+(uv-u^2)H^\prime(u)] \\
G_s=& \frac 14\frac{vu^\prime}{H(v)}-\frac 14
\frac{v^\prime}{H(v)^2}[(u-2v)H(v)+(v^2-uv)H^\prime(v)]
\endaligned$$
Straightforward calculation leads to
$\frac{d}{ds}(I(u(s),v(s),u^\prime(s), v^\prime(s))) =0.$

\end{proof}

\begin{prop} The Darboux curves  on a quadric $\mathbb Q_{a,b,c}$  are the
real integral curves of the implicit differential equation:

$$ (v-\lambda) H(u){v^\prime}^2- (u-\lambda) H(v){u^\prime}^2=0.$$

 The normal curvature in a Darboux direction ${\mathcal D}$
 defined by $ I(u,v,dv/du )=1/\lambda $ is
given by $k_n(p,{\mathcal D})= \frac
1{\lambda}\sqrt{\frac{abc}{uv}}$.

This differential equation is equivalent to
$$k_n(u,v,[du:dv])= \frac{e(u,v)du^2+g(u,v)dv^2}{E(u,v)du^2+G(u,v)dv^2} =\frac
1{\lambda}\sqrt{\frac{abc}{uv}}=\frac
1{\lambda}   {(abc)}^{1/4}\, {\mathcal K}^{1/4}.  $$
\end{prop}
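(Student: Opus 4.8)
The plan is to show the equivalence of the three stated formulations by starting from the first integral $I(u,v,\alpha)=\frac{\cos^2\alpha}{u}+\frac{\sin^2\alpha}{v}$ obtained in the previous proposition, and then translating everything into the normal-curvature language using the explicit formulas for $E,G,e,g$ on the quadric from equations \eqref{eq:Ie} and \eqref{eq:IIe}. First I would fix a Darboux curve and let $\lambda$ be the value such that $I(u,v,dv/du)=1/\lambda$ along it; this is legitimate since $I$ is a first integral, so $\lambda$ is a genuine constant of the motion. Writing $\cos^2\alpha = E(u')^2$ and $\sin^2\alpha = G(v')^2$ (using $E(u')^2+G(v')^2=1$), the relation $I=1/\lambda$ becomes $\frac{E\,(u')^2}{u}+\frac{G\,(v')^2}{v}=\frac1\lambda$. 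Substituting $E=\frac{(v-u)u}{4H(u)}$ and $G=\frac{(u-v)v}{4H(v)}$ from \eqref{eq:Ie} gives $\frac{(v-u)}{4H(u)}(u')^2+\frac{(u-v)}{4H(v)}(v')^2=\frac1\lambda$, i.e. $(v-u)\big[\frac{(u')^2}{H(u)}-\frac{(v')^2}{H(v)}\big]=\frac4\lambda$. Combining this with the normalization $E(u')^2+G(v')^2=1$ written out as $\frac{(v-u)u}{4H(u)}(u')^2+\frac{(u-v)v}{4H(v)}(v')^2=1$, I can solve the two-by-two linear system for $\frac{(u')^2}{H(u)}$ and $\frac{(v')^2}{H(v)}$; eliminating the inhomogeneous terms should produce exactly the homogeneous implicit equation $(v-\lambda)H(u)(v')^2-(u-\lambda)H(v)(u')^2=0$ after clearing denominators.

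Next I would establish the normal-curvature statement. By Definition \ref{fol_alpha} (and the formula $k_n(\alpha)=k_1\cos^2\alpha+k_2\sin^2\alpha$ recalled several times in the text), along the Darboux curve $k_n = k_1\cos^2\alpha + k_2\sin^2\alpha$. Plugging in $k_1=\frac1u\sqrt{\frac{abc}{uv}}$ and $k_2=\frac1v\sqrt{\frac{abc}{uv}}$ yields $k_n = \sqrt{\frac{abc}{uv}}\big(\frac{\cos^2\alpha}{u}+\frac{\sin^2\alpha}{v}\big) = \sqrt{\frac{abc}{uv}}\cdot I = \frac1\lambda\sqrt{\frac{abc}{uv}}$, which is precisely the claimed value. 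This step is essentially a one-line substitution once the previous proposition is invoked — the only content is recognizing that the bracketed expression is exactly $I$.

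Finally, the last displayed equivalence is just a rewriting: $k_n(u,v,[du:dv]) = \frac{e\,du^2+g\,dv^2}{E\,du^2+G\,dv^2}$ is the standard expression for normal curvature in an orthogonal chart, and setting it equal to $\frac1\lambda\sqrt{\frac{abc}{uv}}$ and cross-multiplying, using $e=\frac{(v-u)}{4H(u)}\sqrt{\frac{abc}{uv}}$, $g=\frac{(u-v)}{4H(v)}\sqrt{\frac{abc}{uv}}$ from \eqref{eq:IIe} together with \eqref{eq:Ie}, should reproduce the implicit equation $(v-\lambda)H(u)(v')^2-(u-\lambda)H(v)(u')^2=0$ after the common factor $\sqrt{\frac{abc}{uv}}$ cancels. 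The very last equality $\sqrt{\frac{abc}{uv}}=(abc)^{1/4}\mathcal K^{1/4}$ is a bookkeeping remark: the Gaussian curvature is $\mathcal K = k_1k_2 = \frac{abc}{u^2v^2}\cdot\frac{1}{\,}$ — more precisely $\mathcal K = \frac{1}{uv}\sqrt{\frac{abc}{uv}}\cdot\sqrt{\frac{abc}{uv}}\cdot\frac{uv}{abc}\cdots$ — so I would simply record $\mathcal K = \frac{abc}{u^3v^3}$ type identity and check the exponents match.

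\textbf{The main obstacle} is purely algebraic: carefully carrying out the elimination in the two-by-two linear system and the cross-multiplication so that all the $H(u),H(v),(u-v)$ factors group correctly into the advertised form $(v-\lambda)H(u)(v')^2-(u-\lambda)H(v)(u')^2=0$, and making sure the sign conventions (e.g. $(v-u)$ versus $(u-v)$, and the orientation of the normal in \eqref{eq:IIe}) are handled consistently. There is no conceptual difficulty — everything follows from the first integral $I$ and the explicit fundamental forms — but the computation must be done with some care, and I would double-check the special values at the umbilics $u=v=b$ and at the coordinate ranges for each type of quadric to confirm the equation degenerates as expected. I expect the "real integral curves" caveat in the statement to correspond exactly to the sign of $(v-\lambda)(u-\lambda)$ along the curve, which one reads off directly from the implicit equation once it is in the stated form.
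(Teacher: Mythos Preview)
Your proposal is correct and follows essentially the same route as the paper: derive the implicit ODE from the level set $I=1/\lambda$ of the first integral and then verify the normal-curvature identity via the explicit $k_1,k_2$. The only cosmetic difference is that the paper bypasses your two-by-two system and solves directly for $(dv/du)^2$ from $I=1/\lambda$ (the arc-length normalization is unnecessary since the equation is homogeneous in $(du,dv)$), while your observation $k_n=\sqrt{abc/(uv)}\cdot I$ is actually a touch cleaner than the paper's substitution into $k_n=(e+g(dv/du)^2)/(E+G(dv/du)^2)$.
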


\begin{proof}
From the  equation $I=1/\lambda $ it follows that
$$(\frac{dv}{du})^2= \frac{(u-\lambda)}{(v-\lambda)}\frac{vE(u,v)}{u
G(u,v)}=\frac{(u-\lambda)H(v)}{(v-\lambda)H(u)}.$$

Here $dv/du$ is a Darboux direction $\mathcal D$.

 As
$k_n=\frac{e+g (dv/du)^2}{E+G(dv/du)^2}$ it follows from the
equation above and from equations \eqref{eq:Ie} and \eqref{eq:IIe}
that $k_n(p,{\mathcal D})=\frac 1{\lambda}\sqrt{\frac{abc}{uv}}= \frac
1{\lambda}   {(abc)}^{1/4}\, {\mathcal K}^{1/4}, \; {\mathcal K}=k_1k_2.$

For the reciprocal part consider the implicit differential
equation
$$  \frac{e(u,v)du^2+g(u,v)dv^2}{E(u,v)du^2+G(u,v)dv^2} =\frac
1{\lambda}\sqrt{\frac{abc}{uv}}. $$

Using equations \eqref{eq:Ie} and \eqref{eq:IIe} it follows that
this equation is equivalent to the following.

$$\frac{\lambda -u}{H(u)} du^2 - \frac{\lambda -v}{H(v)}dv^2=0
\,\;\; \Leftrightarrow \;\; (u-\lambda) H(v)
du^2-(v-\lambda)H(u)dv^2=0.$$
 The restriction in the
values of $\lambda$ is in order to consider only real solutions of
the implicit differential  equation obtained.
\end{proof}

\begin{prop} \label{prop:re1f2f} The ridge set of the quadric ${\mathbb Q}_{a,b,c}$ is
the intersection of the quadric with the   coordinates planes. Moreover:

\noindent   a)\;\; For the ellipsoid with $0<c<b<a$ it follows that:
\begin{enumerate}
\item[i)] The ellipse $E_{xy}=\{z=0\}\cap {\mathbb E}_{a,b,c}$,
 respectively $E_{yz}=\{x=0\}\cap {\mathbb E}_{a,b,c}$,  is a ridge
corresponding to $k_2$, respectively to $k_1$, and is zigzag.
% \item[ii)] The ellipse $E_{yz} $ is a ridge corresponding
%to $k_1$ and is zigzag.

\item[ii)] The ellipse $E_{xz}=\{y=0\}\cap {\mathbb E}_{a,b,c} $ containing
the four umbilic points $(±x_0, 0,±z_0)$ is the union of
ridges of $k_1$ and $k_2$. For $|x|>x_0$ the ridge correspond to
$k_1$. The ellipse $E_{xz}$    is beak to beak in both cases.
\end{enumerate}

\noindent   b)\;\;  For the hyperboloid of one sheet with $c<0<b<a$ it follows that:
\begin{enumerate}
\item[i)] The hyperbole $H_{yz} $ is a ridge corresponding
to $k_1$ and is beak to beak.

 \item[ii)] The hyperbole  $H_{xz} $ is a ridge corresponding to $k_1$ and is
zigzag.

\item[iii)] The ellipse $E_{xy} $ is a ridge
corresponding $k_2$    and it is  zigzag.

\end{enumerate}

\noindent   c)\;\; For the hyperboloid of two sheets with $c<b<0<a$ it follows that:

\begin{enumerate}

 \item[i)] The hyperbole $H_{xz} $ is a ridge corresponding
to $k_1$ and is zigzag.

\item[ii)] The hyperbole  $H_{xy} $
containing the four umbilic points $(±x_0,±y_0, 0 )$ is the
union of ridges of $k_1$ and $k_2$. For $|x|>x_0$ the ridge
correspond to $k_1$ and all segments of hyperbolas are beak to beak.

\end{enumerate}

\end{prop}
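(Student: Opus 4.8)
The plan is to reduce the whole statement to sign computations with the explicit Darboux vector field \eqref{eq:deq} on $\mathbb Q_{a,b,c}$, its first integral \eqref{eq:ie}, and the first fundamental form \eqref{eq:Ie}.

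\emph{Where the ridge set is, and which foliation it serves.} By definition a point of $\mathbb Q_{a,b,c}$ (off the umbilics) is a ridge of $\mathcal P_1$ exactly where $\frac{\partial k_1}{\partial s_1}=\frac{1}{\sqrt E}\frac{\partial k_1}{\partial u}=\frac{3(k_1-k_2)\,r_1}{\sqrt E}$ vanishes. On a principal chart $k_1\ne k_2$, and $r_1=\tfrac12 v/(u(u-v))$ is finite and nonzero because the prescribed ranges of $u$ and $v$ avoid both $0$ and the diagonal $u=v$; hence $\partial k_1/\partial u$ is bounded and nonvanishing, so $\partial k_1/\partial s_1$ extends continuously by $0$ precisely on the locus where $1/\sqrt E=0$, which by \eqref{eq:Ie}, $E=\frac{(v-u)u}{4H(u)}$, is $\{H(u)=0\}$, i.e. $u\in\{a,b,c\}$. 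By the parametrization \eqref{eq:porto} these are exactly the slices of $\mathbb Q_{a,b,c}$ by $\{x=0\},\{y=0\},\{z=0\}$; symmetrically the $\mathcal P_2$-ridge is $\{H(v)=0\}$. So the ridge set is $\mathbb Q_{a,b,c}\cap(\{x=0\}\cup\{y=0\}\cup\{z=0\})$. A coordinate slice reached as a $u$-endpoint of the chart is crossed transversally by the $\mathcal P_1$-leaves (so $u$, hence the monotone function $k_1$ of $u$, has a local extremum along such a leaf as it meets the slice): it is a $\mathcal P_1$-ridge there. The single coordinate plane carrying the four umbilics is simultaneously a $u$-endpoint and a $v$-endpoint, so it is a $\mathcal P_1$-ridge on one arc and a $\mathcal P_2$-ridge on the complementary arc, the arcs separated by the umbilics. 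Matching each root of $H$ to the plane it cuts out and to the endpoint(s) of the $u$- and $v$-ranges listed for the ellipsoid, the one-sheeted and the two-sheeted hyperboloid yields the $k_1$/$k_2$ labels in items (i)--(iii) of (a), (b), (c).

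\emph{Zigzag versus beak-to-beak.} Near a $\mathcal P_1$-ridge $u=u_*$ (a root of $H$) restrict a Darboux curve with $\alpha$ small to a level set $\{I=c\}$ of \eqref{eq:ie}; eliminating $c$ gives $\alpha^2=\frac{v(cu-1)}{u-v}$, a function of $u$ whose derivative at $u=u_*$ is $\frac{v}{u_*(u_*-v)}=2\,r_1(u_*)$. Thus, near the ridge, the Darboux curves make up a one-parameter family of parabolic arcs in the $(u,\alpha)$-plane, all with slope $2r_1(u_*)$ at $u=u_*$. If this slope orients the arcs so that they reach $\alpha=0$ from the side of $u_*$ that the surface occupies, one arc meets the ridge tangent to the principal line $e_1$ and the neighbours cross it: this is the normally hyperbolic, beak-to-beak alternative of Theorem \ref{th:zigbec}. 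In the opposite case each arc turns back at a cusp $\{\alpha=0\}$ lying \emph{off} the ridge, and one gets the zigzag. Since $r_1(u_*)=\tfrac12 v/(u_*(u_*-v))$ and, for a $\mathcal P_2$-ridge $v=v_*$, $r_2(v_*)=\tfrac12 u/(v_*(v_*-u))$ are explicit, it remains only to compare the sign of $r_1(u_*)$ (resp. $r_2(v_*)$) with the sign of $u-u_*$ (resp. $v-v_*$) on the relevant chart, for each root of $H$ in each of the three quadrics; these elementary sign comparisons produce exactly the zigzag/beak-to-beak entries of (a), (b), (c).

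\emph{The main obstacle.} The delicate point is justifying the previous paragraph rigorously, because the ridge lies on the \emph{boundary} of the principal chart, where the parametrization \eqref{eq:porto} and the coefficient $E$ degenerate, so Theorem \ref{th:zigbec} --- stated for a ridge in the interior of a regular chart, with $\partial k_1/\partial u=0$ there --- cannot be invoked verbatim. One must first desingularize: using the reflection of $\RR^3$ in the coordinate plane (an isometry of $\mathbb Q_{a,b,c}$ fixing the slice) introduce a genuine $C^\infty$ chart across the plane, for instance $u=u_*+w^2$, re-express \eqref{eq:deq}, and check that in the new coordinates the ridge is a normally hyperbolic (resp. elliptic) curve exactly when the slope computation above predicts --- equivalently that the sign of the eigenvalue product $-\tfrac13\sigma_1$ of Theorem \ref{th:zigbec}, or the applicability of Roussarie's normal form in the purely imaginary case, is governed by the signs of $r_1(u_*)$ and $u-u_*$. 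Tracking signs faithfully through this desingularization --- above all, which side of each root $a,b,c$ the surface actually lies on, which differs between the ellipsoid and the two hyperboloids --- is the real work behind the table.
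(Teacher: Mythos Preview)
Your route is genuinely different from the paper's, and worth comparing.

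The paper never works near the boundary of the confocal chart. For the \emph{location} of the ridge it uses the reflectional symmetry of $\mathbb Q_{a,b,c}$ in each coordinate plane (so each $k_i$ is critical along its $\mathcal P_i$-line there), together with the interior check $\partial k_1/\partial u=-\tfrac{3}{2u}k_1\ne0$, $\partial k_2/\partial v=-\tfrac{3}{2v}k_2\ne0$. For the \emph{type} it picks one concrete point on each connected ridge arc, writes the quadric there as a Monge graph $h(y,z)$ (or $h(x,z)$, etc.), reads off the quartic coefficients, and plugs into Proposition~\ref{prop:rzigzag} to get the sign of $\sigma_i$ directly. No singular chart, no desingularization, no first integral; Theorem~\ref{th:zigbec} applies verbatim.

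Your argument instead stays entirely in the confocal chart and uses the first integral $I$ together with the explicit $r_i$. The criterion you isolate is correct: if one actually performs the substitution $u=u_*\pm w^2$ you mention, one finds that $(w,v)$ is a regular principal chart across the slice, that $\partial k_1/\partial w|_{w=0}=0$, and that
\[
\sigma_1=\frac{\partial^2 k_1/\partial w^2}{k_1-k_2}\bigg|_{w=0}=6\,r_1(u_*)\cdot\operatorname{sgn}(u-u_*),
\]
so ``$r_1(u_*)$ and $u-u_*$ have the same sign'' is literally $\sigma_1>0$. This confirms your heuristic and shows that your ``main obstacle'' is a two-line computation rather than a real difficulty. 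What is missing from your write-up is precisely that computation, together with the nine or ten explicit sign checks you defer with ``these elementary sign comparisons produce exactly\ldots''. As it stands the proposal is an outline, not a proof.

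What each approach buys: the paper's Monge-chart method is self-contained and applies Theorem~\ref{th:zigbec} without any adaptation, at the cost of repeating essentially the same Taylor computation at several sample points. Your method is more uniform --- one formula $\sigma_1=6\,r_1(u_*)\operatorname{sgn}(u-u_*)$ handles every case --- and exploits the global first integral, but it requires the desingularization step and then the same amount of sign bookkeeping at the end.
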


\begin{proof} a) Ellipsoid:
By symmetry is clear that the points of intersection
 of the coordinates planes with the ellipsoid are ridges points.
The principal curvatures have no critical points along the
corresponding principal curvature line in the complement of these
three ellipses. In a principal chart $(u,v)$ we have that $
\frac{dk_1}{du} =-\frac{3}{2u} k_1\ne 0$ and $ \frac{dk_2}{dv} =-\frac{3}{2v} k_2\ne 0$.

 It will be sufficient to check the condition of zigzag or beak to beak
 only in a point of a connected component of the ridge set.

 Consider the point $p_0=(-\sqrt{a},0,0)$. The ellipsoid
is parametrized by:
 $$x(y,z)=-\sqrt{a}+\sqrt{a} [\frac{
y^2}{2b}+\frac{z^2}{2c}+\frac{y^4}{8b^2}+\frac{y^2z^2}{4bc}+\frac{z^2}{8c^2}+
h.o.t.]$$

Therefore $k_1(p_0)=\sqrt{a}/b,$ \; $k_2(p_0)=\sqrt{a}/c$, $\;A=
3\sqrt{a}/b^2$ (A is the coefficient of $y^4$)  and
($(A-3k_1^3)(k_2-k_1)= -3a(a-b)(b-c)/(b^4c) <0$. Therefore the
ellipse $E_{xz}$ is beak to beak.

Also, let $E= 3\sqrt{a}/c^2 $ (coefficient of $ z^4$). So,
$(E-3k_2^3)(k_1-k_2)=3a(a-c)(b-c)/(bc^4)>0$. Therefore, by Theorem \ref{th:zigbec}  the ellipse
$E_{xy}$   is zigzag.

Now consider the point $q_0=(0,-\sqrt{b},0)$. The ellipsoid is
parametrized by:
 $$y(x,z)=-\sqrt{b}+  \sqrt{b} [\frac{
x^2}{2a}+\frac{z^2}{2c}+\frac{x^4}{8a^2}+\frac{x^2z^2}{4ac}+\frac{z^2}{8c^2}+
h.o.t.]$$

Now, with $A$ coefficient of $x^4$ and $E$ coefficient of $z^4$ it
follows that $(A-3k_1^3)(k_2-k_1)=3b(a-b)(a-c)/(a^4c)>0$ and
$(E-3k_2^3)(k_1-k_2)=3b(b-c)(a-c)/(ac^4)>0$.

So both  ellipses $E_{yz}$
and $E_{xy}$ are both zigzag, one for the corresponding  principal curvature.

\noindent b)\;  Hyperboloid of one sheet:
the ridges are given by
the  intersection of the hyperboloid   with the
coordinates planes.

Consider the point $q_0=(0,-\sqrt{b},0)$. The hyperboloid  is
parametrized by:
 $$y(x,z)=-\sqrt{b}+  \sqrt{b} [\frac{
x^2}{2a}+\frac{z^2}{2c}+\frac{x^4}{8a^2}+\frac{x^2z^2}{4ac}+\frac{z^2}{8c^2}+
h.o.t.]$$
 Therefore $k_1(q_0)=\sqrt{b}/a>0$ \;
$k_2(q_0)=\sqrt{b}/c<0$, $\;A=  3\sqrt{b}/a^2$ (A is the
coefficient of $x^4$)   and $E=3\sqrt{b}/c^2$ coefficient of $z^4$
it follows that $(A-3k_1^3)(k_1-k_2)=-3b(a-b)(a-c)/(a^4c)>0$ and
$(E-3k_2^3)(k_2-k_1)=-3b(b-c)(a-c)/(ac^4)<0$. The hyperbole $H_{yz}$
is beak to beak and the ellipse $E_{xy}$ is   zigzag.

Next consider the point $p_0=( -\sqrt{a},0,0)$. The hyperboloid is
parametrized by:
 $$x(y,z)= -\sqrt{a}+\sqrt{a} [\frac{
y^2}{2b}+\frac{z^2}{2c}+\frac{y^4}{8b^2}+\frac{y^2z^2}{4bc}+\frac{z^2}{8c^2}+
h.o.t.]$$

Therefore $k_1(p_0)=\sqrt{a}/b>0$ \; $k_2(p_0)=\sqrt{a}/c<0$,
$\;A=  3\sqrt{a}/b^2$ (A is the coefficient of $y^4$) $E=
3\sqrt{a}/c^2 $ (coefficient of $ z^4$) it follows that
($(A-3k_1^3)(k_1-k_2)= 3a(a-b)(b-c)/(b^4c) <0$ and
$(E-3k_2^3)(k_2-k_1)=-3a(a-c)(b-c)/(bc^4)<0$.  Therefore the
hyperbole  $H_{xz}$ and  the ellipse $E_{xy}$  are both  zigzag.

\noindent c)\; Hyperboloid of two sheets:
 the ridges are   the intersection
of the hyperboloid with the coordinates planes.

Consider the point $p_0=(  \sqrt{a},0,0)$. One leaf of the
  hyperboloid is parametrized by:
 $$x(y,z)=  \sqrt{a}-\sqrt{a} [\frac{
y^2}{2b}+\frac{z^2}{2c}+\frac{y^4}{8b^2}+\frac{y^2z^2}{4bc}+\frac{z^2}{8c^2}+
h.o.t.]$$

Therefore $k_1(p_0)=-\sqrt{a}/b>0$ \; $k_2(p_0)=-\sqrt{a}/c>0$,
$\;A= -3\sqrt{a}/b^2$ (A is the coefficient of $y^4$)  and
($(A-3k_1^3)(k_1-k_2)=   3a(a-b)(a-c)/(b^4c) <0$. Therefore the
hyperbole  $H_{xz}$ is  zigzag.

Also, let $E=  -3\sqrt{a}/c^2 $ (coefficient of $ z^4$). So,
$(E-3k_2^3)(k_2-k_1)=-3a(a-c)(b-c)/(bc^4)>0$. Therefore the
hyperbole $H_{xy}$ for $|x|<x_0$  is beak to beak.

For $|x|>x_0$ on the hyperbole $H_{xy}$ a similar analysis shows that
it is beak to beak.
\end{proof}

\begin{prop} \label{prop:de} Consider the ellipsoid ${\mathbb E}_{a,b,c}$
with $a>b>c>0$.
\begin{enumerate}
\item[i)] For $c< \lambda <b$ the Darboux curves  are and contained
in cylindrical  region $ c< v<\lambda  $ and the behavior is as in
Fig. \ref{fig:elipsoide}, upper left.

\item[ii)] For $\lambda=b$ the Darboux curves  are the circular
sections of the ellipsoid. These circles are contained in planes
parallels to the tangent plane to  $\mathbb E_{a,b,c} $ at the
umbilic points. These circles are tangent along the ellipse $E_y$
and through each  umbilic point pass only one Darboux curve. See
Fig. \ref{fig:elipsoide}, bottom right.

\item[iii)] For $b <\lambda < a $ the Darboux curves  are bounded
  contained in the two cylindrical region $ \lambda \leq u \leq
a$ and the behavior is as shown in the  Fig. \ref{fig:elipsoide}, upper right.

 %ZIG -ZAG em torno da elipse x=0
\end{enumerate}

\end{prop}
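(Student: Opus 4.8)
The plan is to run the whole argument off the first integral $I(u,v,\alpha)=\frac{\cos^2\alpha}{u}+\frac{\sin^2\alpha}{v}$ of the Darboux field \eqref{eq:deq} (already established above), together with the classification of the ridge ellipses of $\mathbb E_{a,b,c}$ given in Proposition \ref{prop:re1f2f} and Theorem \ref{th:zigbec}. Along a Darboux curve we have $I\equiv 1/\lambda$ for a constant $\lambda$, and $k_n=\frac1\lambda\sqrt{abc/(uv)}$ in the Darboux direction. In the principal chart \eqref{eq:porto} one has $u\in(b,a)$ and $v\in(c,b)$, hence $1/u<1/b<1/v$; since $I$ is the convex combination $\cos^2\alpha\cdot\frac1u+\sin^2\alpha\cdot\frac1v$, it lies between $1/u$ and $1/v$, so
\[
v\le\lambda\le u\qquad\text{everywhere along the Darboux curve,}
\]
and in particular $\lambda\in(c,a)$, the degenerate endpoints $\lambda=c,a$ being the ridge ellipses $E_{xy},E_{yz}$ themselves. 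This single inequality is what separates the three cases: for $\lambda<b$ the binding constraint is $v\le\lambda$, for $\lambda>b$ it is $u\ge\lambda$, and $\lambda=b$ is transitional.

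For item (i), $c<\lambda<b$, the curve is confined to the annular region $\{c<v\le\lambda\}$ (a tubular neighbourhood of $E_{xy}=\{z=0\}$) while $u$ runs freely in $(b,a)$. Solving $I=1/\lambda$ for $\cos^2\alpha$ at the two boundary values: at $v=\lambda$ one is forced to have $\cos\alpha=0$ (as $1/u\neq1/\lambda$), so the curve is tangent there to the principal line $\{v=\lambda\}$; at $v=c$ one finds $\sin^2\alpha=\frac{1/\lambda-1/u}{1/c-1/u}\in(0,1)$, a transversal crossing in the $(u,v)$-chart, which via the branching $z\sim\pm\sqrt{v-c}$ of \eqref{eq:porto} becomes a cusp of the curve on the ellipsoid exactly along $E_{xy}$. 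Since $E_{xy}$ is a zigzag ridge (Proposition \ref{prop:re1f2f}, via Theorem \ref{th:zigbec}), this reproduces Fig. \ref{fig:elipsoide}, upper left. Item (iii), $b<\lambda<a$, is the same argument with $u$ and $v$ interchanged: the curve lives in $\{\lambda\le u<a\}$ around $E_{yz}=\{x=0\}$, touches $\{u=\lambda\}$ tangentially, never reaches $\{u=b\}$ (there one would need $\cos^2\alpha>1$), and meets the zigzag ridge $E_{yz}$ at $\{u=a\}$ with cusps, giving Fig. \ref{fig:elipsoide}, upper right.

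Item (ii), $\lambda=b$, is the substantive point and where I expect the real work. Since $H(b)=0$, the implicit equation $(v-\lambda)H(u){v'}^2-(u-\lambda)H(v){u'}^2=0$ factors through $(u-b)(v-b)$ and reduces, off $E_{xz}$, to $(a-u)(u-c){v'}^2=(a-v)(v-c){u'}^2$, i.e. to the two separated equations $\sqrt{(a-v)(v-c)}\,dv=\pm\sqrt{(a-u)(u-c)}\,du$, one for each of $\fol^{+},\fol^{-}$. To recognize their leaves I will use the identity, valid on $\mathbb E_{a,b,c}$,
\[
\Bigl(\tfrac{x^2}{a}+\tfrac{y^2}{b}+\tfrac{z^2}{c}\Bigr)-\tfrac1b\,(x^2+y^2+z^2)= -\,x^2\Bigl(\tfrac1b-\tfrac1a\Bigr)+z^2\Bigl(\tfrac1c-\tfrac1b\Bigr),
\]
whose left-hand side equals $1-\tfrac1b r^2$; factoring the right-hand side shows that on each plane $x\sqrt{1/b-1/a}\mp z\sqrt{1/c-1/b}=\mathrm{const}$ the function $r^2=x^2+y^2+z^2$ is affine in position, so the section of $\mathbb E_{a,b,c}$ by such a plane lies on a sphere and hence is a circle. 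It then remains to verify, by substituting \eqref{eq:porto}, that $x\sqrt{1/b-1/a}\mp z\sqrt{1/c-1/b}$ is constant along the solutions of the equation with the matching sign — so the leaves of $\fol^{\pm}$ are precisely these two systems of circular sections — that the umbilic normal $(\tfrac{x_0}{a},0,\tfrac{z_0}{c})$ is proportional to $(\sqrt{1/b-1/a},0,\pm\sqrt{1/c-1/b})$ (a short computation with $x_0,z_0$), so these planes are parallel to the tangent planes at the umbilics, and finally that of the two circular sections through a given umbilic one is degenerate — its plane is the umbilic tangent plane, a support plane of the convex ellipsoid meeting it only at that point — leaving exactly one Darboux curve through each umbilic, tangent to $E_{xz}$ there in accordance with the beak-to-beak behaviour of that ridge (Proposition \ref{prop:re1f2f}). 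This identification of the factored equation with the classical circular sections, and the analysis at the umbilics, is the only genuine obstacle; the factorization and the confinement estimates of (i) and (iii) are routine once the inequality $v\le\lambda\le u$ is in hand.
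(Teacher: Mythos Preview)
Your overall strategy --- use the first integral $I=\cos^2\alpha/u+\sin^2\alpha/v$ to get the confinement $v\le\lambda\le u$, then read off the boundary behaviour --- is exactly the paper's, and your treatment of~(ii) is in fact more explicit than the paper's (which simply notes that circles are Darboux curves and that the ellipsoid has two families of circular sections). But the boundary analysis in~(i) and~(iii) is reversed.

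At $v=\lambda$ you correctly deduce $\cos\alpha=0$, i.e.\ $\alpha=\pi/2$. By the convention of Proposition~\ref{prop:dpc}, $(u',v')=(\cos\alpha/\sqrt E,\sin\alpha/\sqrt G)$, so $\alpha=\pi/2$ means the curve is tangent to $\mathcal P_2$ (the $v$--direction, leaves $\{u=\mathrm{const}\}$), \emph{not} to the boundary $\{v=\lambda\}$. A curve confined to $v\le\lambda$ and hitting $v=\lambda$ perpendicularly has a cusp there; this is exactly the cuspidal behaviour of Proposition~\ref{prop:dlocal} at a non--ridge point. Concretely, in the linearising variables $d\sigma_1=\sqrt{(u-\lambda)/|H(u)|}\,du$, $d\sigma_2=\sqrt{(\lambda-v)/H(v)}\,dv$ the Darboux equation becomes $d\sigma_1^2=d\sigma_2^2$, and since $\lambda-v\sim C\,|\sigma_2^{\max}-\sigma_2|^{2/3}$ one gets $\lambda-v\sim C'|u-u_*|^{2/3}$, a genuine cusp in $(u,v)$.

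At $v=c$ the situation is the opposite of what you wrote. There $H(v)\to 0$, so $(dv/du)^2\to 0$: the curve is \emph{tangent} to $\{v=c\}$ in the $(u,v)$--chart, not transversal; your formula $\sin^2\alpha=(1/\lambda-1/u)/(1/c-1/u)\in(0,1)$ only says the \emph{surface} angle is nondegenerate, which is consistent because $G\to\infty$ there. With $z^2\sim\mathrm{const}\cdot(v-c)$ and $v-c\sim\mathrm{const}\cdot(u-u_{**})^2$ one gets $z\sim\mathrm{const}\cdot(u-u_{**})$: a smooth transversal crossing of $E_{xy}$, not a cusp. In particular the zigzag classification of $E_{xy}$ from Proposition~\ref{prop:re1f2f} is irrelevant for these curves, since they are not tangent to $\mathcal P_2$ at $E_{xy}$. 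The same corrections apply verbatim to~(iii) with $u$ and $v$ interchanged: cusps along $\{u=\lambda\}$, smooth crossings of $E_{yz}$ at $u=a$.
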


\begin{figure}[htbp]
\begin{center}
 \includegraphics[scale=0.5]{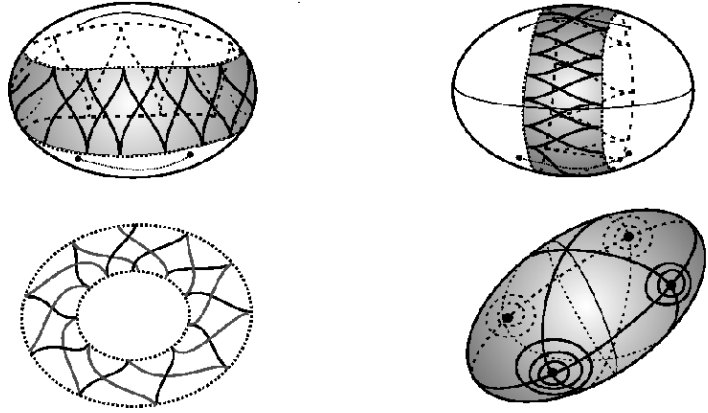}
   \caption { \label{fig:elipsoide}   Darboux curves  on the
ellipsoid.}
   \end{center}
 \end{figure}

\begin{proof} First case: $c<\lambda< b$.

The differential equation of Darboux curves  is given by:
$$  \frac{(v-\lambda)} {H(v)}{v^\prime}^2- \frac{(u-\lambda)}{ H(u)}{u^\prime}^2=0,
 \;\; c\leq v \leq \lambda   \;\; \text{ and}  \;\;b \leq u\leq a.$$

Define $d\sigma_1= \sqrt{(u-\lambda)/H(u)}du$ and $d\sigma_2=
\sqrt{(v-\lambda)/H(v)}dv$.

Therefore the differential equation is equivalent to
$d\sigma_1^2-d\sigma_2^2=0 $ , with $(\sigma_1, \sigma_2)\in
 [0,L_1 ]×[0,L_2 ]$ ( $L_1=\int_{b }^{a }
d\sigma_1 <\infty $, $\; L_2=\int_{c }^{\lambda } d\sigma_2
<\infty$ ).

In the ellipsoid this analysis implies the following.

The cylindrical region  $C_\lambda= \alpha([b,a]×
[c,\lambda])$ is foliated by the integral curves of an implicit
differential equation having cusp singularities in $\partial
C_\lambda$. We observe that this region is free of  umbilic point
and is bounded by principal curvature lines, in coordinates
defined by $v=c$ and $v=\lambda$.

The case $ b<\lambda <a$ the analysis is similar. Now the
differential equation of Darboux curves  are defined in the region
$[\lambda,a]×[c,b]$ and we have a cylindrical region
$C_\lambda= \alpha([ \lambda, a ]×[c,b])$. This

For $\lambda=b$ the differential equation  can be  simplified in
the following.

$(u-a)(u-c)dv^2-(v-a)(v-c)du^2=0. \;\; $

This equation is well defined in the rectangle $[c,a]×
[c,a]$.

Define $d\sigma_1=1/\sqrt { (u-a)(u-c) }du$ and
$d\sigma_2=1/\sqrt{(v-a)(v-c)}dv$. So the equation is equivalent
$d\sigma_1^2-d\sigma_2^2=0 $ with  $(\sigma_1, \sigma_2) \in [0,L
]×[0,L ]$ ($L=\int_{c}^{a } d\sigma_1$). So in this
rectangle all solutions are straight lines. The images of this
family of curves on the ellipsoid are  its  circular sections. In
fact we know that the ellipsoid has circular sections parallel to
the tangent planes at umbilic points. As the circles are always
Darboux lines it follows that the solutions of the differential
equation is the family of circular sections.  So we have two
families of circles having tangency along the ellipse $E_y$.
\end{proof}

\begin{prop} \label{prop:der} Consider an  ellipsoid \;$\mathbb E_{a,b,c}
$ with three axes $a>b>c>0$ and suppose $b <\lambda < a$.  Let
$L_1:= \int_{b}^{a} \sqrt{E(u,b)}du$ and  $L_2:=
\int_{c}^{\lambda} \sqrt{G( b,\lambda)}du$ and  define
$\rho=\frac{L_2}{L_1}$. Consider the Poincaré map $\pi: \Sigma
\to \Sigma$ associated to the foliation of Darboux curves  defined
by the implicit differential equation $I=1/\lambda$.

Then if $\rho\in\mathbb R\setminus \mathbb Q$ {\rm (} resp. $\rho\in \mathbb
Q${\rm )}
 all orbits are recurrent  {\rm (} resp. periodic{\rm )} on the cylinder region   $  v\leq \lambda\}$.
See Figure \ref{fig:elipsoide}, bottom left.
\end{prop}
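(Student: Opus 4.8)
The plan is to reduce the Darboux-curve dynamics in the region $\{v \le \lambda\}$ to a linear flow (or a rotation) on a torus, exactly as was done for $\fol_\alpha$ in Proposition \ref{prop:55ma}. First I would recall, from the previous proposition, that for $b<\lambda<a$ the Darboux curves satisfy the implicit differential equation
\[
\frac{(v-\lambda)}{H(v)}\,dv^2-\frac{(u-\lambda)}{H(u)}\,du^2=0,
\]
so that after the substitution $d\sigma_1=\sqrt{(u-\lambda)/H(u)}\,du$, $d\sigma_2=\sqrt{(\lambda-v)/(-H(v))}\,dv$ (note the signs: for $b<\lambda<a$, $c<v<b$ one has $u-\lambda$ and $H(u)$ of the appropriate signs to make $\sigma_1,\sigma_2$ real, and similarly on the other side) the equation becomes $d\sigma_1^2-d\sigma_2^2=0$ on the rectangle $[0,L_1]\times[0,L_2]$, with $L_1=\int_b^a\sqrt{E(u,b)}\,du$, $L_2=\int_c^{\lambda}\sqrt{G(b,\lambda)}\,dv$ finite. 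Thus in the $(\sigma_1,\sigma_2)$ coordinates the Darboux curves are the two families of straight lines of slope $\pm 1$.

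Next I would describe the global identifications that turn this rectangle into the cylindrical region $C_\lambda\subset\mathbb E_{a,b,c}$. The boundary $u=a$ and $u=b$ (i.e. $\sigma_1=0$ and $\sigma_1=L_1$) are principal-curvature lines where the Darboux equation degenerates and the integral curves have cusps: as in Proposition \ref{prop:55ma} and Proposition \ref{prop:der}'s predecessor, crossing such an edge the curve reflects, $d\sigma_2/d\sigma_1\mapsto -d\sigma_2/d\sigma_1$. Likewise the edge $v=c$ is a principal line on which reflection occurs, while $v=\lambda$ corresponds to the tangency locus of the foliation inside the ellipsoid; I would argue that the cylinder $C_\lambda$ is obtained by gluing two copies of the rectangle along these reflecting edges. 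The composition of two reflections being a rotation, the first-return map $\pi:\Sigma\to\Sigma$ of the Darboux foliation to the transversal ellipse $\Sigma$ is a rotation of the circle $\Sigma$ whose rotation number is exactly $\rho=L_2/L_1$. Then the classical dichotomy for circle rotations finishes the argument: if $\rho\notin\mathbb Q$ every orbit of $\pi$ is dense, hence every Darboux leaf is recurrent; if $\rho\in\mathbb Q$ every orbit of $\pi$ is periodic, hence (away from the singular umbilic separatrices, which play no role here since $C_\lambda$ is umbilic-free by the description in Proposition \ref{prop:de}) every Darboux leaf is closed.

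The steps in order: (1) write the Darboux equation in the $(\sigma_1,\sigma_2)$ rectangle coordinates and check finiteness of $L_1,L_2$; (2) identify the four edges of the rectangle with principal lines / tangency locus, and establish the reflection rule at each edge; (3) assemble the unfolding of $C_\lambda$ as a union of rectangles with these gluings and conclude that the Poincar\'e return map $\pi$ on the transversal ellipse $\Sigma$ is a Euclidean rotation of angle $2\pi\rho$; (4) invoke the rotation-number dichotomy. The main obstacle I expect is step (2)–(3): correctly bookkeeping the orientations and the gluing combinatorics so that the composition of the two edge-reflections is genuinely a rotation by $2\pi L_2/L_1$ (and not, say, by a multiple of it or an orientation-reversing map), and checking that the developing/unfolding of $C_\lambda$ into the $(\sigma_1,\sigma_2)$-strip is consistent — this is the analytic-continuation argument used for the isometries $\sigma_\pm,\tau_\pm$ in the proof of Proposition \ref{prop:55ma}, and the same care is needed here. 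The normalization of $L_2$ by the constants $E(u,b)$, $G(b,\lambda)$ in the statement suggests that the intended computation evaluates these coefficients along the boundary principal lines, which is exactly what the change of variables above produces.
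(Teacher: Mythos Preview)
Your approach is essentially the same as the paper's: reduce the Darboux equation via the change of variables $d\sigma_1=\sqrt{(u-\lambda)/H(u)}\,du$, $d\sigma_2=\sqrt{(v-\lambda)/H(v)}\,dv$ to $d\sigma_1^2-d\sigma_2^2=0$ on a rectangle $[0,L_1]\times[0,L_2]$, and then invoke the rotation-number analysis of Proposition~\ref{prop:55ma}. The paper's proof is in fact terser than yours---it simply writes down the substitution and refers back to the earlier proposition---so your steps (2)--(4) are a fuller version of what the paper leaves implicit. One small slip: for $b<\lambda<a$ the relevant $u$-range is $[\lambda,a]$, not $[b,a]$, so the tangency edge is $u=\lambda$ (not $v=\lambda$) and the reflecting principal edges are $u=a$, $v=b$, $v=c$; adjust your description of the boundary identifications accordingly.
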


\begin{proof}
The differential equation of Darboux curves  is given by:
$$  \frac{(v-\lambda)} {H(v)}{v^\prime}^2- \frac{(u-\lambda)}{ H(u)}{u^\prime}^2=0,
 \;\; c \leq v < \lambda <u\leq a.$$
Define $d\sigma_1=\sqrt { \frac{(u-\lambda)}{ H(u)} }du$ and
$d\sigma_2=\sqrt{\frac{(v-\lambda)} {H(v)}}dv$. By integration,
this leads to the chart $(\sigma_1, \sigma_2)$, in a rectangle
$[0,L_1]×[0,L_2]$ in which the differential equation of
Darboux is given by
$$d\sigma_1^2-d\sigma_2^2=0.$$
The proof ends with the analysis of the rotation number of the above equation.
 See similar analysis in  Propositions \ref{prop:55ma} and \ref{prop:der}.
\end{proof}

\begin{prop} \label{prop:dh2f} Consider a connect component of a  hyperboloid of two sheets $\mathbb H_{a,b,c}
$ with   $a>0> b>c.$
\begin{enumerate}
\item[i)] For $\lambda <c$ the Darboux curves  are non bounded and
contained in the non bounded region $ v<\lambda  $ and the
behavior is as in the Fig. \ref{fig:hp2f},   left.

\item[ii)] For $\lambda=c$ the Darboux curves  are the circular
sections of the hyperboloid. See Fig. \ref{fig:hp2f}, center.

\item[iii)] For $c <\lambda < b $ the Darboux curves  are non
bounded and contained in the   cylindrical region $ \lambda \leq
u \leq b$ and the local behavior   is as shown in Fig. \ref{fig:hp2f}, right.
\end{enumerate}
\end{prop}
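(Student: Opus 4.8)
The plan is to argue exactly as in the proof of Proposition~\ref{prop:de}, starting from the implicit differential equation of Darboux curves on a quadric obtained above, which on a principal chart reads
$$\frac{(v-\lambda)}{H(v)}\,{v'}^2-\frac{(u-\lambda)}{H(u)}\,{u'}^2=0,\qquad H(x)=(x-a)(x-b)(x-c),$$
and bringing it to the normal form $d\sigma_2^2-d\sigma_1^2=0$ by the substitution $d\sigma_1=\sqrt{(u-\lambda)/H(u)}\,du$, $d\sigma_2=\sqrt{(v-\lambda)/H(v)}\,dv$. For a connected component of the hyperboloid of two sheets with $a>0>b>c$ the principal coordinates run over $u\in(c,b)$ and $v<c$; on these ranges $H(u)>0$ and $H(v)<0$, so which subregion actually carries real Darboux curves, and where the substitution is legitimate, is dictated entirely by the sign of $u-\lambda$ and $v-\lambda$, i.e. by the position of $\lambda$ relative to $c$ and $b$.

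First I would dispose of cases (i) and (iii) together. For $\lambda<c$ one has $u-\lambda>0$ throughout $(c,b)$, while $(v-\lambda)/H(v)\ge 0$ forces $v\le\lambda$, confining the curves to $v<\lambda$; for $c<\lambda<b$ one has $v-\lambda<0$ automatically, while $(u-\lambda)/H(u)\ge 0$ forces $u\ge\lambda$, confining the curves to the cylindrical region $\lambda\le u<b$. In both cases the $u$-integral $L_1=\int\sqrt{(u-\lambda)/H(u)}\,du$ over the admissible interval is finite, since the integrand has only inverse-square-root singularities at the endpoints $u=b$ and (in case (i)) $u=c$; whereas the $v$-integral $\int\sqrt{(v-\lambda)/H(v)}\,dv$ converges at its finite endpoint but diverges as $v\to-\infty$, where the integrand behaves like $1/|v|$. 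Hence in the $(\sigma_1,\sigma_2)$-chart the admissible region is a half-infinite strip $[0,L_1]\times[0,\infty)$ and the Darboux curves are the segments $\sigma_1\pm\sigma_2=\mathrm{const}$ broken by reflection at $\sigma_1=0$ and $\sigma_1=L_1$; this reproduces the non-bounded behavior depicted in Figure~\ref{fig:hp2f}, left and right.

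For the borderline value $\lambda=c$ the equation degenerates favorably: since $\dfrac{v-c}{H(v)}=\dfrac{1}{(v-a)(v-b)}$ and $\dfrac{u-c}{H(u)}=\dfrac{1}{(u-a)(u-b)}$, it becomes $(u-a)(u-b)\,{v'}^2-(v-a)(v-b)\,{u'}^2=0$, and the substitution $d\sigma_1=du/\sqrt{(u-a)(u-b)}$, $d\sigma_2=dv/\sqrt{(v-a)(v-b)}$ (both radicands are positive on the relevant ranges) turns it into $d\sigma_1^2-d\sigma_2^2=0$, whose integral curves are straight lines in the chart and hence plane sections of the hyperboloid. To recognize these as the circular sections I would invoke, as in Proposition~\ref{prop:de}, the two facts already available: every circle on a surface is a Darboux curve, and the hyperboloid of two sheets carries the classical one-parameter families of circular sections parallel to the tangent planes at the umbilic points $(\pm x_0,\pm y_0,0)$ of Proposition~\ref{prop:re1f2f}; these circles foliate the relevant region and must therefore coincide with the integral curves just found, tangent to one another along the curve through the umbilic points.

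The main obstacle is the asymptotic analysis at the non-compact end $v\to-\infty$: one must confirm that this end of the coordinate range really corresponds to escaping to infinity on the hyperboloid --- so that ``non-bounded'' is warranted --- and that the corresponding $\sigma_2$-integral genuinely diverges there, which is precisely what separates this situation from the recurrent/periodic picture on the compact ellipsoid of Proposition~\ref{prop:der}. Everything else is the sign bookkeeping for $H$ and the convergence check for inverse-square-root singularities, carried out just as in Proposition~\ref{prop:de}.
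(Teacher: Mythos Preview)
Your proposal is correct and follows exactly the route the paper takes: its own proof of this proposition is the single sentence ``The analysis developed in the case of the ellipsoid also works here. See proof of Proposition~\ref{prop:de},'' and you have simply spelled out what that means, with the appropriate sign bookkeeping for the ranges $u\in(c,b)$, $v<c$ and the added (and welcome) observation that the $\sigma_2$-integral diverges at $v\to-\infty$, which is what distinguishes the unbounded behavior here from the recurrent picture on the ellipsoid. One cosmetic point: the clause ``straight lines in the chart and hence plane sections of the hyperboloid'' is not justified as written---linearity in $(\sigma_1,\sigma_2)$ does not by itself force planarity in $\mathbb{R}^3$---but you immediately give the correct argument (circles are Darboux curves, and the known circular sections fill the region), so the conclusion stands.
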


\begin{figure}[htbp]
\begin{center}
\includegraphics[scale=0.65]{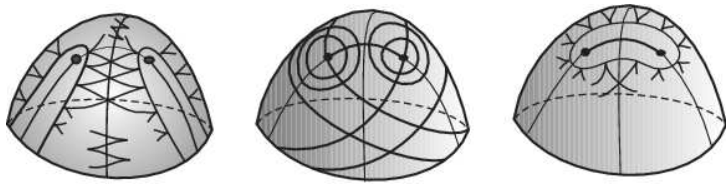}
   \caption { \label{fig:hp2f}  Darboux curves  on a connected component of a
hyperboloid of two sheets.}
   \end{center}
 \end{figure}

\begin{proof} The  analysis developed in the case of the ellipsoid
also works here. See proof of Proposition \ref{prop:de}.
\end{proof}

\begin{prop} \label{prop:dh1f} Consider an  hyperboloid of one sheet \; $\mathbb H_{a,b,c}
$ with   $a>b> 0> c.$ Let $\lambda\in (-\infty,c) \cup (b,\infty)$.
\begin{enumerate}
\item[i)] For $\lambda <c$ the Darboux curves  are  bounded and
contained in the cylindrical region $ \lambda \leq v\leq c$ and
the behavior is as in Fig. \ref{fig:hp1f}, upper left.

\item[ii)] For $b < \lambda < a$    the Darboux curves  are  unbounded and
contained in the cylindrical region $ b\leq u\leq \lambda$ {\rm (}outside the hyperbola  $E_x${\rm)} and
the behavior is as in Fig. \ref{fig:hp1f}, upper right.

 \item[iii)] For $\lambda=c$  and $\lambda=a$ the Darboux curves  are straight lines of the hyperboloid. For $\lambda=b$ the solutions are not real. See Fig.
\ref{fig:hp1f}, bottom left.

\item[iv)] For $a< \lambda $   all    Darboux curves are regular {\rm(} helices{\rm)} and goes to $\infty$ in both directions. See
Fig. \ref{fig:hp1f}, bottom right.

\end{enumerate}

\end{prop}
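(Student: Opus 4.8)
The plan is to reduce everything to the standard model equation $d\sigma_1^2 - d\sigma_2^2 = 0$ on a rectangle, exactly as in the proofs of Propositions \ref{prop:de}, \ref{prop:der} and \ref{prop:dh2f}, and then read off the qualitative picture from the geometry of the rectangle (bounded vs.\ unbounded sides, existence of a return map, etc.). First I would recall from the preceding propositions that on $\mathbb{Q}_{a,b,c}$ the Darboux curves satisfy
\[
\frac{(v-\lambda)}{H(v)}\,{v'}^2 - \frac{(u-\lambda)}{H(u)}\,{u'}^2 = 0,
\qquad H(x) = (x-a)(x-b)(x-c),
\]
with the ranges of $(u,v)$ for a one--sheeted hyperboloid being $u\in(b,a)$, $v<c$ (or the symmetric chart). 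Then, for each admissible $\lambda$, I would define $d\sigma_1 = \sqrt{(u-\lambda)/H(u)}\,du$ and $d\sigma_2 = \sqrt{(v-\lambda)/H(v)}\,dv$, check that the integrands are real and of one sign on the relevant $u$- and $v$-intervals (this is precisely what the restriction $\lambda\in(-\infty,c)\cup(b,\infty)$ guarantees, splitting into the two sub-cases), and conclude that in the $(\sigma_1,\sigma_2)$ chart the equation becomes $d\sigma_1^2 - d\sigma_2^2 = 0$, whose integral curves are the two families of straight lines of slope $\pm 1$.

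The case analysis then follows from computing whether the $\sigma_i$-intervals have finite or infinite length. For (i), $\lambda<c$: the curves live in the cylindrical band $\lambda\le v\le c$; here $\int_\lambda^c d\sigma_2$ is finite while the $u$-integral over $(b,a)$ is finite too, but the hyperboloid is non-compact in the $u$-direction away from the waist, so I would verify boundedness by noting $u$ stays in a compact sub-interval $[\,\lambda',c\,]$... more carefully, boundedness here comes from $v$ being trapped in $[\lambda,c]$ and the corresponding $u$-range being a genuine cylinder over a closed curve. For (ii), $b<\lambda<a$: the band is $b\le u\le\lambda$, and now the surface is unbounded (it escapes to infinity along the $v\to-\infty$ direction), giving unbounded Darboux curves outside the hyperbola $E_x$. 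For (iii), $\lambda=c$ and $\lambda=a$: one of the factors $(u-\lambda)$ or $(v-\lambda)$ coincides with a factor of $H$, the equation degenerates, and the solutions become the rectilinear generators of the hyperboloid (the two rulings); for $\lambda=b$ the factor $(u-b)$ or $(v-b)$ forces $(u-\lambda)/H(u)<0$ on the whole relevant interval, so there are no real solutions. For (iv), $\lambda>a$: then $(u-\lambda)<0$ for all $u\in(b,a)$ and $(v-\lambda)<0$ for all $v<c$, so both $d\sigma_i^2$ coefficients are positive with no constraint forcing confinement, every integral curve is a regular (non-cuspidal) helix and runs off to infinity in both directions; regularity follows because $\alpha\ne m\pi/2$ is never reached, i.e.\ the curve is nowhere tangent to a principal direction, so no cusps of the type in Proposition \ref{prop:dlocal} occur.

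The main obstacle I anticipate is not any single computation but the careful bookkeeping of signs and interval endpoints across all the cases, in particular making sure in (iii) that the degeneration really produces the two rulings of the one--sheeted hyperboloid (rather than, say, principal lines) — here I would argue that straight lines are always Darboux curves (their osculating sphere degenerates to the tangent plane, which is tangent to the surface), and that a doubly ruled quadric has exactly two families of lines, so the real integral curves of the degenerate equation must be precisely these. The qualitative statements ``recurrent/closed'' style behaviour is absent here (unlike Proposition \ref{prop:der}) precisely because in cases (i)--(ii) one of the two $\sigma$-intervals corresponds to a non-closed boundary of the band, so there is no Poincaré return map to speak of; I would make this explicit. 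Each figure-reference claim is then just the transport of the straight-line picture in the $(\sigma_1,\sigma_2)$ rectangle back through the (possibly singular) change of coordinates, with cuspidal edges appearing exactly on the principal-line boundaries $v=\lambda$ or $u=\lambda$, as in Proposition \ref{prop:de}.
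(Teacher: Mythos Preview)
Your approach is exactly the one the paper takes: the paper's entire proof is the single sentence ``Similar to the proof of Proposition \ref{prop:de},'' and your proposal spells out precisely that reduction to $d\sigma_1^2-d\sigma_2^2=0$ on a rectangle followed by the sign/finiteness bookkeeping for each range of $\lambda$. Your case analysis is more detailed than anything in the paper itself (including the slight hesitation in case (i), which resolves correctly once you remember that for the one-sheeted hyperboloid $u\in(b,a)$ is always bounded while $v<c$ is the unbounded direction), so there is nothing to add.
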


\begin{figure}[htbp]
\begin{center}
\includegraphics[scale=0.4]{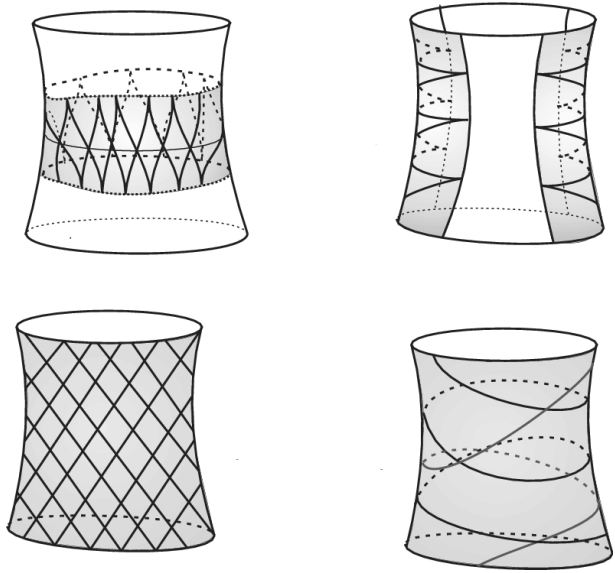}
   \caption { \label{fig:hp1f}  Darboux curves  on a
hyperboloid of one  sheet.}
   \end{center}
 \end{figure}

 \begin{proof} Similar to the proof of Proposition \ref{prop:de}.
 \end{proof}

  \begin{rema} The global behavior of geodesics in quadrics, in particular in the ellipsoid, was studied in \cite{gascoloquio}.\end{rema}

\vskip .5cm
\noindent{\bf Acknowledgements} The first author is grateful to the Faculty and staff of Institut de Mathé­ma­tiques de Bourgogne for the hospitality
during his stay in Dijon.  The  first author is fellow of  CNPq and performed  this
  work  under the project CNPq 473747/2006-5 and FUNAPE/UFG.
  The third   author was supported by the European Union grant, no. ICA1-CT-2002-
70017.
%, and is grateful to the faculty and staff of Institut de Mathématiques de Bourgogne for the hospitality
%during his stay in Dijon.

\bibliographystyle{plain}

\begin{thebibliography}{www}
\bibitem[Ba-La-Wa]{Ba-La-Wa} Adam Bartoszek, Rémi Langevin and Pawe\l\/ G. Walczak. {\it  Special canal surfaces of $ß^3$}. Manuscript.

%\bibitem[Be]{Be} M.~Berger, {\it Géométrie}, vol. 1, Nathan, Paris, 1990.

\bibitem[CSW]{csw}
G.~Cairns, R.~Sharpe, L.~Webb,
\newblock {\em Conformal invariants for curves and surfaces in three
dimensional space forms,}
\newblock { Rocky Mountain Jour. of Math.} {\bf 24}, (1994), pp.933--959.

%\bibitem[Cec]{Ce} Thomas E.~Cecil,\newblock  {\it Lie sphere geometry with applications to submanifolds,} Springer (1992).

\bibitem[Co]{Co} E. Cosserat, \newblock {\em Sur les courbes tracées sur une surface et dont la sphére osculatrice est tangente en chaque point à la surface},  Note Comptes Rendus Acad. Scien. Paris, {\bf 121} (1895), pp. 43-46.

\bibitem[Da1]{Da} G.~Darboux, \newblock  {\it Des courbes tracées sur une surface, dont la sphére osculatrice est tangente en chaque point à la surface,} Note Comptes Rendus Acad. Scien. Paris, tome LXXIII (1872), pp. 732--736.

   \bibitem[Da2]{Da2} G.~Darboux, \newblock  {\it
     Sur la forme des lignes de courbure dans la voisinage d'un ombilic}.
  Leçons sur la Théorie des Surfaces, Vol. III,
     Gauthiers-Villars, (1887).

\bibitem[Da3]{Da3} G.~Darboux, \newblock {\it Leçons sur la théorie générale
des surfaces}, Gauthier-Villars Paris (1887).

\bibitem[En]{En} A. Enneper, \newblock  {\em Bemerkungen über die Differentialgleichung einer Art von Curven auf Flächen}, Göttinger Nachrichten (1891), pp. 577-583.


 %\bibitem[Ga-L-S]{gls} {R. Garcia, J. Llibre and J. Sotomayor},
  %  \newblock{\em Lines of Principal Curvature on Canal Surfaces in $\mathbb{R}^3$},
   %  {Anais  Acad. Bras. Ciências},  {\bf 78}, (2006),  pp. {405--415}.

\bibitem[Ga-S1]{gasmedia} {  R. Garcia } and {  J.  Sotomayor},  \newblock{\em Structurally stable configurations of lines of mean curvature and umbilic points on surfaces immersed in } ${\mathbb R}\sp 3$.  Publ. Mat. {\bf 45}  (2001),   pp. 431--466.


 \bibitem[Ga-S2]{gsmg}
    {R. Garcia   and J. Sotomayor},
   \newblock{ \em Lines of  Mean
   Curvature on surfaces immersed in $ \mathbb{R}^3$},
   { Qualit. Theory of Dyn. Syst.},
   {\bf 5}, (2004),   pp. {137-183}.

\bibitem[Ga-S3]{gascoloquio} {  R. Garcia } and {  J.  Sotomayor},
 \newblock{\it Differential Equations of Classical Geometry,
a Qualitative Theory},  Publicações Matemáticas, 27$^{\text \small{o}}$ Colóquio Brasileiro de Matemática, IMPA, (2009).


\bibitem[Gu]{Gu}  A. Gullstrand,\newblock{\it Zur Kenntiss der Kreispunkte},
     Acta Math. {\bf   29}, (1905), pp. {59-100},



\bibitem[G-S]{gs1}   {  C. Gutierrez } and {   J. Sotomayor,}
 \newblock{\em Structural Stable Configurations of  Lines of Principal  Curvature},
  Asterisque {\bf  98-99}, (1982), pp. 185-215.

\bibitem[H-J]{H-J} U.~Hertrich-Jeromin, \newblock{\it Introduction to Möbius
Differential Geometry}, London Math. Soc. Lecture Notes,  vol {\bf 300}
Cambridge University Press (2003).

\bibitem[La-Oh]{La_Oh} R.~Langevin and J.~O'Hara,  \newblock{\it Conformal arc-length as $\frac{1}{2}$ dimensional length of the set of osculating circles}, to appear in Commentarii Helvetici.

\bibitem[L-S]{laso} R.~Langevin and G.~Solanes, \newblock{\it Conformal geometry of curves and the length of canals,} Preprint,  Université de Bourgogne (2005).

\bibitem[La-Wa]{La-Wa1} R.~Langevin and P.G.~Walczak,\newblock{\it  Conformal geometry of foliations} Geom. Dedicata,   {\bf 132}, (2008),  pp.~135--178.

 \bibitem[Mo1]{Mo5} Gaspard Monge, \newblock{\it Sur les lignes de courbure de l'ellipsoï de,} Journal de l'école polytechnique II$^{\grave eme}$ cahier, cours de Floréal an III p. 145.

   \bibitem[Mo2]{Mo6} Gaspard Monge, \newblock{  Géométrie Descriptive}, Leçons données aux Écoles Normales, l'an 3 de la République, Éditions Jacques Gabay, (1989).

 \bibitem[M-N]{mn} E.~Musso, L.~Nicoldi, \newblock{\it Willmore canal surfaces in
 Euclidean space}, Rend. Istit. Mat. Univ. Trieste {\bf 31}, (1999), pp.~177--202.

\bibitem[Pe]{Pe} A.~Pell, \newblock{\it ``D"-lines on Quadrics}, Trans. Amer. Math. Soc. vol {\bf 1}, (1900)
pp.~315--322.


\bibitem[Po]{Po} I. R. Porteous, \newblock{\it Geometric Differentiation}, Cambridge Univ. Press, (2001).


\bibitem[Ri]{Ri} M.~Ribaucour,  {\em Proprietés de courbes tracées sur les surfaces}, Note Comptes Rendus Acad. Scien. Paris, Tome LXXX, (1875), pp. 642-645.

 \bibitem[Ro]{roussarie}  R. Roussarie, \newblock{\em Modéles locaux de champs et de formes}, Astérisque, {\bf 30}, (1975).
  % p   181.

\bibitem[Sa1]{Sa} L. A.~Santaló, \newblock{\it Curvas extremales de la torsion total y curvas-D}, Publ. Inst. Mat. Univ. Nac. Litoral.  (1941), pp. 131--156.

\bibitem[Sa2]{sa2} L. A.~Santaló, \newblock{\it Curvas D sobre conos,} Select Works of L.A. Santaló, Springer Verlag, (2009), pp. 317-325. %Rosario

 

 \bibitem[Se]{Se} F.~Semin, {\it Darboux lines}, 
Rev. Fac. Sci. Univ. Istanbul (A) {\bf 17}, (1952). pp.~351--383. 

\bibitem[Sp]{spivak} {  M. Spivak, } \newblock{ A Comprehensive  Introduction to  Differential Geometry,} vol. III,   Publish of Perish  Berkeley, (1979).

\bibitem[St]{struik} {  D. Struik,} \newblock{  Lectures  on Classical  Differential
 Geometry}, Addison Wesley, (1950), Reprinted by Dover Collections, (1988).

\bibitem[Tho]{Tho} G.~Thomsen, \newblock{\it Über konforme Geometrie. II: Über Kreisscharen und Kurven in der Ebene
und über Kugelscharen und Kurven im Raum,} Abhandlungen Hamburg {\bf 4}, 117--147 (1925).

\bibitem[Tr]{tresse} {  M.~A.~Tresse,} \newblock{\em Sur les invariants
    différentiels d'une surface par rapport aux transformations
    conformes de l'espace, } Note Comptes Rendus Acad. Scien. Paris, {\bf 192}, (1892), pp.~948--950.


%\bibitem[Ve]{ve} {  E. Vessiot},\newblock{
 %    Leçons de Géometrie Supérieure},
  %  Librarie Scientifique J. Hermann, Paris, {1919}.


\end{thebibliography}

 \newpage

\author{\noindent  Ronaldo Garcia \\Instituto de Matemática e Estat\'{\i}stica,\\
Universidade Federal de Goiás,
\\CEP 74001-970, Caixa Postal
131,\\Goiânia, GO, BRAZIL \\
e-mail: ragarcia@mat.ufg.br\\
\\
\\ Rémi Langevin\\Institut de Mathématiques de Bourgogne,
\\U.F.R. Sciences et Techniques\\9, avenue Alain Savary\\
Université de Bourgogne,  B.P. 47870\\
 21078 - DIJON Cedex, FRANCE \\
e-mail: Remi.Langevin@u-bourgogne.fr\\
 \\
 Pawel Walczak \\ 
 Katedra Geometrii,  Wydzia\l \/  Matematyki\\
 Uniwersytet \L ódzki\\
 ul. Banacha 22, 90-238 \L ód\'z  POLAND\\
e-mail: pawelwal@math.uni.lodz.pl}

\end{document}